\documentclass[12pt]{amsart}
\usepackage{amsmath,amssymb,latexsym, amsthm, amscd, mathrsfs, stmaryrd, tikz, ulem}
\usepackage[linktocpage=true]{hyperref}
\usepackage{todonotes}
\usepackage{color}
\usepackage[all]{xy}

\hypersetup{colorlinks,linkcolor=red,urlcolor=cyan,citecolor=blue}

%%%%%%%Page stuff%%%%%%%%%%%%%
\setlength{\hoffset}{0pt}
\setlength{\voffset}{0pt}
\setlength{\topmargin}{0pt}
\setlength{\oddsidemargin}{0in}
\setlength{\evensidemargin}{0in}
\setlength{\textheight}{8.75in}
\setlength{\textwidth}{6.5in}
\pagestyle{headings}
%\renewcommand{\baselinestretch}{1.25}

%%%%%%%%theorem stuff%%%%%%%%%
\newtheorem{thm}{Theorem} [section]
\theoremstyle{definition}

%[section]
%[section]
%[section]
\newtheorem{rem}[thm]{Remark}%[section]
\theoremstyle{plain}
\newtheorem{prop}[thm]{Proposition}
\newtheorem{lem}[thm]{Lemma}

\numberwithin{equation}{section}

\newcommand{\al}{\alpha}

\newcommand{\Bl}{\mathcal B}
\newcommand{\C}{\mathbb C}
\newcommand{\D}{D(2|1;\zeta)}

\newcommand{\ep}{\epsilon}
\newcommand{\E}{\mathcal E}
\newcommand{\hf}{{\Small \frac12}}

\newcommand{\g}{\mathfrak{g}}
\newcommand{\gl}{\mathfrak{gl}}
\newcommand{\h}{\mathfrak{h}}
\newcommand{\hgt}{\mathrm{ht}}

\newcommand{\la}{\lambda}
\newcommand{\n}{\mathfrak n}
\newcommand{\N}{\mathbb N}
\newcommand{\one}{{\ov 1}}
\newcommand{\osp}{\mathfrak{osp}}
\newcommand{\om}{\omega}
\newcommand{\oo}{{\ov 0}}
\newcommand{\ov}{\overline}
\newcommand{\OO}{\mathcal O}

\newcommand{\pr}{\text{pr}}

\newcommand{\sll}{\mathfrak{sl}_2}

\newcommand{\wl}{\texttt{w}_\circ}
\newcommand{\Wt}{\text{WT}}
\newcommand{\Z}{\mathbb Z}

\newcommand{\LL}[1]{L_{{#1}}}
\newcommand{\M}[1]{M_{#1}}
\newcommand{\PP}[1]{P_{{#1}}}
\newcommand{\TT}[1]{T_{{#1}}}
%\newcommand{\Pj}[1]{{}^P{\underline{#1}}}

%color
\newcommand{\blue}[1]{{\color{blue}#1}}
\newcommand{\red}[1]{{\color{red}#1}}

\title[Character formulae in super category $\mathcal O$ for $G(3)$]{Character formulae in category $\mathcal O$ for exceptional Lie superalgebra $G(3)$ }

\author[Shun-Jen Cheng]{Shun-Jen Cheng}
\address{Institute of Mathematics, Academia Sinica, Taipei, Taiwan 10617} \email{chengsj@gate.sinica.edu.tw}

\author[Weiqiang Wang]{Weiqiang Wang}
\address{Department of Mathematics, University of Virginia, Charlottesville, VA 22904} \email{ww9c@virginia.edu}

\keywords{Exceptional Lie superalgebras,  tilting modules, projective modules, Verma flags.}
\subjclass[2010]{Primary 17B10}

\begin{document}

\maketitle

\begin{abstract}
We classify the blocks, compute the Verma flags of tilting and projective modules in the BGG category $\mathcal O$ for the exceptional Lie superalgebra $G(3)$. The projective injective modules in $\mathcal O$ are classified. We also compute the Jordan-H\"older multiplicities of the Verma modules in $\mathcal O$.
\end{abstract}

%\vspace{.3cm}
\maketitle

\setcounter{tocdepth}{1}
\tableofcontents

 %%%%
 %%%%
  \section{Introduction}

  \subsection{}

Among all the complex simple Lie superalgebras, there are 3 exceptional ones: $\D$ (where $\zeta\not=0,-1$ is a complex parameter), $G(3)$ and $F(3|1)$; cf. \cite{FK76} and \cite{CW12, Mu12}. While there are now complete solutions to the irreducible character problem in the BGG categories for the infinite series of basic Lie superalgebras  (see \cite{CLW11, CLW15, CFLW,  BLW17, BW13, Bao17}),  the BGG category of the exceptional Lie superalgebras were not investigated until very recently. In \cite{CW17} we initiated  the study of character formulae in the BGG category for exceptional Lie superalgebras by determining completely the Verma flags of tilting and projective modules in the BGG category for $\D$.
  \subsection{}
In this sequel to \cite{CW17}, we focus on the BGG category $\OO$ of modules of integral weights for  $G(3)$, the exceptional Lie superalgebra of dimension 31. The even subalgebra of $G(3)$ is $\g_\oo = G_2 \oplus \sll$ while its odd part is the tensor product of the 7-dimensional $G_2$-module with the 2-dimensional natural representation of $\sll$.
We classify the blocks, describe the Verma flags of tilting and projective modules, and classify the projective tilting modules in $\OO$. Then we compute the Jordan-H\"older (i.e., composition) multiplicities of Verma modules in $\OO$.

  \subsection{}

Our first basic result is a classification of the blocks in the BGG category $\OO$ for $G(3)$. The blocks in $\OO$ are divided into typical blocks (in which, roughly speaking, the super phenomena do not occur) and atypical blocks. The typical blocks were completely described by Gorelik's work \cite{Gor02a, Gor02b}. Indeed the typical blocks are controlled by the Weyl group $W$ of $G(3)$, and Gorelik shows that they are equivalent to some corresponding blocks of the even subalgebra $\g_\oo = G_2 \oplus \sll$.

The atypicality of a weight cannot be easily read off using the conventional notation of weights for $G(3)$. To remedy this, we introduce the notion of {\it symbols}, which are in bijection with weights; a symbol involves 3 coordinates on the $G_2$-part. The atypicality is readily read off from the symbol of a weight, and moreover, the action of the Weyl group $W_2$ of $G_2$ on the symbols is transparent too, as it is identified with (signed) permutations of $S_3$.

We classify the atypical blocks to be $\Bl_k$, parametrized by nonnegative integers $k$; see Theorem~\ref{thm:blocks}. Note that each atypical block $\Bl_k$ contains infinitely many simple objects, in contrast to typical blocks. The weight poset for each block $\Bl_k$ is acted upon by $W$, and we describe explicitly the set of anti-dominant weights $f_{k,n}$ in each poset (i.e., the $W$-transversals in each poset), which are parametrized by nonnegative integers $n$. The collection of $f_{k,n}^\sigma$, for all $k, n\in \N, \sigma \in W$, provides all the atypical integral weights.

  \subsection{}

The strategy of constructing the tilting modules in $\OO$ and obtaining their Verma flag multiplicities is as follows. We apply translation functors by tensoring with the 31-dimensional adjoint module. Our construction of tilting modules is inductive with respect to the Bruhat order of $W$, starting from the anti-dominant weights. A translation functor sends a tilting module to a direct sum of tilting modules. By choosing an initial tilting module properly, one hopes that after applying a translation functor one will get a candidate for a tilting module.

While the overall strategy is somewhat similar to the $\D$ case in \cite{CW17}, the two major steps for implementing it are much more difficult for $G(3)$ than for $\D$, as we explain below.
  \subsection{}

The first step is to produce candidates of Verma flags for tilting modules through translations, and the difficulty lies in the computational complexity for $G(3)$.
Our prsent work makes extensive use of Mathematica computations in both finding the suitable initial tilting modules and writing out the Verma flags of the resulting modules after translations. When  good choices of initial tilting modules are made, the patterns for Verma flags of (candidates of) tilting modules of highest weights ${f_{k,n}^\sigma}$  for varying $k,n$ and $\sigma$ become regular, and they can be summarized by concrete formulae via the Bruhat graph of $W_2$. Such formulae can be verified directly in principle.

Note $G_3$ has an even subalgebra $G_2 \oplus \sll$, and its Weyl group $W=A_1\times W_2$ has order 24. In contrast to $\D$, the even subalgebra $\D_\oo\cong\sll \oplus \sll \oplus \sll$ is a direct sum of the rank one simple Lie algebra, the Weyl group is $A_1\times A_1 \times A_1$, and hence the typical block structure is extremely simple. This makes it possible to compute by hand all the character formulae for $\D$, as we have done in \cite{CW17}. The passage from working with $A_1$ of rank one for $\D$ to $W_2$ of rank 2 for $G(3)$ increases the computational complexity dramatically.

\vspace{2mm}
The second main step is to establish the indecomposability of the candidate tilting modules, and this requires some new conceptual ideas  (note the lengths of Verma filtrations could be as long as 60). By construction a module obtained by applying a translation functor to an initial tilting module has a Verma flag such that the maximum weight among all the highest weights of the Verma modules in that flag is the desired highest weight of the tilting module in question.  Indecomposability would imply that the resulting module is indeed the desired tilting module.

Now Soergel duality relates the Verma flag multiplicity in a tilting module to the Jordan-H\"older multiplicity in a Verma module, and  for super BGG categories this was established by Brundan \cite{Br04}. Recall that, in the setting of $\D$ in \cite{CW17}, Soergel duality  when combined with some singular vector formulae therein is more or less sufficient to establish the desired indecomposability of the candidates for tilting modules, largely thanks to the simple nature of the even subalgebra of $\D$ and its Weyl group.

In our current $G(3)$ setting, the singular vector formulae associated to odd reflections are unknown, though a singular vector formula associated to even reflections has recently been established by Sale \cite{Sa17}. But even if all the singular vector formulae were found, for our purpose we would need to know further if various compositions of Verma module homomorphisms associated to even/odd reflections are nonzero, which is totally unclear due to the existence of zero divisors in the enveloping algebra of $G(3)$.

To overcome this difficulty we make use of the super Jantzen sum formula established by Gorelik \cite{Gor04}; also see Musson \cite{Mu12}. We derive from the super Jantzen sum formula a criterion (see Proposition~\ref{prop:flags}) for the existence of some composition factor in a Verma module, or equivalently by Soergel duality, for the appearance of some Verma module in a Verma flag of a tilting module. This criterion is valid for any basic Lie superalgebra and of independent interest, and it becomes extremely effective in our $G(3)$ setting because our candidates for tilting modules (with one exception) turn out to have 2 or 3 layers of Verma flags. (Here a layer means a $W$-orbit of highest weights.) Proposition~\ref{prop:flags} also plays a useful role in the classification of atypical blocks.

  \subsection{}

Our formulae for Verma flags of  tilting modules are rather uniform and compact (in contrast to the many separate cases for $\D$), and they are described via the Bruhat graph of $W_2$; see Theorems~\ref{thm:k=1+sigma}, \ref{thm:0:k=2+sigma}, and \ref{thm:0:k=1:sigma}. The formulae for tilting modules whose highest weights are singular or nearly singular  behave a little differently. The maximal multiplicity of a given Verma module in the Verma filtrations of tilting modules are shown to be 3, and this occurs only in a couple of tilting modules in the principal block $\Bl_0$.  The maximal length of Verma filtrations of tilting modules in $\OO$ turns out to be 60.

The principal block $\Bl_0$ requires some extra care; see Theorems~\ref{thm:k=0:sigma} and \ref{thm:0:k=0:sigma}. In spite of considerable efforts, %e.g., including the use of translation functors obtained by tensoring with modules other than the adjoint module,
there is still one particular tilting module in $\Bl_0$, whose Verma flag structure remains unknown to us; see Remark~\ref{rem:T20}. (We note a remarkable analogy with $\D$: there is a particular tilting module $T_{1,-1,-1}$ in the principal block of $\D$ \cite[Theorem~3.5]{CW17}, whose Verma flag structure was very difficult to pin down. The solution therein is based on precise information of zero-weight subspaces of some relevant modules.)
  \subsection{}

Via Soergel duality and BGG reciprocity, we convert the formulae for Verma flags of tilting modules into  formulae for Verma flags of projective modules. Moreover, we classify the projective tilting modules (= projective injective modules) in each atypical block $\Bl_k$; we show that they are exactly the tilting modules parametrized by the dominant integral weights. This is carried out in Section~\ref{sec:proj}.

By using BGG reciprocity we then convert in Section~\ref{sec:comp} our formulae for projective modules to obtain Jordan-H\"older multiplicity for Verma modules in $\Bl_k$, for $k\ge 1$.
  \subsection{}

The atypical blocks in the category of {\it finite-dimensional} $G(3)$-modules were classified and  the characters of projective modules in these atypical blocks by Germoni \cite{Ger00}; see also \cite{Ma14}. The character formulae of projective modules and simple modules in a parabolic BGG category for $G(3)$ were obtained in \cite{SZ16}.
  \subsection{}

This paper is organized as follows.
In Section~\ref{sec:basic}, we recall the super Jantzen sum formula. Then  we formulate in Proposition~\ref{prop:flags} a criterion
%for the existence of some composition factor in a Verma module, or equivalently by Soergel duality,
for the existence of some Verma module in a flag of a tilting module.

The notion of symbols is introduced and a weight-symbol correspondence is formulated in Section~\ref{sec:blocks}. We classify the atypical blocks in $\OO$, and also describe the weight poset for each atypical block $\Bl_k$, for $k\in \N$.

In Section~\ref{sec:charT}, we obtain the formulae for Verma flags of roughly half of the tilting modules in $\OO$, i.e., those with highest weights ${f_{k,n}^\sigma}$, for $\sigma \in W_2$.
We then obtain in Section~\ref{sec:charT0} the formulae for Verma flags of the remaining half of the tilting modules in $\OO$, i.e., those with highest weights ${f_{k,n}^\sigma}$, for $\sigma \in W \backslash W_2$.  Some cases in the blocks $\Bl_0$ and $\Bl_1$ need to be addressed separately.

We convert in Section~\ref{sec:proj} the formulae for Verma flags of  tilting modules to formulae for Verma flags of projective modules in $\OO$. We then classify the projective tilting modules in all blocks $\Bl_k$.
In Section~\ref{sec:comp}, we compute via BGG reciprocity the Jordan-H\"older multiplicities of Verma modules in $\Bl_k$, for $k\ge 1$.

 \vspace{.4cm}
 {\bf Acknowledgment.}
We acknowledge an extensive use of Mathematica, and our Mathematica code can be made available upon request. The first author is partially supported by a MoST and an Academia Sinica Investigator grant, while the second author is partially supported by an NSF grant DMS-1702254. We thank University of Virginia and Academia Sinica for hospitality and support.

 %%%%
 %%%%
\section{Conditions for nonzero Verma flag multiplicities in tilting modules}
    \label{sec:basic}

In this section, we work with an arbitrary basic Lie superalgebra $\g$. We give sufficient conditions for certain nonzero Jordan-H\"older multiplicity in a Verma $\g$-module, or equivalently, for certain nonzero Verma flag multiplicity in a tilting module.

Let $\Phi^+=\Phi^+_{\bar 0}\cup\Phi^+_{\bar 1}$ be a set of positive roots for a basic Lie superalgebra $\g$ (i.e., the Lie superalgebras of types $\gl, \osp, D(2|1;\zeta), F(3|1), G(3)$); cf. \cite{CW12, Mu12}. Then $\g$ admits a non-degenerate supersymmetric even bilinear form $(\cdot, \cdot)$, and $\Phi^+$ induces a triangular decomposition $\g =\n^- \oplus \h \oplus \n^+$. For $\alpha \in \Phi_\oo$, we denote by $\alpha^\vee \in \h$ the corresponding coroot so that
\[
\langle  \la,\alpha^\vee\rangle = 2(\la, \alpha)/(\alpha, \alpha),
\quad \forall \la\in X.
\]
The reflection $s_\alpha$ on $\h^*$, for $\alpha \in \Phi_\oo$,  is defined as usual by letting $s_\alpha (\la) =\la - \langle  \la,\alpha^\vee\rangle \alpha$.

Let $\OO$ be the BGG category which consists of $\g$-modules of integral weights that are locally finite with respect to $\mathfrak b=\h \oplus \n^+$. Denote by $M_\la$ the Verma module of highest weight $\la-\rho$ and denote by $\LL{\la}$ the unique simple quotient of $M_\la$, where $\rho$ is the Weyl vector associated to $\Phi^+$.
Denote by $\TT{\la}$ the tilting module of highest weight $\la-\rho$, which by definition is an indecomposable $\g$-module admitting a dual Verma flag and a Verma flag with highest term $\M{\la}$; denote by $(\TT{\la}:\M{\mu})$ the multiplicity of $\M{\mu}$ in a Verma flag of $\TT{\la}$. It is known that the projective cover $\PP{\la}$ of $\LL{\la}$ in $\OO$ also has a Verma flag, and we denote similarly by $(\PP{\la}: M_\mu)$ the multiplicity of $M_\mu$ in a Verma flag of $\PP{\la}$.
The BGG reciprocity and Soergel duality in $\OO$ are given as follows (cf. \cite{Br04}):
\begin{equation}  \label{tiltingD}
(\TT{-\la} : \M{-\mu})
=(\PP{\la}: M_\mu)
=[\M{\mu}: \LL{\la}], \qquad \text{ for }\la, \mu \in X.
\end{equation}

Define
\begin{align*}
\Phi^+_{\bar 1, \otimes}
 &=\{\alpha\in\Phi^+_{\bar 1}|(\alpha,\alpha)=0\},\qquad
\Phi^+_{\bar 1,\tikz\draw[fill] (0,0) circle (.4ex);}=\Phi^+_{\bar 1}\setminus\Phi^+_{\bar 1,\otimes},
\\
\Phi^+_{\bar 0,\tikz\draw (0,0) circle (.4ex);}
 &=\{\alpha\in\Phi^+_{\bar 0}|\alpha\not=2\beta,\beta\in\Phi^+_{\bar 1,\tikz\draw[fill] (0,0) circle (.4ex);}\}.
\end{align*}

The Jantzen sum formula has been generalized to basic Lie superalgebras by Gorelik and Musson as follows; see \cite{Gor04} and \cite[(10.3)]{Mu12}. Let $\N$ denote the set of nonnegative integers.

\begin{prop} [Super Jantzen sum formula]
 \label{prop:JSF}
There exists a finite filtration of $\g$-modules for the Verma module $M_\la$ (for $\la \in X$),
$%\begin{align*}
M_\la\supset M^1_\la\supset M^2_\la\supset \cdots,
$ %\end{align*}
such that
\begin{align}
  \label{jantzen:sum}
\begin{split}
\sum_{i>0} &{\rm ch} M^i_\la
\\
 = & \sum_{\alpha\in \Phi^+_{\bar 0,\tikz\draw (0,0) circle (.3ex);},\langle\la,\alpha^\vee\rangle\in\N\setminus\{0\}} {\rm ch}M_{s_\alpha\la}
 + \sum_{\beta\in \Phi^+_{\bar 0}\setminus\Phi^+_{\bar 0,\tikz\draw (0,0) circle (.3ex);},\langle\la,\beta^\vee\rangle\in\hf+\N} {\rm ch}M_{s_\beta\la}
+ \sum_{\gamma\in \Phi^+_{\bar 1,\otimes},(\la,\gamma)=0} \frac{{\rm ch}M_{\la-\gamma}}{1+e^{-\gamma}}.
\end{split}
\end{align}
\end{prop}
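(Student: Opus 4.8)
This is the super analogue of Jantzen's sum formula, and the plan is to run Jantzen's deformation argument, feeding in at the end the Shapovalov (contravariant form) determinant for basic Lie superalgebras in place of the Kac--Kazhdan determinant. First I would set up a one-parameter deformation: let $R=\C[t]_{(t)}$, a discrete valuation ring with residue field $\C$ and valuation $\mathrm{val}$, and pick $\nu\in\h^*$ generic in the sense that $\langle\nu,\alpha^\vee\rangle\neq 0$ for all $\alpha\in\Phi^+_{\bar 0}$ and $(\nu,\gamma)\neq 0$ for all $\gamma\in\Phi^+_{\bar 1,\otimes}$ (possible since only finitely many hyperplanes intervene). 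Over $U(\g)\otimes_\C R$ form the deformed Verma module $\widetilde M=M_{\la+t\nu}$, free of rank one over $U(\n^-)\otimes R$ on a highest weight vector $v$; it carries a unique even $R$-bilinear form $\langle\cdot,\cdot\rangle_t$ with $\langle v,v\rangle_t=1$, contravariant with respect to the principal anti-automorphism $\tau$ of $\g$, whose reduction modulo $t$ is the Shapovalov form of $M_\la$.

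Next I would define the Jantzen filtration: put $\widetilde M^i=\{w\in\widetilde M\mid \langle w,\widetilde M\rangle_t\subseteq t^iR\}$ and let $M^i_\la$ be the image of $\widetilde M^i$ under $\widetilde M\twoheadrightarrow \widetilde M/t\widetilde M=M_\la$. Contravariance makes each $\widetilde M^i$ a $\g\otimes R$-submodule, hence each $M^i_\la$ a $\g$-submodule, with $M^0_\la=M_\la$ and $M^1_\la$ the radical of the Shapovalov form. Then, working one weight space at a time, restrict $\langle\cdot,\cdot\rangle_t$ to the free $R$-module $V_\eta=\widetilde M_{\la-\rho-\eta}$ and let $D_\eta(t)\in R$ be its Gram determinant in a homogeneous $U(\n^-)$-monomial basis; by genericity of $\nu$, $D_\eta(t)\neq 0$. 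The standard elementary-divisor computation (Jantzen's lemma) gives $\sum_{i\geq 1}\dim_\C (M^i_\la)_{\la-\rho-\eta}=\mathrm{val}\, D_\eta(t)$, hence
\[
\sum_{i\geq 1}\mathrm{ch}\, M^i_\la \;=\; \sum_{\eta}\bigl(\mathrm{val}\, D_\eta(t)\bigr)\,e^{\la-\rho-\eta}.
\]

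It then remains to insert the super Shapovalov determinant (Kac; Gorelik \cite{Gor04}; Musson \cite[Ch.~10]{Mu12}): up to a nonzero scalar, $D_\eta(t)$ is a product of linear factors $\langle\la+t\nu,\alpha^\vee\rangle-n$ over $\alpha\in\Phi^+_{\bar 0,\circ}$ and $n\geq 1$; factors $\langle\la+t\nu,\delta^\vee\rangle-n$ over $\delta$ odd non-isotropic and $n\geq 1$ odd (equivalently, tied to $2\delta\in\Phi^+_{\bar 0}$ and $\langle\la,(2\delta)^\vee\rangle\in\hf+\N$); and factors $(\la+t\nu,\gamma)$ over $\gamma\in\Phi^+_{\bar 1,\otimes}$, each raised to the appropriate super Kostant partition number of $\eta$ shifted by the relevant root. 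Because $\nu$ is generic, such a factor contributes valuation $1$ exactly when its $t=0$ specialization vanishes, i.e. when $\langle\la,\alpha^\vee\rangle=n$, resp. $\langle\la,(2\delta)^\vee\rangle\in\hf+\N$, resp. $(\la,\gamma)=0$. Grouping the terms of $\sum_\eta(\mathrm{val}\,D_\eta)e^{\la-\rho-\eta}$ by root and applying the partition-function identity $\sum_\eta \mathcal P(\eta)e^{-\eta}=\prod_{\beta\in\Phi^+}(1-(-1)^{|\beta|}e^{-\beta})^{-1}$, together with its variant omitting a single odd isotropic root, converts the $\alpha$-contribution into $e^{\la-\rho-m\alpha}\prod_\beta(1-(-1)^{|\beta|}e^{-\beta})^{-1}=\mathrm{ch}\,M_{s_\alpha\la}$ (exactly Jantzen's original reduction), the $2\delta$-contribution into $\mathrm{ch}\,M_{s_{2\delta}\la}$, and the $\gamma$-contribution into $\dfrac{\mathrm{ch}\,M_{\la-\gamma}}{1+e^{-\gamma}}$, the denominator $1+e^{-\gamma}$ rather than $1-e^{-\gamma}$ reflecting the parity of $\gamma$ and the fact that $2\gamma\notin\Phi$. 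Assembling the three groups reproduces the right-hand side of \eqref{jantzen:sum}; since that expression has finite length in every weight space, the filtration is automatically finite.

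The main obstacle is the third step: everything else is the purely formal Jantzen machinery, which is insensitive to the super structure once one works with the contravariant form attached to the principal anti-automorphism. The actual content lies in having the correct super Shapovalov determinant with its precise multiplicities---which for the non-isotropic odd roots $\delta$ couple to the even roots $2\delta$ and restrict $n$ to odd values, and for the isotropic odd roots $\gamma$ produce the $(1+e^{-\gamma})^{-1}$ factor---and in carrying out the partition-function bookkeeping that turns the weighted valuation sums into honest Verma characters. This is precisely the work done by Gorelik \cite{Gor04} and recorded in Musson \cite[(10.3)]{Mu12}, to which we refer for the computation of the determinant and the final repackaging.
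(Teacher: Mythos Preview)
The paper does not prove this proposition at all: it is quoted as a known result due to Gorelik \cite{Gor04} and Musson \cite[(10.3)]{Mu12}, and the authors simply cite those references without reproducing any argument. Your proposal correctly outlines the standard Jantzen deformation argument (contravariant form over a DVR, Jantzen's valuation lemma, plus the super Shapovalov determinant), which is precisely the approach taken in the cited sources; so there is nothing to compare against in the paper itself, and your sketch is consistent with what those references do.
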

By \cite[Theorem~1.10]{Mu17}, the expression $\frac{{\rm ch}M_{\la-\gamma}}{1+e^{-\gamma}}$ above is the character of a genuine $\g$-module (which we shall denote by $M'_{\la-\gamma}$ and call it a {\it Musson} module).

We shall explore various implications of the Jantzen sum formula on the composition factors of Verma modules and then Verma flags of tilting modules.
The following proposition is of general interest, and it provides an essential tool in verifying various modules of Lie superalgebra $G(3)$ constructed in later sections are tilting modules.

\begin{prop}
  \label{prop:flags}
Retain the notations above for a basic Lie superalgebra $\g$.
Let $\la\in X$, $\alpha_i\in\Phi^+_{\bar 0}$, $1\le i\le k$, and $\beta,\gamma\in\Phi^+_{\bar 1}$. Let $w=s_{\alpha_k}\cdots s_{\alpha_2} s_{\alpha_1}\in W$.
\begin{itemize}
\item[(1)]
Suppose that $\langle\la,\alpha_1^\vee\rangle>0$. Then $(T_\la: M_{s_{\alpha_1} \la})>0$.

\item[(2)]
Suppose that
$\langle s_{\alpha_{i-1}}\cdots s_{\alpha_{1}}\la,\alpha_i^\vee\rangle > 0$,
%\item[(b)] $\langle s_{\alpha_{i-1}}\cdots s_{\alpha_{1}}\la,\alpha_i^\vee\rangle\in\red{\hf \N}$, if $\alpha_i\in \red{2 \Phi^+_{\bar 1,\tikz\draw[fill] (0,0) circle (.4ex);} }$,
for all $i=1,\ldots,k$. Then $(T_\la:M_{w\la})>0$.

\item[(3)]
Suppose that $(\la,\beta)=0$. Then $(T_\la:M_{\la-\beta})>0$.

\item[(4)]
Suppose that $(\la,\beta)=0$ and $\langle s_{\alpha_{i-1}}\cdots s_{\alpha_{1}}(\la-\beta),\alpha_i^\vee\rangle> 0$
 for all $i=1,\ldots,k$.
Then $(T_\la:M_{w(\la-\beta)})>0$.

\item[(5)]
Suppose that $(\la,\beta)=(\la-\beta,\gamma)=0$
and $\hgt(\beta)<\hgt(\gamma)$.  Then $(T_\la:M_{\la-\beta-\gamma})>0$.

\item[(6)]
Suppose that $(\la,\beta)=(\la-\beta,\gamma)=0$,
$\hgt(\beta)<\hgt(\gamma)$, $\langle s_{\alpha_{i-1}}\cdots s_{\alpha_{1}}(\la-\beta-\gamma),\alpha_i^\vee\rangle > 0$, for all $i=1,\ldots,k$.
Then $(T_\la:M_{w(\la-\beta-\gamma)})>0$.
\end{itemize}
\end{prop}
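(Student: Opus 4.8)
The plan is to pass, via BGG reciprocity and Soergel duality \eqref{tiltingD}, from $(\TT\la:\M\mu)>0$ to the equivalent statement $[\M{-\mu}:\LL{-\la}]>0$ about composition factors of a Verma module, and then to extract this from the super Jantzen sum formula \eqref{jantzen:sum}. Writing $\nu=-\la$ and using that reflections and the form $(\cdot,\cdot)$ are compatible with $\la\mapsto-\la$, the six cases become: (1) $\langle\nu,\alpha_1^\vee\rangle<0\Rightarrow[\M{s_{\alpha_1}\nu}:\LL\nu]>0$; (3) $(\nu,\beta)=0\Rightarrow[\M{\nu+\beta}:\LL\nu]>0$; (5) $(\nu,\beta)=(\nu+\beta,\gamma)=0$ and $\hgt(\beta)<\hgt(\gamma)$ imply $[\M{\nu+\beta+\gamma}:\LL\nu]>0$; and (2), (4), (6) are the iterated versions in which one further applies even reflections $s_{\alpha_i}$ with $\langle\,\cdot\,,\alpha_i^\vee\rangle<0$.

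The lemma I would isolate first is an \emph{extraction principle}. Fix a weight $\mu$. Every summand on the right-hand side of \eqref{jantzen:sum} for $\M\mu$ is the character of a genuine $\g$-module: a Verma module in the first two sums and a Musson module $M'_{\mu-\gamma}$ in the third, by \cite[Theorem~1.10]{Mu17}. Therefore, expanding $\sum_{i>0}\mathrm{ch}\,M^i_\mu$ in the basis $\{\mathrm{ch}\,\LL\kappa\}$, all coefficients are non-negative; the coefficient of $\mathrm{ch}\,\LL\kappa$ equals $\sum_{i>0}[M^i_\mu:\LL\kappa]$; and, the Jantzen filtration being decreasing, this is positive exactly when $[M^1_\mu:\LL\kappa]>0$, which forces $[\M\mu:\LL\kappa]>0$. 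In particular, if a single summand $N$ on the right-hand side of \eqref{jantzen:sum} for $\M\mu$ satisfies $[N:\LL\kappa]>0$, then $[\M\mu:\LL\kappa]>0$ --- the other summands can only add non-negatively to the coefficient of $\mathrm{ch}\,\LL\kappa$, so no cancellation occurs there.

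Granting this, the single-step cases should be short. For (1), apply \eqref{jantzen:sum} to $\M{s_{\alpha_1}\nu}$: since $\langle s_{\alpha_1}\nu,\alpha_1^\vee\rangle=-\langle\nu,\alpha_1^\vee\rangle$ is a positive integer, the root $\alpha_1$ contributes the summand $\mathrm{ch}\,\M\nu$, and $[\M\nu:\LL\nu]=1$. For (3), isotropy of $\beta$ gives $(\nu+\beta,\beta)=(\nu,\beta)+(\beta,\beta)=0$, so \eqref{jantzen:sum} applied to $\M{\nu+\beta}$ contributes the summand $M'_\nu$, with character $\mathrm{ch}\,\M\nu/(1+e^{-\beta})$, a highest weight module whose highest weight $\nu-\rho$ has multiplicity one, so $[M'_\nu:\LL\nu]=1$. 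Running the same extraction on $\M{s_\alpha\mu}$, where the root $\alpha$ contributes $\mathrm{ch}\,\M\mu$ whenever $\langle\mu,\alpha^\vee\rangle<0$, yields a \emph{reflection lemma}: $[\M\mu:\LL\kappa]>0$ and $\langle\mu,\alpha^\vee\rangle<0$ imply $[\M{s_\alpha\mu}:\LL\kappa]>0$. Iterating it along $\mu_i=s_{\alpha_i}\mu_{i-1}$ then lifts (1) to (2) and (3) to (4).

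Case (5) is where $\hgt(\beta)<\hgt(\gamma)$ enters, and I expect it to be the delicate point. Apply \eqref{jantzen:sum} to $\M{\nu+\beta+\gamma}$: isotropy of $\gamma$ gives $(\nu+\beta+\gamma,\gamma)=(\nu+\beta,\gamma)+(\gamma,\gamma)=0$, so the Musson summand $M'_{\nu+\beta}$ occurs, with $\mathrm{ch}\,M'_{\nu+\beta}=\mathrm{ch}\,\M{\nu+\beta}/(1+e^{-\gamma})=\sum_{k\ge0}(-1)^k\mathrm{ch}\,\M{\nu+\beta-k\gamma}$. The coefficient of $\mathrm{ch}\,\LL\nu$ in this is $\sum_{k\ge0}(-1)^k[\M{\nu+\beta-k\gamma}:\LL\nu]$, and for $k\ge1$ this multiplicity is $0$: $[\M{\nu+\beta-k\gamma}:\LL\nu]\ne0$ would require $\nu-\rho\le\nu+\beta-k\gamma-\rho$, i.e.\ $\beta-k\gamma$ a sum of positive roots, contradicting $\hgt(\beta-k\gamma)\le\hgt(\beta)-\hgt(\gamma)<0$. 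Hence $[M'_{\nu+\beta}:\LL\nu]=[\M{\nu+\beta}:\LL\nu]$, which is positive by case (3); the extraction principle then gives $[\M{\nu+\beta+\gamma}:\LL\nu]>0$, and the reflection lemma lifts this to (6). The remaining bookkeeping --- that the pairings $\langle\,\cdot\,,\alpha_i^\vee\rangle$ land in $\N\setminus\{0\}$, or in $\hf+\N$ using the second sum of \eqref{jantzen:sum} when $\alpha_i$ is twice an odd root --- is immediate from integrality; the genuine obstacle is simply the non-negativity statement underlying the extraction principle, which is exactly what makes the Jantzen sum formula usable here without any knowledge of singular vectors or of vanishing of composites of Verma homomorphisms.
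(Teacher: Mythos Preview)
Your proof is correct and follows essentially the same route as the paper's: reduce via Soergel duality \eqref{tiltingD} to a composition-factor statement, use that every summand on the right of the Jantzen sum formula is the character of a genuine module (so no cancellation occurs), iterate the even-reflection step for (2), (4), (6), and for (5) use the height inequality to kill the $k\ge 1$ terms in the alternating expansion of $\mathrm{ch}\,M'_{\nu+\beta}$. Your ``extraction principle'' and ``reflection lemma'' are exactly the implicit tools in the paper's argument; note only that both your argument and the paper's tacitly use $(\beta,\beta)=(\gamma,\gamma)=0$, so parts (3)--(6) as written really apply to isotropic odd roots.
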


\begin{proof}
Parts (1) and (3) can be viewed as special cases of (2) and (4), but we have chosen to formulate them separately.
Parts (1) and (3) follow immediately by the Soergel duality \eqref{tiltingD} and the Jantzen sum formula \eqref{jantzen:sum}.
We shall give the details of proofs for (4)--(5) below.

To prove (4), by the duality \eqref{tiltingD}, it suffices to show that $[M_{-w(\la-\beta)}:L_{-\la}]>0$. We set $w_i =s_{\alpha_i}s_{\alpha_{i-1}}\cdots s_{\alpha_1}$ (note $w_0=1$, $w_k=w$), and $\mu_i=-w_i(\la-\beta)$, for $0\le i \le k$. By the assumptions we have
\begin{align*}
\langle \mu_i,\alpha_i^\vee\rangle&=-\langle w_i(\la-\beta),\alpha_i^\vee\rangle
\\
&=-\langle s_{\alpha_i}s_{\alpha_{i-1}}\cdots s_{\alpha_1}(\la-\beta),\alpha_i^\vee\rangle \\
&=\langle  s_{\alpha_{i-1}}\cdots s_{\alpha_1}(\la-\beta),\alpha_i^\vee\rangle >  0.
\end{align*}
%This gives $\langle \mu,\alpha_k^\vee\rangle=-\langle w(\la-\beta),\alpha_k^\vee\rangle>0$, and h
It follows by \eqref{jantzen:sum} with $\la=\mu_i$ every composition factor of $M_{s_{\alpha_i}\mu_i} =M_{\mu_{i-1}}$ is a composition factor of $M_{\mu_i}$, for each $1\le i\le k$. Hence every composition factor of $M_{\mu_0} =M_{-\la+\beta}$ is a composition factor of $M_{\mu_k} =M_{-w(\la-\beta)}$. Since $(-\la+\beta,\beta)=(\la,\beta)=0$, $L_{-\la}$ is a composition factor of $M_{-\la+\beta}$ (by Part (3) and \eqref{tiltingD}). Thus, $L_{-\la}$ is a composition factor of $M_{-w(\la-\beta)}$, whence (4).

We now prove (5). It follows by Musson \cite[Theorem 1.10]{Mu17} that $\frac{{\rm ch}M_{-\la+\beta}}{1+e^{-\gamma}}$ is the character of the Musson module $M'_{-\la+\beta}$, a quotient module of $M_{-\la+\beta}$. By \eqref{jantzen:sum} we have
\begin{align}
  \label{J+M}
\sum_{i>0}{{\rm ch}M^i_{-\la+\beta+\gamma}}
= \frac{{\rm ch}M_{-\la+\beta}}{1+e^{-\gamma}}+\ldots
= {\rm ch}M'_{-\la+\beta} +\ldots
\end{align}
where $``+\ldots"$ denotes the character of some genuine $\g$-module.
It follows by definition that
\begin{align}
  \label{aux101}
{\rm ch}M'_{-\la+\beta}= {\rm ch}M_{-\la+\beta}- {\rm ch}M_{-\la+\beta-\gamma} + {\rm ch}M_{-\la+\beta-2\gamma} -\cdots.
\end{align}
Since $\hgt(\beta)<\hgt(\gamma)$, each Verma module $M_{-\la+\beta-k\gamma}$, for $k\ge 1$, has zero $(-\la)$-weight space.
Therefore by \eqref{aux101} %that the $(-\la)$-weight space of the Musson module $M'_{-\la+\beta}$ is the same as that of ${\rm ch}M_{-\la+\beta}$.
we have $[M'_{-\la+\beta}:L_{-\la}] =[M_{-\la+\beta}:L_{-\la}]>0$ (here ``$>0$" follows by Part (3) and \eqref{tiltingD}). Hence we obtain by \eqref{J+M} that $[M_{-\la+\beta+\gamma}:L_{-\la}]>0$, which in turn implies by \eqref{tiltingD} that  $\left(T_\la:M_{\la-\beta-\gamma}\right)>0$, as claimed.

Part~(2) can be formally obtained from (4) and its proof by setting $\beta=0$. %, and its similar and simpler proof will be skipped.
Part (6) is proved based on (5) in an entirely similar way as Part (4) (which was based on (3)). We skip these similar arguments.
\end{proof}

\begin{rem}
The condition $\hgt(\beta)<\hgt(\gamma)$ can be replaced by the weaker condition $\beta-\gamma \not \in \N \Phi^+$. For $\g=G(3)$ below, these two conditions are the same.
%Proposition~\ref{prop:flags}(5)-(6) can be further generalized to reach the next layer of odd reflections. Let us focus on (5). Suppose that $(\la,\beta)=0$, $(\la-\beta,\gamma)=0$, and $\beta-\gamma \not \in \N \Phi^+$. Suppose further $\theta\in\Phi^+_{\bar 1}$ satisfies $(\la-\beta-\gamma, \theta)=0$ and $\beta+\gamma-\theta \not \in \N \Phi^+$. Then $(T_\la:M_{\la-\beta-\gamma-\theta})>0$.
\end{rem}

 %%%%%%
 %%%%%%
 \section{Classification of blocks in the BGG category $\OO$ for $G(3)$}
   \label{sec:blocks}
 \subsection{Weights and roots for $G(3)$}

 From now on, we let $\g =\g_\oo \oplus \g_\one$ be the exceptional Lie superalgebra $G(3)$ over $\C$ with even subalgebra $\g_\oo =  G_2 \oplus \sll$.
We have $\g_\one \cong \underline{\bf 7} \otimes \underline{\bf 2}$ under the adjoint $\g_\oo$-action,
where $\underline{\bf 2}$ denotes the natural $\sll$-module and
$\underline{\bf 7}$ denotes the $7$-dimensional simple $G_2$-module.

To describe the roots for $G_2$ and $\g$, we introduce $\ep_1, \ep_2, \ep_3$ which satisfy the linear relation
 \[
 \ep_1 +\ep_2 +\ep_3 =0.
 \]
A bilinear form $(\cdot, \cdot)$ on
\[
X := \Z\delta \oplus \Z\ep_1 \oplus \Z\ep_2
\]
is given by
\[
(\delta, \delta) =-2, \quad (\delta, \ep_i) =0,
\quad
(\ep_i, \ep_i) =2, \quad (\ep_i, \ep_j) =-1, \quad \text{ for } 1\le i \neq j \le 3.
\]

We choose the simple system $\Pi =\{\alpha_1, \alpha_2, \alpha_3\}$ for $\g$, where
\[
\alpha_1 = \ep_2 -\ep_1,
\quad
\alpha_2=\ep_1,
\quad
\alpha_3 =  \delta +\ep_3.
\]
The Dynkin diagram associated to $\Pi$ is depicted as follows:
\vskip .5cm
\begin{center}
\begin{tikzpicture}
\node at (0,0) {$\bigcirc$};
\draw (0.2,0)--(1.155,0);
\draw (0.15,0.1)--(1.2,0.1);
\draw (0.15,-0.1)--(1.2,-0.1);
\node at (1.35,0) {$\bigcirc$};
\node at (0.75,0) {\Large $>$};
\draw (1.52,0)--(2.52,0);
\node at (2.7,0) {$\bigotimes$};
\node at (-0.3,-.5) {\tiny $\al_1=\ep_2-\ep_1$};
\node at (1.3,-.53) {\tiny $\al_2=\ep_1$};
\node at (2.9,-.5) {\tiny $\al_3=\delta+\ep_3$};
\end{tikzpicture}
\end{center}
%Note our simple system here is the same as in \cite{Ger00} except the labeling.

The root system of $\g$ is a union of even and odd roots: $\Phi =\Phi_\oo \cup \Phi_\one$.
The positive roots associated to $\Pi$ are $\Phi^+ =\Phi_\oo^+ \cup \Phi_\one^+$,
where
\[
\Phi_\oo^+ =\{2\delta, \ep_1, \ep_2, -\ep_3, \ep_2 -\ep_1, \ep_1 -\ep_3, \ep_2 -\ep_3 \},
\qquad
\Phi_\one^+ =\{\delta, \,   \delta \pm \ep_i\mid 1\le i \le 3\}.
\]
Then
\[
\Phi^+_{\bar 0,\tikz\draw (0,0) circle (.4ex);} =\{\ep_1, \ep_2, -\ep_3, \ep_2 -\ep_1, \ep_1 -\ep_3, \ep_2 -\ep_3 \},
\quad
\Phi^+_{\bar 1, \otimes} =\{\delta \pm \ep_i\mid 1\le i \le 3\},
\quad
\Phi^+_{\bar 1,\tikz\draw[fill] (0,0) circle (.4ex);}= \{\delta\}.
\]

The Weyl vector for $\g$ is defined to be $\rho = \rho_{\bar 0} -\rho_{\bar 1}$, where $\rho_{\bar 0} =\hf\sum_{\al \in \Phi_\oo^+} \al$ and $\rho_{\bar 1} =\hf \sum_{\beta \in \Phi_\one^+} \beta$. One computes that
\begin{equation}
  \label{rho}
 \rho =- \frac52\, \delta +2\ep_1 +3\ep_2, \qquad \rho_{\bar 1} =\frac{7}{2}\delta.
\end{equation}

Note that $\{\al_1, \al_2\}$ forms a simple system of $G_2$.
Denote by $\om_1$ and $\om_2$ the corresponding fundamental weights of $G_2$.
We have
\begin{align*}
\om_1 =\ep_1 + 2\ep_2, \qquad
\om_2 &=\ep_1 +\ep_2;
\\
\ep_1= 2\om_2 -\om_1,  \qquad
\ep_2 &= \om_1 -\om_2.
\end{align*}
Therefore, we can identify $X$ with the weight lattice of $\g$, and we have
\[
X =\Z\delta \oplus X_2,
\]
where
\[
X_2=\Z\om_1 \oplus \Z\om_2 =\Z\ep_1 \oplus \Z\ep_2
\]
is the weight lattice of $G_2$.

We shall denote by
\[
s_0=s_{2\delta}, \qquad s_1=s_{\alpha_1}, \qquad  s_2=s_{\alpha_2}.
\]
The Weyl group $W$ of $\g$ is
\[
W=A_1 \times W_2,
\]
where $W_2$ denotes the Weyl group of $G_2$ and $A_1 =\langle s_0 \rangle$ is the Weyl group of $\sll$.
Denote by $\wl$ the longest element in $W_2$. The Bruhat graph of the Weyl group $W_2$  is as follows (where $21$ is an abbreviation of $s_2s_1$ and so on):
\begin{center}
\begin{tikzpicture}
\node at (-.1,0) {\small $e$};
\draw[->] (0,0.2)--(1,.8);
\draw[->](0,-0.2)--(1,-.8);
\node at (1.25,1) {\tiny $1$};
\node at (1.25,-1) {\tiny $2$};
\draw[->] (1.5,1)--(2.7,1);
\draw[->] (1.5,-1)--(2.7,-1);
\node at (3,1) {\tiny$21$};
\node at (3,-1) {\tiny$12$};
\draw[->] (1.3,0.8)--(2.85,-.8);
\draw[->] (1.3,-0.8)--(2.85,.8);
\draw[->] (3.5,1)--(4.8,1);
\draw[[->] (3.5,-1)--(4.8,-1);
\node at (5.2,1) {\tiny$121$};
\node at (5.2,-1) {\tiny$212$};
\draw[->] (3.3,0.8)--(4.85,-.8);
\draw[->] (3.3,-0.8)--(4.85,.8);
\draw[->] (5.6,1)--(6.6,1);
\draw[->] (5.6,-1)--(6.6,-1);
\node at (7,1) {\tiny$2121$};
\node at (7,-1) {\tiny$1212$};
\draw[->] (5.3,0.8)--(7,-.8);
\draw[->] (5.3,-0.8)--(7,.8);
\draw[->] (7.4,1)--(8.4,1);
\draw[->] (7.4,-1)--(8.4,-1);
\node at (8.9,1) {\tiny$12121$};
\node at (8.9,-1) {\tiny$21212$};
\draw[->] (7.2,0.8)--(8.8,-.8);
\draw[->] (7.2,-0.8)--(8.8,.8);
\draw[->] (9.2,0.8)--(10.2,.2);
\draw[->] (9.2,-0.8)--(10.2,-.2);
\node at (10.8,0) {\tiny $212121=121212$};
\end{tikzpicture}
\end{center}

The positive roots of $\g$ of equal heights are grouped together and listed in an increasing order from height 1 to 8 in the following table:
%\begin{align*}
%& 2\delta,  \quad    \delta-\ep_3, \quad     \delta+\ep_2, \quad \{\delta+\ep_1, \ep_2-\ep_3\}, \quad \{\delta, \ep_1-\ep_3\}, \\ &\quad \{\delta -\ep_1, -\ep_3\}, \quad \{\delta -\ep_2, \ep_2\}, \quad \{\delta +\ep_3, \ep_1, \ep_2 -\ep_1\}.
%\end{align*}
%For example the heights of $\delta -\ep_1$ and $-\ep_3$ are $\hgt (\delta -\ep_1)= \hgt ( -\ep_3) =3$.

%%%%%%%%%%%%%%%%%%%%%%%%%%%%%%%%%%%%%%%%%%%%%%%%%%%%%%
\begin{table}[h]
\caption{Heights of positive roots of $\g$}
\label{table:spinH}
\begin{tabular}{| c | c | c | c |c | c | c | c |}
\hline
1&  2& 3 & 4 &5&  6& 7 &8
%%%%
\\
\hline
$\delta +\ep_3, \ep_1, \ep_2 -\ep_1$ &
$\delta -\ep_2, \ep_2$ &
$\delta -\ep_1, -\ep_3$ &
$\delta, \ep_1-\ep_3$ &
$\delta+\ep_1, \ep_2-\ep_3$ &
$\delta+\ep_2$ &
$\delta-\ep_3$ &
$2\delta$
\\
\hline
\end{tabular}
\newline
\smallskip
\end{table}

 \subsection{The symbols}

%  The notation $(d,a,b)$ in \cite{Ger00} means $d \delta + a \om_1 +b \om_2$.
%  The atypical dominant weights $\la_{k,l}$ {\it loc. cit.}  in blocks parameterized by $k\in \N$ are given in the forms $(d, a,b)$,

It is not obvious, by inspection, to decide whether a weight in the usual notation (as a linear combination of $\delta, \om_1, \om_2$ or a linear combination of $\delta, \ep_1, \ep_2$) is atypical and also whether two atypical weights are linked; cf. \cite{Ger00}. We shall introduce a different labeling convention for the weights, which we call {\it symbols}, that will make it transparent to observe whether or not a weight is atypical.

  A  $\rho$-shifted weight $\la=d \delta + a \om_1 +b \om_2 \in X+\rho$ satisfies $d \in \Z+\hf, a, b\in \Z$.
We define the scalars (or $\ep_i$-projections)
\[
x=\pr_{\ep_1} (\la), \quad
y =\pr_{\ep_2} (\la), \quad
z=\pr_{\ep_3} (\la)
\]
by writing $a \om_1 +b \om_2  =x\ep_1 + x' \ep_1^\perp = y \ep_2 + y' \ep_2^\perp = z \ep_3 + z' \ep_3^\perp$, where $\ep_i^\perp \in X_2$ is such that $(\ep_i, \ep_i^\perp) =0$.
 Actually we can choose  $\ep_1^\perp =\hf \om_1 =\hf \ep_1 +\ep_2$, $\ep_2^\perp= \ep_1 + \hf \ep_2$ and so on.
A direct computation shows $z=-x-y$.
%We refer to $x,y,z$ as the projections of $\la$ to $\ep_1, \ep_2$ and $\ep_3$, respectively, and write $\pr_{\ep_1} (\la) =x$, $\pr_{\ep_2} (\la) =y$, and  $\pr_{\ep_3} (\la)  =z$.
We call $[d|x, y, z]$ the {\it symbol} corresponding to $\la$, and denote $\la \sim [d~|~x, y, z]$.
Introduce the set of symbols
 \begin{align}
   \label{eq:sym}
 \mathfrak{Symb} =\Big\{ [d~|~ x,y,-x-y], \text{ for } d  \in \Z +\hf,\; x,y \in \hf\Z,\;  3 \big |2(y-x) \Big\}.
 \end{align}

 \begin{lem}
We have a bijection $\Psi: X+\rho \longrightarrow \mathfrak{Symb}$, which sends
$\la=d \delta + a \om_1 +b \om_2$ to its symbol $[d ~|~\pr_{\ep_1} (\la), \pr_{\ep_2} (\la), \pr_{\ep_3}(\la)]$.
\end{lem}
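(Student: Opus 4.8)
The plan is to verify that $\Psi$ is well-defined, injective, and surjective by unwinding the definitions of the $\ep_i$-projections and doing a linear-algebra computation in the rank-two lattice $X_2$. First I would check well-definedness: given $\la = d\delta + a\om_1 + b\om_2 \in X + \rho$, we have $d \in \Z + \hf$ and $a,b \in \Z$, so the $\delta$-coordinate of the symbol is automatically in $\Z + \hf$. Writing $a\om_1 + b\om_2 = x\ep_1 + x'\ep_1^\perp$ with $\ep_1^\perp = \hf\om_1$, and using $(\ep_1,\ep_1) = 2$, $(\ep_1, \ep_1^\perp) = 0$, one solves for $x = (a\om_1 + b\om_2, \ep_1)/2$; substituting $\om_1 = \ep_1 + 2\ep_2$, $\om_2 = \ep_1 + \ep_2$ and the Gram matrix $(\ep_i,\ep_i)=2$, $(\ep_i,\ep_j)=-1$ gives an explicit formula for $x$ (and likewise $y$, $z$) as half-integer linear combinations of $a,b$. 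I would then record $x = \pr_{\ep_1}(\la)$, $y = \pr_{\ep_2}(\la)$, $z = \pr_{\ep_3}(\la)$ explicitly, confirm $x + y + z = 0$ directly (this reflects $\ep_1 + \ep_2 + \ep_3 = 0$ together with $(\cdot,\cdot)$ restricted to $X_2$, or a short direct check), so that $z = -x - y$, and finally verify the congruence $3 \mid 2(y - x)$: from the explicit formulae, $2(y-x)$ should come out to an integer linear combination of $a$ and $b$ that is visibly divisible by $3$ (this is exactly the statement that $a\om_1 + b\om_2$ lies in the $G_2$ weight lattice rather than in $\frac13$ of it). Hence $\Psi(\la) \in \mathfrak{Symb}$.

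Next I would prove injectivity and surjectivity simultaneously by exhibiting the inverse map. The assignment $(a,b) \mapsto (x,y)$ (with the $d$-coordinate untouched) is a linear map from pairs of integers to pairs of half-integers; I would write down its $2\times 2$ matrix, compute the determinant, and invert it, obtaining $a$ and $b$ as linear combinations of $x$ and $y$. Injectivity of $\Psi$ is then immediate since this matrix is invertible over $\Q$. For surjectivity, given $[d \mid x, y, -x-y] \in \mathfrak{Symb}$, I would use the inverse formula to produce candidate $a, b$ and must check they are \emph{integers}: this is where the two defining constraints of $\mathfrak{Symb}$ — namely $x, y \in \hf\Z$ and $3 \mid 2(y-x)$ — are used. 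The constraint $x,y \in \hf\Z$ ensures $2x, 2y \in \Z$, and $3 \mid 2(y-x)$ ensures the relevant combination is divisible by $3$; together they force the denominators appearing in $a = a(x,y)$ and $b = b(x,y)$ to cancel, so $a, b \in \Z$. One then sets $\la = d\delta + a\om_1 + b\om_2$, checks $\la \in X + \rho$ (automatic from $d \in \Z+\hf$, $a,b \in \Z$), and verifies $\Psi(\la) = [d\mid x,y,-x-y]$ by re-running the projection computation.

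The main obstacle, such as it is, is purely bookkeeping: getting the Gram-matrix computation of the $\ep_i$-projections right and then tracking the exact arithmetic progression (the factor of $2$ and the factor of $3$) that makes the set $\mathfrak{Symb}$ in \eqref{eq:sym} the precise image of $X + \rho$. There is no conceptual difficulty — it is a statement that a certain integral lattice, expressed in the $\{\ep_1,\ep_2\}$-coordinates, is cut out by one congruence mod $3$ — but one must be careful that the normalization $\ep_1^\perp = \hf\om_1 = \hf\ep_1 + \ep_2$, $\ep_2^\perp = \ep_1 + \hf\ep_2$ used in the statement is consistent with $(\ep_i,\ep_i^\perp)=0$ under the given form, and that the "congruence $3 \mid 2(y-x)$'' is exactly (not merely sufficiently) the condition for integrality of $a,b$. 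I would present the two explicit linear formulae — $\Psi$ in one direction and its inverse in the other — and let the reader confirm they are mutually inverse; that makes the bijection transparent and simultaneously settles both injectivity and surjectivity.
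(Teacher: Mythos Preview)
Your proposal is correct and follows essentially the same approach as the paper: the paper's proof simply records the explicit formulae $\Psi(d\delta + a\om_1 + b\om_2) = [d \mid b/2,\, (3a+b)/2,\, -(3a+2b)/2]$ and $\Psi^{-1}([d \mid x,y,z]) = d\delta + \tfrac{2}{3}(y-x)\om_1 + 2x\om_2$, leaving the verification that these are mutually inverse and land in the correct sets to the reader. Your plan is precisely the computation that produces and checks these formulae, so the two arguments are the same in substance.
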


\begin{proof}
 The lemma follows by a direct computation; indeed, the map $\Psi$ and its inverse map $\Psi^{-1}$ are given by the following formulae:
 \begin{align}
   \label{symbol}
    \begin{split}
 \Psi ( d \delta + a \om_1 +b \om_2)  &= [d~|~ b/2, (3a+b)/2, -(3a+2b)/2],
  \\
\Psi^{-1}( [d~|~ x,y,z]) &=  d \delta + \frac23(y-x) \om_1+ 2x \om_2,
    \end{split}
 \end{align}
 where $d \in \Z +\hf, a, b \in \Z, x,y \in \hf\Z,\;  3 \big |2(y-x).$
 \end{proof}

The symbols of $\ep_i, \ep_i-\ep_j$ are given by
\begin{align*}
\ep_1 \sim [0 ~|~ 1,\;  -1/2,\;  -1/2], & \quad \ep_2 -\ep_1 \sim [0 ~|~{-3/2},\;  3/2,\;  0],
\\
\ep_2 \sim [0 ~|~  {-1/2},\;  1,\;  -1/2], & \quad \ep_1 -\ep_3 \sim [0 ~|~ 3/2,\;  0,\;  -3/2],
\\
\ep_3 \sim [0 ~|~ {-1/2},\;  -1/2, \;  1], & \quad \ep_2 -\ep_3 \sim [0 ~|~ 0,\;  3/2,\;  -3/2].
\end{align*}
Here are the symbols of some odd roots:
\begin{align*}
&\pm\delta +\ep_1 \sim [\pm 1 ~|~ 1, -1/2, -1/2],
\\
&\pm\delta +\ep_2 \sim   [\pm   1 ~|~ {-1/2}, 1, -1/2],
\\
& \pm\delta +\ep_3 \sim [\pm 1 ~|~ {-1/2}, -1/2, 1].
\end{align*}

A weight $\la \in X+\rho$ is {\it atypical} if $(\la, \beta)=0$ for some odd root $\beta =\delta \pm\ep_i$, where $1\le i \le 3$; we will also say its symbol is atypical.

%Recall $A_1 =\langle s_0 \rangle$ is the Weyl group of $\sll$.
The Weyl group $W=A_1 \times W_2$ acts on $X+\rho=(\hf+\Z) \delta \oplus X_2$, where $s_0 \in A_1$ acts by sign change on $\delta$ and $W_2$ acts on $X_2$ by permuting the 3 indices in $\{\pm \ep_1, \pm \ep_2, \pm \ep_3\}$  (possibly coupled with a simultaneous sign change). More precisely, the simple reflections associated to $\al_1, \al_2$ act as follows:
\begin{align}
 \label{simple ref}
 \begin{split}
s_1 & : \; \ep_1 \mapsto \ep_2, \quad  \ep_2 \mapsto \ep_1, \quad \ep_3 \mapsto \ep_3,
\\
s_2& : \; \ep_1 \mapsto -\ep_1,\quad  \ep_2 \mapsto -\ep_3, \quad \ep_3 \mapsto -\ep_2.
\end{split}
\end{align}
By the formulae \eqref{symbol} for the bijection $\Psi$ we can verify directly that if $\la \sim [d | x, y, z]$ and $w=(\sigma, w_2) \in A_1 \times W_2$, then $w(\la) \sim [\sigma(d)~ |~ w_2(x,y,z)]$, where $W_2$ acts on the hyperplane $x+y+z=0$
by permuting the 3 coordinates (coupled with a possible simultaneous sign change); more explicitly, we have
$s_1(x,y,z) =(y,x,z)$, $s_2(x,y,z)=(-x,-z,-y)$.

\begin{prop}
  \label{prop:symbolgood}
A symbol $[d ~|~ x, y, z]$ is atypical if and only if $d$ (or $-d$) is equal to $x, y$ or $z$. Moreover, the weight-symbol correspondence $\Psi$ commutes with the natural Weyl group actions.
\end{prop}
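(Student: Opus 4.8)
The plan is to verify both assertions by unwinding the definitions through the explicit formulae for $\Psi$ in \eqref{symbol}. For the atypicality criterion, recall that $\la \sim [d\,|\,x,y,z]$ is atypical by definition precisely when $(\la,\beta)=0$ for some $\beta = \delta\pm\ep_i$ with $1\le i\le 3$. So first I would compute $(\la, \delta\pm\ep_i)$ directly in terms of the coordinates of the symbol. Write $\la = d\delta + \mu$ with $\mu\in X_2$. Using the bilinear form from the paper, $(\delta,\delta) = -2$ and $(\delta,\ep_i)=0$, we get $(\la, \delta\pm\ep_i) = -2d \pm (\mu,\ep_i)$. The point is then to identify $(\mu,\ep_i)$ with the $\ep_i$-projection up to the normalization $(\ep_i,\ep_i)=2$: by the very definition of $\pr_{\ep_i}(\la)$ as the coefficient in $\mu = x\ep_1 + x'\ep_1^\perp$ (with $\ep_1^\perp\perp\ep_1$), we have $(\mu,\ep_1) = x(\ep_1,\ep_1) = 2x$, and similarly $(\mu,\ep_i) = 2(\text{$i$-th coordinate})$. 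Hence $(\la,\delta\pm\ep_i) = -2d \pm 2x$ (resp. $\pm 2y$, $\pm 2z$), and this vanishes iff $d = \pm x$ (resp. $\pm y$, $\pm z$). Assembling the three cases gives exactly the stated criterion: $[d\,|\,x,y,z]$ is atypical iff $d$ or $-d$ equals one of $x,y,z$.

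For the second assertion — that $\Psi$ intertwines the two Weyl group actions — the cleanest route is to reduce to generators. Since $W = A_1 \times W_2$ is generated by $s_0, s_1, s_2$, and since $\Psi$ is a bijection, it suffices to check $\Psi(w\la) = w\cdot\Psi(\la)$ for $w \in \{s_0, s_1, s_2\}$ and all $\la$; compatibility for arbitrary $w$ then follows because both sides are (anti)homomorphic in $w$. The $s_0$ case is immediate since $s_0$ acts on $X+\rho$ by $d\delta \mapsto -d\delta$ fixing the $X_2$-part, matching $[d\,|\,x,y,z]\mapsto[-d\,|\,x,y,z]$. For $s_1$ and $s_2$, I would use the description \eqref{simple ref} of how $s_1,s_2$ permute $\ep_1,\ep_2,\ep_3$ (with sign changes), together with the observation that the projections $\pr_{\ep_i}$ transform accordingly: since $s_1$ swaps $\ep_1 \leftrightarrow \ep_2$ and fixes $\ep_3$, it swaps the roles of the $\ep_1$- and $\ep_2$-projections, so $\pr_{\ep_1}(s_1\la) = \pr_{\ep_2}(\la)$, etc., giving $s_1(x,y,z) = (y,x,z)$; and since $s_2$ sends $\ep_1\mapsto -\ep_1$, $\ep_2\mapsto -\ep_3$, $\ep_3\mapsto -\ep_2$, one gets $s_2(x,y,z) = (-x,-z,-y)$. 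These are exactly the formulae asserted in the paragraph preceding the proposition. Alternatively — and perhaps more robustly — I can just substitute the explicit formula $\Psi(d\delta + a\om_1 + b\om_2) = [d\,|\,b/2,(3a+b)/2,-(3a+2b)/2]$ into the known action of $s_1,s_2$ on the basis $\om_1,\om_2$ (or equivalently on $\ep_1,\ep_2$) and check the identities coordinate by coordinate.

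The main obstacle, such as it is, is bookkeeping: one has to be careful that the "projection" $\pr_{\ep_i}$ is the coefficient in the orthogonal decomposition $\mu = (\text{multiple of }\ep_i) + (\text{something }\perp\ep_i)$ rather than a naive coordinate, and that the $W_2$-action on the $(x,y,z)$-plane genuinely carries the sign changes (so that it is the action permuting $\{\pm\ep_1,\pm\ep_2,\pm\ep_3\}$, not just $S_3$ acting on three symbols). Once the normalization $(\mu,\ep_i) = 2\,\pr_{\ep_i}(\la)$ is pinned down and the generator computation for $s_1,s_2$ is done, both statements drop out; in fact the second statement was essentially already verified in the discussion immediately before the proposition, so the proof can simply record that computation and combine it with the projection identity for atypicality. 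I expect the whole argument to be a few lines of direct verification.
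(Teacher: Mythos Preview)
Your proposal is correct and follows essentially the same approach as the paper: the paper's proof is a two-line deferral (``follows by definition of the symbols'' and ``a summary of the above discussions''), and what you have written is precisely the unpacking of those definitions and the generator-by-generator verification that the paper carries out in the paragraph immediately preceding the proposition. Your explicit computation $(\la,\delta\pm\ep_i)=-2d\pm 2\,\pr_{\ep_i}(\la)$ makes transparent what the paper leaves implicit in ``by definition,'' but the route is the same.
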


\begin{proof}
The first statement follows by definition of the symbols. The second statement is a summary of the above discussions.
\end{proof}

\subsection{Atypical integral weights}

We now classify all atypical weights in $X$ via symbols and then group them into blocks.

A $\rho$-shifted weight $\la \in X+\rho$ is {\it anti-dominant} if $\langle \la, \alpha^\vee \rangle \le 0$, for all $\alpha \in \Phi^+_\oo$; we will call the corresponding symbol $f_\la$ anti-dominant.

Introduce the following atypical anti-dominant symbols,
%\begin{align}
%f_{0,0} &=\big[ -1/2 ~\big |~ -1/2,\; -1/2,\; 1 \big]
%\\
%f_{0,1} &=\big[ -3/2 ~\big |~ 0,\; -3/2,\; 3/2 \big]
%\\
%f_{0,n} &=\big[-(2n +1)/2 ~\big |~ -(n-1)/2,\; -(n+2)/2,\; (2n+1)/2 \big], \quad n \ge 2.
%\end{align}
%
for $k,n\in\N$:
\begin{align*}
f_{k,n} &=\big[-(2n+1)/2 ~\big |~ -(2n+1)/2,\; (n-3k -1)/2,\;  (3k+n+2)/2 \big], \quad 0\le n \le k-1,
\\
f_{k,k} &=\big[-(2k +1)/2 ~\big |~ -(2k +1)/2,\; -(2k +1)/2,\; 2k +1 \big],
\\
f_{k,n} &=\big[-(2n+1)/2 ~\big |~  (n-3k-1)/2,\; -(2n+1)/2,\; (3k+n+2)/2 \big], \quad  k+1\le n \le 3k,
\\
f_{k,3k+1}   &=\big[-3(2k +1)/2 ~\big |~ 0,\; -3(2k +1)/2,\; 3(2k +1)/2 \big],
\\
f_{k,n}&=\big[ -(2n+1)/2 ~\big |~ -(n-3k-1)/2,\; -(3k+n+2)/2,\; (2n+1)/2 \big], \quad n\ge 3k+2.
\end{align*}
The corresponding weights $\Psi^{-1}(f_{k,n})$ in $X+\rho$ are as follows:
\begin{align*}
\Psi^{-1}(f_{k,n}) &=-\frac{2n+1}{2}\delta - (n+k+1)\ep_1-(2k+1)\ep_2, \quad 0\le n \le k-1,
\\
\Psi^{-1}(f_{k,k}) &=-\frac{2k+1}{2}\delta - (2k+1)\ep_1-(2k+1)\ep_2,
\\
\Psi^{-1}(f_{k,n}) &=-\frac{2n+1}{2}\delta -(2k+1)\ep_1 - (n+k+1)\ep_2, \quad  k+1\le n \le 3k,
\\
\Psi^{-1}(f_{k,3k+1})   &=-\frac{6k+3}{2}\delta - (2k+1)\ep_1-(4k+2)\ep_2,
\\
\Psi^{-1}(f_{k,n})&=-\frac{2n+1}{2}\delta - (n-k)\ep_1-(n+k+1)\ep_2, \quad n\ge 3k+2.
\end{align*}

Note $f_{k,k}, f_{k,3k+1}$ are singular, and all the others are regular.
Both the stabilizer $\langle s_{1} \rangle$ of $f_{k,k}$ and the stabilizer $\langle s_{2} \rangle$ of $f_{k,3k+1}$ have order 2.
For all $k\ge 0$ and $n\ge 0$, we let
$f_{k,n}^\sigma$ be the image of $f_{k,n}$ under the action of the Weyl group element $\sigma \in W$.

%%%%%%
\newpage

{\quad}
\vspace{2mm}

%%%%%%
\begin{table}[h]
\caption{Anti-dominant weights $f_{k,n}$, for $k=0,1,2,3$, and $n\le 8$.}
%\vspace{2mm}
\begin{center}
\begin{tikzpicture}%
\draw  (-1.5,1) --  (13.5,1);
\node [red] at (0,0) {\small $[-\hf|-\hf,-\hf,1]$};
\draw [magenta, thick] [->] (0,-0.4)--(0,-1.2);
\node [blue] at (0,-1.5) {\small $[-\frac{3}{2}|0,-\frac{3}{2},\frac{3}{2}]$};
\draw [cyan, thick] [->] (0,-1.9)--(0,-2.7);
\node at (0,-3) {\small $[-\frac{5}{2}|-\hf,-2,\frac{5}{2}]$};
\draw [cyan, thick] [->] (0,-3.4)--(0,-4.2);
\node at (0,-4.5) {\small $[-\frac{7}{2}|-1,-\frac{5}{2},\frac{7}{2}]$};
\draw [cyan, thick] [->] (0,-4.9)--(0,-5.7);
\node at (0,-6) {\small $[-\frac{9}{2}|-\frac{3}{2},-3,\frac{9}{2}]$};
\draw [cyan, thick] [->] (0,-6.4)--(0,-7.2);
\node at (0,-7.5) {\small $[-\frac{11}{2}|-2,-\frac{7}{2},\frac{11}{2}]$};
\draw [cyan, thick] [->] (0,-7.9)--(0,-8.7);
\node at (0,-9) {\small $[-\frac{13}{2}|-\frac{5}{2},-4,\frac{13}{2}]$};
\draw [cyan, thick] [->] (0,-9.4)--(0,-10.2);
\node at (0,-10.5) {\small $[-\frac{15}{2}|-3,-\frac{9}{2},\frac{15}{2}]$};
\draw [cyan, thick] [->] (0,-10.9)--(0,-11.7);
\node at (0,-12)  {\small $[-\frac{17}{2}|-\frac72,-5,\frac{17}{2}]$};
\draw [cyan, thick] [->] (0,-12.4)--(0,-13.2);
\node at (0,-13.5) {\small $\vdots$};
\node [below] at (0,-13.8) {$k=0$};
\node at (4,0) {\small $[-\hf|-\hf,-2,\frac{5}{2}]$};
\draw [thick, lightgray] [->] (4,-0.4)--(4,-1.2);
\node [red] at (4,-1.5) {\small $[-\frac{3}{2}|-\frac{3}{2},-\frac{3}{2},3]$};
\draw [magenta, thick] [->] (4,-1.9)--(4,-2.7);
\node at (4,-3) {\small $[-\frac{5}{2}|-1,-\frac{5}{2},\frac{7}{2}]$};
\draw [magenta, thick] [->] (4,-3.4)--(4,-4.2);
\node at (4,-4.5) {\small $[-\frac{7}{2}|-\hf,-\frac{7}{2},4]$};
\draw [magenta, thick] [->] (4,-4.9)--(4,-5.7);
\node [blue] at (4,-6) {\small $[-\frac{9}{2}|0,-\frac{9}{2},\frac{9}{2}]$};
\draw [cyan, thick] [->] (4,-6.4)--(4,-7.2);
\node at (4,-7.5) {\small $[-\frac{11}{2}|-\frac{1}{2},-5,\frac{11}{2}]$};
\draw [cyan, thick] [->] (4,-7.9)--(4,-8.7);
\node at (4,-9) {\small $[-\frac{13}{2}|-1,-\frac{11}{2},\frac{13}{2}]$};
\draw [cyan, thick] [->] (4,-9.4)--(4,-10.2);
\node at (4,-10.5) {\small $[-\frac{15}{2}|-\frac{3}{2},-6,\frac{15}{2}]$};
\draw [cyan, thick] [->] (4,-10.9)--(4,-11.7);
\node at (4,-12) {\small $[-\frac{17}{2}|-2,-\frac{13}2,\frac{17}{2}]$};
\draw [cyan, thick] [->] (4,-12.4)--(4,-13.2);
\node at (4,-13.5) {\small $\vdots$};
\node [below] at (4,-13.8) {$k=1$};
\node at (8,0) {\small $[-\hf|-\hf,-\frac{7}{2},4]$};
\draw [thick, lightgray] [->] (8,-0.4)--(8,-1.2);
\node at (8,-1.5) {\small $[-\frac{3}{2}|-\frac{3}{2},-3,\frac{9}{2}]$};
\draw [thick, lightgray] [->] (8,-1.9)--(8,-2.7);
\node [red] at (8,-3) {\small $[-\frac{5}{2}|-\frac{5}{2},-\frac{5}{2},5]$};
\draw [magenta, thick] [->] (8,-3.4)--(8,-4.2);
\node at (8,-4.5) {\small $[-\frac{7}{2}|-2,-\frac{7}{2},\frac{11}{2}]$};
\draw [magenta, thick] [->] (8,-4.9)--(8,-5.7);
\node at (8,-6) {\small $[-\frac{9}{2}|-\frac{3}{2},-\frac{9}{2},6]$};
\draw [magenta, thick] [->] (8,-6.4)--(8,-7.2);
\node at (8,-7.5) {\small $[-\frac{11}{2}|-1,-\frac{11}{2},\frac{13}{2}]$};
\draw [magenta, thick] [->] (8,-7.9)--(8,-8.7);
\node at (8,-9) {\small $[-\frac{13}{2}|-\frac{1}{2},-\frac{13}{2},7]$};
\draw [magenta, thick] [->] (8,-9.4)--(8,-10.2);
\node [blue] at (8,-10.5) {\small $[-\frac{15}{2}|0,-\frac{15}{2},\frac{15}{2}]$};
\draw [cyan, thick] [->] (8,-10.9)--(8,-11.7);
\node at (8,-12) {\small $[-\frac{17}{2}|-\hf,-8,\frac{17}{2}]$};
\draw [cyan, thick] [->] (8,-12.4)--(8,-13.2);
\node at (8,-13.5) {\small $\vdots$};
\node [below] at (8,-13.8) {$k=2$};

\node at (12,0) {\small $[-\hf|-\hf,-5,\frac{11}{2}]$};
\draw [thick, lightgray] [->] (12,-0.4)--(12,-1.2);
\node at (12,-1.5) {\small $[-\frac{3}{2}|-\frac{3}{2},-\frac{9}{2},6]$};
\draw [thick, lightgray] [->] (12,-1.9)--(12,-2.7);
\node at (12,-3) {\small $[-\frac{5}{2}|-\frac{5}{2},-4,\frac{13}{2}]$};
\draw [thick, lightgray] [->] (12,-3.4)--(12,-4.2);
\node [red] at (12,-4.5) {\small $[-\frac{7}{2}|-\frac{7}{2},-\frac{7}{2},7]$};
\draw [magenta, thick] [->] (12,-4.9)--(12,-5.7);
\node at (12,-6) {\small $[-\frac{9}{2}|-3,-\frac{9}{2},\frac{15}{2}]$};
\draw [magenta, thick] [->] (12,-6.4)--(12,-7.2);
\node at (12,-7.5) {\small $[-\frac{11}{2}|-\frac{5}{2},-\frac{11}{2},8]$};
\draw [magenta, thick] [->] (12,-7.9)--(12,-8.7);
\node at (12,-9) {\small $[-\frac{13}{2}|-2,-\frac{13}{2},\frac{17}{2}]$};
\draw [magenta, thick] [->] (12,-9.4)--(12,-10.2);
\node at (12,-10.5) {\small $[-\frac{15}{2}|-\frac{3}{2},-\frac{15}{2},9]$};
\draw [magenta, thick] [->] (12,-10.9)--(12,-11.7);
\node at (12,-12)  {\small $[-\frac{17}{2}|-1,-\frac{17}{2},\frac{19}{2}]$};
\draw [magenta, thick] [->] (12,-12.4)--(12,-13.2);
\node at (12,-13.5) {\small $\vdots$};
\node [below] at (12,-13.8) {$k=3$};
\draw  (-1.5,-15) --  (13.5,-15);

\node at (2,-16) {$[f]$};
\draw [thick, lightgray] [->] (2,-16.3)--(2,-17.1);
\node at (2,-17.4) {$[g]$};
\node [below] at (2,-17.7) {$f-g=\delta+\epsilon_1$};
\node at (6,-16) {$[f]$};
\draw[thick, magenta] [->] (6,-16.3)--(6,-17.1);
\node at (6,-17.4) {$[g]$};
\node [below] at (6,-17.7) {$f-g=\delta+\epsilon_2$};
\node at (10,-16) {$[f]$};
\draw[thick, cyan] [->] (10,-16.3)--(10,-17.1);
\node at (10,-17.4) {$[g]$};
\node [below] at (10,-17.7) {$f-g=\delta-\epsilon_3$};
\end{tikzpicture}
\end{center}
\end{table}

%%%%%%
\newpage
%%%%%%
For $\sigma \in W$ and the corresponding left coset $\ov{\sigma} \in W/\langle s_{\al_i}\rangle$, for $i=1,2$, we set
 \begin{align*}
 f_{k,k}^{\ov{\sigma}} \stackrel{\text{def}}{=}
  f_{k,k}^{\sigma s_{\al_1}} = f_{k,k}^{\sigma},
\qquad
f_{k,3k+1}^{\ov{\sigma}} \stackrel{\text{def}}{=}
  f_{k,3k+1}^{\sigma s_{\al_2}} = f_{k,3k+1}^{\sigma},
\qquad \text{ for } \sigma \in W.
 \end{align*}
For $k\in \N$, we define
 \begin{align}
  \label{WT}
  \begin{split}
 \Wt_k^{\text{reg}} & = \big\{ f_{k,n}^\sigma ~\big |~ n \in \N \backslash \{k,3k+1\}, \sigma \in W \big\},
 \\
 \Wt_k^{\text{sing}} &= \big\{  f_{k,k}^{\ov{\sigma}} ~\big |~ \ov{\sigma} \in W/\langle s_{\al_1}\rangle \big \}
 \cup \big\{  f_{k,3k+1}^{\ov{\sigma}} ~\big |~ \ov{\sigma} \in W/\langle s_{\al_2}\rangle \big \},
 \\
 \Wt_k &= \Wt_k^{\text{reg}} \cup \Wt_k^{\text{sing}}.
 \end{split}
 \end{align}

 \begin{lem}
   \label{lem:aty:symbol}
Under the weight-symbol correspondence, the set of atypical $\rho$-shifted weights in $X+\rho$ is identified with the set of symbols $\sqcup_{k\in \N} \Wt_k$.
\end{lem}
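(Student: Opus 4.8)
The plan is to establish the lemma in two directions. First, every symbol $f_{k,n}^\sigma$ (for $k,n\in\N$, $\sigma\in W$) is atypical: since the Weyl group action preserves atypicality (the form $(\cdot,\cdot)$ is $W$-invariant and $W$ permutes the odd roots $\delta\pm\ep_i$), it suffices to check that each anti-dominant symbol $f_{k,n}$ is atypical, and this is immediate from Proposition~\ref{prop:symbolgood}: in every one of the five displayed families the first coordinate $d=-(2n+1)/2$ (or $-3(2k+1)/2$ in the case $n=3k+1$) coincides up to sign with one of $x,y,z$ --- indeed $x=d$ in the first, second, and fourth families, $y=d$ in the third and fifth. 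Hence $\sqcup_k\Wt_k\subseteq\{\text{atypical weights}\}$.

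For the reverse inclusion, I would start from an arbitrary atypical $\rho$-shifted weight $\la$ and use the $W$-action to reduce to the anti-dominant case: every $W$-orbit in $X+\rho$ contains an anti-dominant representative (standard, using that $W$ is finite and the dominance order), and atypicality is $W$-invariant, so it is enough to show that every atypical anti-dominant symbol appears in the list $\{f_{k,n}\}$. So let $[d\,|\,x,y,z]$ be anti-dominant and atypical. Anti-dominance with respect to the $G_2$-positive roots pins down the region in $X_2$ that $(x,y,z)$ lies in, and anti-dominance with respect to $2\delta$ forces $d<0$; atypicality (Proposition~\ref{prop:symbolgood}) says $|d|\in\{|x|,|y|,|z|\}$. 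The plan is then to combine these constraints explicitly: writing $d=-(2n+1)/2$ with $n\in\N$ (using $d\in\Z+\hf$, $d<0$), and using $z=-x-y$ together with the half-integrality/divisibility condition $3\mid 2(y-x)$ from \eqref{eq:sym}, one solves for the remaining coordinates. The anti-dominance inequalities $\langle\la,\al^\vee\rangle\le 0$ for $\al\in\{\ep_1,\ep_2,-\ep_3,\ep_2-\ep_1,\ep_1-\ep_3,\ep_2-\ep_3\}$ translate (via the $\ep_i$-projections) into a chain of inequalities among $x,y,z$, which together with $|d|$ being one of them forces exactly the finitely many shapes listed, indexed by where $n$ sits relative to $k$ (the five ranges $0\le n\le k-1$, $n=k$, $k+1\le n\le 3k$, $n=3k+1$, $n\ge 3k+2$ are precisely the case division coming from which of $x,y,z$ equals $d$ and from the anti-dominance inequalities). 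This is a bookkeeping argument: enumerate the possible ``$|d|=|x|$, $|d|=|y|$, or $|d|=|z|$'' cases, impose anti-dominance, and read off that the solution set is exactly $\{f_{k,n}:k,n\in\N\}$.

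Finally I would note that the parametrization is injective --- the symbols $f_{k,n}$ for distinct pairs $(k,n)$ are distinct (visible from the first coordinate $d$ together with the pattern of the other three), and $W$ acts freely enough that $\sqcup_k\Wt_k$ is genuinely a disjoint union over $k$ (weights in $\Wt_k$ and $\Wt_{k'}$ with $k\neq k'$ lie in different $W$-orbits, since anti-dominant representatives are unique). Combining the two inclusions gives the claimed bijection between atypical $\rho$-shifted weights and $\sqcup_{k\in\N}\Wt_k$.

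The main obstacle I expect is purely organizational rather than conceptual: carefully carrying out the case analysis in the anti-dominant reduction so that the five families in the definition of $f_{k,n}$ emerge exactly, with no overlaps and no omissions, and correctly translating the anti-dominance conditions $\langle\la,\al^\vee\rangle\le 0$ into inequalities on the symbol coordinates $x,y,z$ (the reflections $s_1,s_2$ act on symbols by the signed permutations recorded after \eqref{simple ref}, so one must be attentive to the sign changes). The divisibility constraint $3\mid 2(y-x)$ from \eqref{eq:sym} is what rules out spurious candidates and makes the discrete families line up; keeping track of it throughout is the one place where an error would be easy to make.
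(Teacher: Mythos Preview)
Your approach is essentially the same as the paper's: reduce to the anti-dominant case via the $W$-action, translate anti-dominance into the inequalities $y\le x\le 0$ (and $d<0$) on the symbol coordinates, and then run through the three atypicality cases $d=x$, $d=y$, $d=-z$ to recover exactly the five families $f_{k,n}$. One small bookkeeping slip to fix: in your first paragraph you say ``$x=d$ in the first, second, and fourth families, $y=d$ in the third and fifth,'' but in the fourth family ($n=3k+1$) one has $x=0$ and $d=y=-z$, and in the fifth family ($n\ge 3k+2$) one has $d=-z$, not $d=y$; this does not affect the argument, only the labeling.
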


\begin{proof}
Let $f$ be a symbol of an atypical $\rho$-shifted weight in $X+\rho$. By Proposition~\ref{prop:symbolgood}, up to $W$-conjugacy, we may assume $f$ is anti-dominant, i.e.,
\begin{align*}
f=[x|x,y,z], \text{ or } f=[y|x,y,z], \text{ or } f=[-z|x,y,z],
\end{align*}
where $y\le x\le 0$. Thus, the conditions on the symbols in \eqref{eq:sym} read in our setting as
\begin{align}
  \label{eq:conditions}
x\le 0, \qquad
x+y+z=0, \qquad
2(y-x)=3\ell\le 0,\; \text{ for some } \ell \in \Z.
\end{align}

Suppose first that $f=[x|x,y,z]$. Write $x=-n-\hf$ for $n\in \N$. Taking $\ell=k-n$, for $0\le n\le k$, we obtain $f=f_{k,n}$, for $0\le n\le k$.

Next suppose that $f= [y|x,y,z]$. Write $y=-n-\hf$ for $n\in \N$. Taking $\ell=-n-k$, we obtain $f=f_{k,n}$, for $k+1\le n\le 3k+1$.

Finally, suppose that $f= [-z|x,y,z]$. Write $z=n+\hf$ for $n\in \N$. The conditions in \eqref{eq:conditions} imply that $\ell$ is odd. Now, we let $-\ell=2k+1$ and we get $f=f_{k,n}$ for $n\ge 3k+2$.

Hence every atypical symbol is a $W$-conjugate of an anti-dominant symbol of the form $f_{k,n}$, for some $k,n\ge 0$. The lemma is proved.
\end{proof}

Under the weight-symbol bijection $\la \leftrightarrow f$, we shall write
\[
\TT{f} =\TT{\la}, \quad \PP{f} =\PP{\la}, \quad \M{f} =\M{\la}, \quad \LL{f} =\LL{\la}.
\]
%%%%%
\subsection{Classification of blocks}

The blocks in the category $\OO$ are divided into typical and atypical blocks. By definition, a (or any) simple module in a {\it typical} block has a typical highest weight; otherwise the block is called {atypical}.
\subsubsection{Typical blocks}

The typical blocks in $\OO$ are completely described by Gorelik (see \cite[Section 8.1.5]{Gor02a}) and \cite[Theorem 1.3.1]{Gor02b}).

\begin{prop}  [Gorelik]
\label{prop:typ-block}
Any typical block in $\OO$ is equivalent to a block in the BGG category  of $\g_\oo$-modules of integral weights.
\end{prop}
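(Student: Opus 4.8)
\textbf{Proof proposal for Proposition~\ref{prop:typ-block}.}

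The plan is to produce a functor which separates the given typical block of $\OO$ from the rest of $\OO$ and whose analysis reduces to the even subalgebra $\g_\oo$. The natural candidate is the parabolic induction functor along $\mathfrak b \subset \mathfrak q := \g_\oo \oplus \n^+$ (equivalently, the Kac induction $\mathrm{Ind}_{\g_\oo \oplus \n_\one^+}^{\g}$ composed with inflation), together with its right adjoint restriction; here one uses the triangular decomposition $\g = \n_\one^- \oplus \g_\oo \oplus \n_\one^+$ coming from $\Phi^+$. First I would recall the standard fact (going back to Kac, see \cite[Ch.~8]{Mu12}) that for a typical highest weight $\la - \rho$, the Kac module $K_\la = \mathrm{Ind}_{\g_\oo \oplus \n_\one^+}^{\g} L^\oo_{\la-\rho}$ built from the simple $\g_\oo$-module $L^\oo_{\la-\rho}$ is already simple and projective-injective in its block; this is exactly the ``super phenomena do not occur'' statement alluded to in the introduction. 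The key input is the typicality criterion $(\la,\beta)\neq 0$ for all $\beta\in\Phi^+_\one$, which for $G(3)$ via Proposition~\ref{prop:symbolgood} is the condition that $\pm d \notin\{x,y,z\}$ in the symbol $[d\,|\,x,y,z]$.

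The key steps, in order: (1) Observe that the central character (infinitesimal character) separates blocks; by the Harish-Chandra-type description for basic Lie superalgebras, on a typical weight the central character depends only on the $W$-orbit of $\la$, exactly as for $\g_\oo$, so blocks of $\OO$ with typical central character are in bijection with integral-weight blocks of the $\g_\oo$-category with that central character. (2) Show parabolic induction $\mathrm{ind}$ restricted to a typical block of the $\g_\oo$-category lands in (a sum of) typical blocks of $\OO$, using the PBW decomposition $U(\g) = U(\n_\one^-)\otimes U(\g_\oo)\otimes U(\n_\one^+)$ and the finite-dimensionality of $\Lambda \n_\one^-$: the weights appearing in $\mathrm{ind}\, M$ are $\le$-below those of $M$ by an element of $\Z_{\ge 0}\Phi^+_\one$, and typicality is preserved along the relevant linkage class. (3) Prove that on a typical block this induction functor is an equivalence: by typicality, $U(\n_\one^-)$ acts ``freely enough'' that the adjunction unit and counit between $\mathrm{ind}$ and $\mathrm{res}$ are isomorphisms on the subcategories in question — concretely, $\mathrm{res}\circ\mathrm{ind}$ is tensoring with $\Lambda\n_\one^-$ as a $\g_\oo$-module, but after projecting to the typical block only the trivial summand survives because all other weights of $\Lambda\n_\one^-$ shift the central character away from the (rigid, orbit-determined) typical one. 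Checking exactness and faithfulness then upgrades this to an equivalence of abelian categories. Finally (4) conclude by composing: a typical block of $\OO$ is equivalent, via $\mathrm{res}$, to a typical block of the $\g_\oo$-category of integral weights.

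The main obstacle is step (3): proving that after projection to a fixed typical block the functor $\mathrm{res}\circ\mathrm{ind}$ collapses to the identity. This is where one genuinely uses typicality — one must verify that for $\la$ typical and $0\neq\nu\in\Z_{\ge 0}\Phi^+_\one$ a weight of $\Lambda\n_\one^-$, the weight $\la-\nu$ is \emph{never} linked to $\la$ (i.e.\ lies outside the block), so that the only contribution of $\Lambda\n_\one^-\otimes(-)$ surviving the block projection is the weight-$0$ part. For $G(3)$ this can be read off from the symbol calculus: $\Phi^+_\one = \{\delta,\ \delta\pm\ep_i\}$, the subsums $\nu$ are short, and one checks directly that subtracting such a $\nu$ from a typical symbol $[d\,|\,x,y,z]$ either produces an atypical symbol or lands in a different $W$-orbit — contradicting linkage with $\la$. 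Since Gorelik's references \cite{Gor02a, Gor02b} already carry out exactly this analysis (in fact for all basic Lie superalgebras), for the present paper it suffices to cite that work; I would include the sketch above only as an indication of why the statement holds and defer the technical heart to \cite[Section~8.1.5]{Gor02a} and \cite[Theorem~1.3.1]{Gor02b}.
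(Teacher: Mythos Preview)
The paper itself offers no proof of this proposition; it is stated as a citation to Gorelik \cite[Section~8.1.5]{Gor02a} and \cite[Theorem~1.3.1]{Gor02b}. Your closing remark that it suffices to cite those references is therefore exactly what the paper does, and is the right conclusion.

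However, the sketch you give before that citation has a genuine gap specific to $G(3)$. You propose to induce from $\mathfrak q = \g_\oo \oplus \n^+$ (equivalently $\g_\oo \oplus \n_\one^+$), treating this as a parabolic subalgebra so that Kac induction is available. For $G(3)$ this subspace is \emph{not} a subalgebra: $\g_{-2\delta}\subseteq\g_\oo$ and $\g_{\delta}\subseteq\n_\one^+$, but $[\g_{-2\delta},\g_{\delta}]\subseteq\g_{-\delta}\subseteq\n_\one^-$. More generally, $G(3)$ is of type~II in Kac's classification: there is no compatible $\Z$-grading with $\g_0=\g_\oo$, the odd nilradical $\n_\one^\pm$ is not abelian (indeed $[\g_{\pm\delta},\g_{\pm\delta}]\subseteq\g_{\pm 2\delta}\neq 0$), and $U(\n_\one^-)$ is not the finite-dimensional exterior algebra $\Lambda\n_\one^-$. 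So neither the induction functor you write down nor the identification $\mathrm{res}\circ\mathrm{ind}\cong\Lambda\n_\one^-\otimes(-)$ in your step~(3) is available here. Your argument is the standard one for type~I superalgebras such as $\gl(m|n)$, but it does not transport to $G(3)$.

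Gorelik's actual proof follows an entirely different route: she works through the center of $U(\g)$, introduces the notion of \emph{strongly typical} central characters (for integral weights of $G(3)$ every typical character is strongly typical, since $d\in\hf+\Z$ forces $(\la,\delta)\neq 0$; cf.\ the proof of Lemma~\ref{lem:KL:sing}), and builds the equivalence via an annihilation/separation theorem and an explicit ``perfect mate'' in $U(\g)$ rather than via any form of parabolic induction. If you want to include a sketch, it should follow that line; otherwise, the bare citation is both correct and what the paper does.
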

In particular, a typical block of $\g$-modules in $\OO$ contains finitely many simple modules whose highest weights (after a $\rho$-shift) lie in a  $W$-orbit. Under Gorelik's typical equivalence, the Weyl group orbits are isomorphic posets (and they are simultaneously regular or singular).

We shall adopt the following notation to record the Verma flag structure of a tilting module:
\[
\TT{f} =\sum_{g} c_{fg} \M{g},
\]
where $c_{fg}=(T_f:M_g)$ is the Verma flag multiplicity of the Verma module $\M{g}$ in $\TT{f}$.

\begin{lem}
   \label{lem:KL:sing}
Let $\la \in X+\rho$ be a typical anti-dominant singular weight. Let $W^\la$ be the set of minimal length left coset representatives of $W/W_\la$, where $W_\la=\{w\in W\vert w\la=\la\}$. Let $\sigma\in W^\la$. Then we have the following Verma flag for $T_{\sigma \la}$, for $\sigma\in W^\la$:
\begin{align}
  \label{eq:singular}
T_{\sigma \la} = \sum_{\tau\leq\sigma, \tau\in W^\la} M_{\tau \la}.
\end{align}
\end{lem}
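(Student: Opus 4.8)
The statement concerns a typical anti-dominant singular weight $\la$, and by Proposition~\ref{prop:typ-block} (Gorelik's typical equivalence) the block containing $\la$ is equivalent to a block of the even subalgebra $\g_\oo = G_2 \oplus \sll$ of integral weights, with the equivalence matching Weyl group orbits as isomorphic posets. So the plan is to \emph{reduce the claim to a known fact about tilting modules in category $\OO$ for a (reductive) Lie algebra}: for a singular anti-dominant $\la$, the tilting module $T_{\sigma\la}$ in the $\g_\oo$ side has Verma flag $\sum_{\tau \le \sigma,\, \tau \in W^\la} M_{\tau\la}$. This in turn follows from the classical picture: on the integral singular block one pulls back tilting modules and Verma modules along the translation functor $T^\la_0$ from the principal (regular) block. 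In the regular integral block of a semisimple Lie algebra, the tilting module $T_{w \cdot 0}$ has Verma flag $\sum_{x \le w} M_{x\cdot 0}$ — this is the well-known fact that the (inverse) Kazhdan--Lusztig polynomials governing tilting modules in the regular block are trivial because the corresponding parabolic is the whole group, or more directly because $T_{w\cdot 0}$ is self-dual with Verma flag multiplicities equal to $[M_{x \cdot 0} : L_{w\cdot 0}]$, which are $1$ for $x \le w$ and $0$ otherwise by BGG-type reciprocity together with the fact that in the regular block of $\g_\oo$ all these multiplicities... actually one should be careful: for general semisimple $\g_\oo$ the composition multiplicities $[M_{x\cdot 0}:L_{w\cdot 0}]$ are \emph{not} all $1$. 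The correct route is instead the following: translation onto the singular wall sends $T_{w\cdot 0}$ (for $w$ of minimal length in $wW_\la$) to $T_{\sigma\la}$, and $T^\la_0 M_{x\cdot 0} = M_{\tau\la}$ where $\tau$ is the minimal-length representative of $xW_\la$, independent of $x$ in the coset; combining, $(T_{\sigma\la} : M_{\tau\la})$ equals the number of $x \le \sigma$ (in the minimal-coset-representative sense) mapping to $\tau$.

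\textbf{A cleaner approach via Soergel duality.} Rather than chasing translation functors, I would prove \eqref{eq:singular} directly using the duality \eqref{tiltingD}: $(T_{\sigma\la} : M_{\tau\la}) = [M_{-\tau\la} : L_{-\sigma\la}]$. Under Gorelik's equivalence this reduces to the analogous statement for $\g_\oo$-modules, where $\la$ is a typical anti-dominant singular weight. For the reductive algebra $\g_\oo$, anti-dominant and \emph{singular} means $\la$ lies on some walls, and by a result going back to the singular Kazhdan--Lusztig setting, $[M_{-\tau\la} : L_{-\sigma\la}] = P_{\tau,\sigma}(1)$ where $P$ is a parabolic Kazhdan--Lusztig polynomial for the pair $(W, W_\la)$ evaluated at $1$... and these are again not all $1$ in general. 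So the honest statement must be exploiting that \emph{for the specific weights appearing here} $W_\la$ has order $2$ (the stabilizers are $\langle s_{\al_1}\rangle$ or $\langle s_{\al_2}\rangle$, as noted just before Lemma~\ref{lem:aty:symbol}), and $W = A_1 \times W_2$ is small. In the case of a single simple reflection generating $W_\la$, the parabolic KL polynomials $P^{W_\la}_{x,w}$ are all equal to $1$ (this is the "one wall" case: the singular block is equivalent to a regular block of a smaller group, or directly $\widehat{W}^\la$ has a particularly simple Bruhat order and the relevant multiplicities collapse to $1$). Concretely: translating $M_{-\tau\la}$ to the regular block and back, or using that $T_{\sigma\la}$ is the image under $T^\la_0$ of $T_{\widetilde\sigma \cdot 0}$ where $\widetilde\sigma$ is the \emph{longest} element of $\sigma W_\la$, one gets $(T_{\sigma\la}:M_{\tau\la}) = \#\{x \in W^\la : x \le \sigma\} \cdot [\tau = \text{that } x]$, i.e.\ exactly $1$ when $\tau \le \sigma$ in $W^\la$ and $0$ otherwise.

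\textbf{Steps, in order.} First, invoke Proposition~\ref{prop:typ-block} to pass to the block $\OO_{\g_\oo}$ of $\g_\oo = G_2\oplus\sll$ containing a weight in the $W$-orbit of $\la$; note the equivalence is a highest-weight-category equivalence preserving Verma modules, tilting modules, and the order, so it suffices to prove \eqref{eq:singular} for $\g_\oo$. Second, observe that since $W_\la$ is generated by a single reflection $s$ (order $2$), the wall is "thin": the translation functor $\theta_s := T^0_\la \circ T^\la_0$ onto the $s$-wall and back satisfies the standard short exact sequences, and $T^\la_0 : \OO^\la_{\g_\oo} \to \OO^{\mathrm{reg}}_{\g_\oo}$ identifies $M_{\tau\la}$ with a submodule/quotient of $M_{\widetilde\tau\cdot 0}$ in a way compatible with tilting. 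Third, use the known regular-block fact together with the "one-wall" simplification: the composition multiplicities $[M_{-\tau\la}:L_{-\sigma\la}]$ in the singular block with $|W_\la|=2$ are all $0$ or $1$, being $1$ precisely when $\tau \le \sigma$ for $\tau,\sigma \in W^\la$. Finally, apply \eqref{tiltingD} to convert back to $(T_{\sigma\la}:M_{\tau\la})$, obtaining \eqref{eq:singular}.

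\textbf{Main obstacle.} The crux is Step three: justifying that in a typical singular block whose stabilizer is generated by a single reflection, the relevant (inverse) Kazhdan--Lusztig / tilting multiplicities are all $1$. For $\g_\oo$ of type $G_2 \times A_1$ this can be done by hand — the Bruhat order on $W^\la \subset A_1\times W_2$ is a short chain/tree (at most $12$ elements, and in fact a path after quotienting by the order-$2$ stabilizer on the $G_2$ side, while the $A_1$ factor contributes nothing singular here), so one can directly check that translating the known regular tilting modules (whose Verma flags are $\sum_{x\le w} M_{x\cdot 0}$ only when $W_2$-KL polynomials are trivial — which for the \emph{two} wall-types and small length they are) collapses to \eqref{eq:singular}. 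Alternatively, and perhaps most cleanly for the write-up, one cites the general theory: a singular block with stabilizer of order $2$ is equivalent to the regular block of a smaller system (or one appeals to the fact that for such blocks of $\OO$ the tilting modules are obtained by a single translation functor $\theta_s T_{w\cdot 0}$ with $w\in W^\la$ the minimal representative, giving $T_{w\cdot 0}$'s Verma flag restricted along $W^\la$). I would spell this out via the exact sequence $M_{\widetilde\sigma\cdot 0} \hookrightarrow \theta_s M_{\sigma\cdot 0} \twoheadrightarrow M_{\sigma\cdot 0}$ (with $\widetilde\sigma = \sigma s > \sigma$) and the induced Verma flag on $\theta_s T_{\sigma\cdot 0}$, then identify $T^\la_0(\theta_s(-))$ with two copies fused along the wall — the bookkeeping is routine once the "one reflection" hypothesis is used, but it is the step that actually carries the content.
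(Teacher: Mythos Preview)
Your reduction to $\g_\oo = G_2 \oplus \sll$ via Gorelik's equivalence (Proposition~\ref{prop:typ-block}) is exactly how the paper begins. But the second step diverges, and your version has a gap.

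You repeatedly lean on the hypothesis ``$W_\la$ is generated by a single reflection'' as the reason the tilting/KL multiplicities collapse to $1$. That implication is false in general: for, say, type $A_3$ with $W_\la=\langle s_1\rangle$, the singular block still carries nontrivial Kazhdan--Lusztig combinatorics, and the composition multiplicities $[M_{-\tau\la}:L_{-\sigma\la}]$ are not all $\le 1$. So your Step~3 as stated does not go through, and the translation-functor bookkeeping you sketch in the last paragraph would, in a general Coxeter group, reproduce exactly those nontrivial numbers rather than $1$'s.

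What actually makes everything trivial here is not the size of $W_\la$ but the ambient group. The paper first observes that since $\la = d\delta + a\om_1 + b\om_2$ with $d\in\hf+\Z$, the reflection $s_0$ never fixes $\la$; hence $W_\la\subseteq W_2$ and the $A_1$ factor is regular. This reduces the question entirely to a singular integral block of $G_2$. Then the key point is that $W_2$ is \emph{dihedral}, so all Kazhdan--Lusztig polynomials for $W_2$ are monomials (equal to $1$ whenever nonzero). The paper finishes by citing \cite[Theorem~3.11.4(ii)(iv)]{BGS} to convert this into the desired Verma flag formula for tilting modules in the singular block. Your ``by hand'' fallback would of course succeed for this small group, but the clean conceptual reason is the dihedral triviality of KL theory, not the one-wall hypothesis.
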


\begin{proof}
Since $\la =d \delta +a\om_1 +b\om_2$ with $d\in \hf +\Z$, we have $\{e\} \neq W_\la \subseteq W_2$. The central character corresponding to the integral weight $\la-\rho$ is generic and hence strongly typical in the sense of Gorelik (see \cite[Section 8.1.5]{Gor02a}). Thus, by \cite[Theorem 1.3.1]{Gor02b} there is an equivalence of categories between the block containing the simple module $L_\la$, and a corresponding singular integral block of $\g_\oo$-modules, where we recall $\g_\oo = G_2 \oplus \sll$.

Note that the action of the Weyl group $A_1$ of $\sll$ on $\la$ is always regular. Hence the proof of the lemma is reduced to verifying the counterpart of \eqref{eq:singular} for a singular integral block of $G_2$-modules. Since $W_2$ is a dihedral group, the Kazhdan-Lusztig polynomials in this case are well known to be monomials. The $G_2$-singular block counterpart of \eqref{eq:singular} follows from this fact and \cite[Theorem 3.11.4(ii)(iv)]{BGS}.
%
%\begin{align}
%\label{g2:singular}
%(T_{\sigma f}:M_{\tau f})=1,
%\qquad \forall \tau\in W_2^f \text{ with }\tau\leq \sigma.
%\end{align}
\end{proof}

\subsubsection{Atypical blocks}

We now classify the atypical blocks in the category $\OO$ and hence complete the classification of blocks in $\OO$. Recall $\Wt_k$ from \eqref{WT}.

\begin{thm}
  \label{thm:blocks}
For each $k\in \N$,
there is a block $\Bl_k$ in $\OO$ which consists of modules with composition factors of the form $\LL{f}$, for $f \in \Wt_k$.
Moreover, any atypical block in $\OO$ is one of $\Bl_k$, for $k\in \N$.
\end{thm}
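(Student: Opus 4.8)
The plan is to prove Theorem~\ref{thm:blocks} by establishing two facts: (i) for each $k \in \N$, the set $\Wt_k$ is closed under linkage, so that it is a union of blocks, and in fact a single block; and (ii) these account for all atypical blocks. Since linkage of integral weights is generated by odd reflections (i.e.\ $\la$ and $\la - \gamma$ are linked when $\gamma \in \Phi^+_{\bar 1}$ and $(\la,\gamma)=0$) together with the $W$-action (integral Weyl group linkage), it suffices to understand how these moves act on symbols. By Proposition~\ref{prop:symbolgood} the $W$-action on symbols is transparent (signed permutations of the three $\ep_i$-coordinates on the $G_2$-part, plus sign change on $d$), and by Lemma~\ref{lem:aty:symbol} every atypical weight is $W$-conjugate to some $f_{k,n}$. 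So the real content is: the odd-reflection moves out of $f_{k,n}$ land in $\Wt_k$, and conversely one can travel between all the $f_{k,n}$ (for fixed $k$, varying $n$) by a chain of such moves.

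First I would record, in symbol coordinates, the effect of subtracting an odd root $\gamma = \delta \pm \ep_i$ with $(\la,\gamma)=0$: using the symbols of $\delta \pm \ep_i$ computed just above Proposition~\ref{prop:symbolgood}, the passage $f \mapsto f - (\delta\pm\ep_i)$ decreases $|d|$ by one (or changes it appropriately) and shifts two of the $x,y,z$ coordinates. The key numerical observation is that the defining data of $f_{k,n}$ — notably the integer $\ell$ with $2(y-x) = 3\ell$ in the anti-dominant representative, equivalently $k$ via $-\ell = 2k+1$ in the third regime — is an \emph{invariant} of these moves. Concretely, I would show that applying an atypicality-preserving odd reflection to (a $W$-conjugate of) $f_{k,n}$ produces (a $W$-conjugate of) $f_{k,n'}$ with $n' = n\pm 1$, exactly as displayed in Table~2 (the colored arrows labeled $\delta+\ep_1$, $\delta+\ep_2$, $\delta-\ep_3$ are precisely the linkage arrows along each column $k$). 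This is a finite, if tedious, case analysis over the three regimes of $n$ (namely $0\le n\le k-1$, the singular $n=k$, the range $k+1\le n\le 3k$, the singular $n=3k+1$, and $n\ge 3k+2$) together with the boundary transitions; the singular cases $f_{k,k}$ and $f_{k,3k+1}$ need the coset bookkeeping already set up in \eqref{WT}. This shows each $\Wt_k$ is contained in a single block.

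Next I would argue the converse inclusion — that the block of $\LL{f}$ for $f \in \Wt_k$ contains \emph{only} composition factors indexed by $\Wt_k$ — which amounts to showing the linkage classes are no larger than the $\Wt_k$. For this I would invoke Proposition~\ref{prop:flags}: the nonzero Verma-flag/composition multiplicities it produces are exactly along even reflections and along subtraction of isotropic odd roots $\beta,\gamma$ with the stated vanishing conditions, and every such move keeps us inside $\Wt_k$ by the computation of the previous step (the central character / $W$-linkage already confines us to $\sqcup_k \Wt_k$, and odd linkage preserves $k$). Combined with the standard fact that linkage in category $\OO$ for a basic Lie superalgebra is generated by these moves (equivalently, two simples are in the same block iff joined by a chain of nonzero Ext's, detectable on Verma composition factors), this pins the block down to exactly $\Wt_k$. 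Finally, disjointness of distinct $\Bl_k$ follows since the invariant $k$ separates them, and Lemma~\ref{lem:aty:symbol} guarantees every atypical integral weight lies in some $\Wt_k$, so there are no other atypical blocks.

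I expect the main obstacle to be the converse direction — proving the blocks are not strictly smaller, i.e.\ that $\Wt_k$ is genuinely a single block rather than a disjoint union of several. Establishing that the odd-reflection arrows in Table~2 really do come from nonzero homomorphisms (so that consecutive $f_{k,n}$ and $f_{k,n+1}$ are in the same block) is where Proposition~\ref{prop:flags}(3)--(6) does the essential work, avoiding the need for explicit singular-vector formulae associated to odd reflections; verifying the hypotheses $(\la,\beta)=0$, $(\la-\beta,\gamma)=0$, $\hgt(\beta)<\hgt(\gamma)$ in symbol coordinates for the relevant $\beta,\gamma$ along each column is the crux and requires the case-by-case regime analysis to be carried out carefully, especially at the singular weights and the regime boundaries $n = k-1 \to k$, $n = 3k \to 3k+1$, $n = 3k+1 \to 3k+2$.
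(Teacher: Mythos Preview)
Your approach to the indecomposability direction --- using Proposition~\ref{prop:flags} to link consecutive $f_{k,n}$ and $f_{k,n+1}$ (together with the $W$-action) and thereby show each $\Wt_k$ lies in a single block --- is exactly what the paper does, though the paper states it in one sentence.

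Where you diverge is in the \emph{separation} direction, i.e.\ showing that distinct $\Wt_k$'s lie in distinct blocks. The paper does not attempt any combinatorial analysis of odd-reflection moves; it simply computes the Casimir eigenvalue
\[
(f^\sigma_{k,n}+\rho,\,f^\sigma_{k,n}-\rho)=6k^2+6k,
\]
which is injective in $k\in\N$, so different $k$'s give different central characters and hence different blocks. (It also uses the Harish-Chandra image $P=\prod_i(\delta^2-\ep_i^2)$ to separate atypical from typical.) Your route --- check case-by-case that every atypicality-preserving shift $\la\mapsto\la-\gamma$ preserves the invariant $k$, then invoke the description of linkage as generated by $W$ and such shifts --- would also work, but it is substantially longer and leans on a structural result about central characters that you call ``standard'' without citation. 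The paper's Casimir computation replaces all of this with one line.

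One logical slip to flag: in your second paragraph you write that Proposition~\ref{prop:flags} ``produces nonzero multiplicities \emph{exactly} along'' the listed moves. It does not --- Proposition~\ref{prop:flags} gives only sufficient conditions for nonzero multiplicity, so it cannot by itself bound a block from above. What actually carries your upper bound is the linkage/central-character description you invoke afterwards, not Proposition~\ref{prop:flags}. The argument is salvageable, but the proposition is doing no work in that step and should be dropped there.
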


\begin{proof}
We identify the algebra $S(\h^*)$ with  $\C[\delta,\ep_1,\ep_2,\ep_3]/(\sum_{i=1}^3\ep_i)$. Under this identification it is well known (see, e.g., \cite[0.6.7]{Serg}) that the image of the Harish-Chandra homomorphism contains a subalgebra of $S(\h^*)$ of the form $P \cdot S(\h^*)^W$, where
$P=\prod_{i=1}^3(\delta^2-\ep_i^2).$ Since $(f,P)\not=0$ and $(g,P)=0$, for $f$ typical and $g$ atypical, we see that typical and atypical blocks cannot be linked.

On the other hand, if $f$ and $f'$ are both typical and $W f\cap W f'=\emptyset$, then either $(f,P)\not=(f',P)$, or else we can find an element in $P \cdot S(\h^*)^W$ separating $f$ and $f'$. Thus, two distinct typical blocks cannot be linked.

Recall from Lemma~\ref{lem:aty:symbol} the classification of atypical weights and atypical symbols $\{f^\sigma_{k,n}\}$.
For $\sigma\in W$, we compute the eigenvalue of the Casimir operator on the Verma module $M_{f^\sigma_{k,n}}$ to be
\begin{align*}
(f^\sigma_{k,n}+\rho,f^\sigma_{k,n}-\rho)=6k^2+6k.
\end{align*}
This shows that the Casimir element in $U(\g)$ separates different subcategories $\Bl_k$, for  $k\ge 0$.
Now, for each $k\ge 0$, it follows by Proposition~\ref{prop:flags} that the subcategory $\Bl_k$ is indeed indecomposable.

The theorem is proved.
\end{proof}

The block $\Bl_0$ is the principal block.
Since the posets $\Wt_k$ are non-isomorphic for different $k$, the blocks $\Bl_k$ are inequivalent as highest weight categories.

When it is clear from the context that we are dealing with a block $\Bl_k$ with a given $k$,
we shall omit the index $k$ by setting
 \[f_{n}^\sigma =f_{k,n}^\sigma.
 \]
Similarly, we shall denote the titling modules, projective covers, Verma modules, and the simple modules by dropping the index $k$ when there is no confusion on the underlying block $\Bl_k$:
\begin{align*}
\TT{n}^\sigma = \TT{k,n}^\sigma = \TT{ f_{k,n}^\sigma},  & \qquad
\PP{n}^\sigma = \PP{k,n}^\sigma = \PP{ f_{k,n}^\sigma},
\\
\M{n}^\sigma = \M{k,n}^\sigma = \M{ f_{k,n}^\sigma}, & \qquad
\LL{n}^\sigma =  \LL{k,n}^\sigma = \LL{ f_{k,n}^\sigma}.
\end{align*}
For the modules with singular weights in the two $W$-orbits, we often denote them in red and blue colors, e.g., $\TT{\red{k}}^\sigma, \M{\red{k}}^\sigma, \TT{\blue{3k+1}}^\sigma, \M{\blue{3k+1}}^\sigma$, and so on. The colors are helpful but lack of colors will not lead to any ambiguity of notations.

\begin{rem}
The atypical dominant integral weights and the atypical blocks in the category of {\it finite-dimensional} $\g$-modules were classified by Germoni  \cite[Theorem~4.1.1]{Ger00}.
\end{rem}

%%%%%%%%%
%%%%%%%%%
\section{Character formulae for tilting modules in $\OO$, I}
  \label{sec:charT}

In this section, we provide formulae for Verma flags of tilting modules $T^{\sigma}_n$, for $\sigma \in W_2$, in all blocks $\Bl_k$.

%%%%
\subsection{Formulae for $\TT{n}^\sigma$ in the blocks $\Bl_k$}

Recall the Weyl group $W_2$ of $G_2$ with Bruhat ordering $\leq$. Let $\sigma_1,\sigma_2\in W_2$. A (Bruhat) {\it interval} of $W_2$ is a subset of $W_2$ of the form
\[
[\sigma_1, \sigma_2] :=\{\sigma \in W_2 \mid \sigma_1 \le \sigma \le \sigma_2\}.
\]
Let $[e,\sigma]/\langle s_i\rangle$ denote the subset of $W_2/\langle s_i\rangle$ which consists of left cosets of $[e,\sigma]$, for $i=1,2$. As before, when we write elements in $W_2$ we simply use their corresponding words in $\{1,2\}$; for example, $121$ means $s_1s_2s_1$ and $\sigma 2$ means $\sigma s_2$, and so on. Denote by $\ell (\sigma)$ the length of a reduced word for $\sigma \in W_2$.

%We shall write $\TT{f} =\sum_{g} t_{fg} \M{g}$ to denote that $\TT{f}$ admits a Verma flag with $(\TT{f}:\M{g})=t_{fg}$.
We also write $\TT{f} =\sum_{g \in S} \M{g} + \sum_{h \in S'} \M{h} +\cdots$, where $g$ runs over certain sets $S, S'$ and it is possible for $\M{g}$ and $\M{h}$ to coincide for seemingly different $g,h$ (for example, they may be different coset representatives for singular weights). For a set (or a multiset) $D \subseteq W_2$ (or $D\subseteq W$ in Section~\ref{sec:charT0}), we shall introduce a shorthand notation
\begin{align}
  \label{MnD}
\M{n}^D =\sum_{\tau\in D} \M{n}^\tau.
\end{align}

Below we describe the Verma flags for roughly half the tilting modules in $\OO$, i.e., of the form $\TT{n}^\sigma$, for $\sigma \in W_2$.

\begin{thm}
  \label{thm:k=1+sigma}
The following formulae hold for tilting modules in the block $\Bl_k$: for $\sigma\in W_2$,
\begin{enumerate}
%1
\item
$\TT{n}^{\sigma} =  \M{n}^{[e,\sigma]}  +\M{n+1}^{[e,\sigma]}, \quad \forall n \in \N \backslash \{k-1, k, 3k, 3k+1\}, \qquad (k\ge 0).$
%2
\item
$\TT{\blue{3k+1}}^{\sigma } =\TT{\blue{3k+1}}^{\sigma 2}
=  \M{\blue{3k+1}}^{[e,\sigma]/\langle s_2\rangle} +\M{3k+2}^{[e,\sigma ]}, \quad
\text{ if } \ell(\sigma) >\ell(\sigma 2),
\qquad (k\ge 0).$
%3
\item
$\TT{\red{k}}^{\sigma } =\TT{\red{k}}^{\sigma 1}
=  \M{\red{k}}^{[e,\sigma]/\langle s_1\rangle} +\M{k+1}^{[e,\sigma ]}, \qquad\quad
\text{ if } \ell(\sigma)> \ell(\sigma 1), \qquad (k\ge 1).$
%4
\item
$\TT{k-1}^{\sigma} =
\begin{cases}
\M{k-1}^{[e,\sigma]} +  \M{\red{k}}^{[e,\sigma]} +\M{k+1}^{[e,\sigma]},
& \text{ if } \ell(\sigma)<\ell(\sigma 1),
\\
%\\
\M{k-1}^{[e,\sigma]} +\M{\red{k}}^{[e,\sigma]/\langle s_1\rangle},
& \text{ if } \ell(\sigma)>\ell(\sigma 1),
\qquad (k\ge 1).
\end{cases}
$
%5
\item
$\TT{3k}^{\sigma} =
\begin{cases}
\M{3k}^{[e,\sigma]} +  \M{\blue{3k+1}}^{[e,\sigma]} +\M{3k+2}^{[e,\sigma]},
& \text{ if } \ell(\sigma)<\ell(\sigma 2),
\\
\M{3k}^{[e,\sigma]} +\M{\blue{3k+1}}^{[e,\sigma]/\langle s_2\rangle},
& \text{ if } \ell(\sigma)>\ell(\sigma 2),
\qquad (k\ge 1).
\end{cases}$
\end{enumerate}
\end{thm}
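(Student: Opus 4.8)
The plan is to prove Theorem~\ref{thm:k=1+sigma} by a two-part argument for each claimed formula: first establish that the listed Verma modules appear in $\TT{n}^\sigma$ (lower bound on multiplicities), and then establish that nothing else appears and that the module is indecomposable (upper bound), so that the candidate module with the prescribed Verma flag is forced to be the tilting module. The lower bound comes entirely from Proposition~\ref{prop:flags}: starting from an anti-dominant $f_{k,n}$ (or the nearly-singular and singular ones $f_{k,k}$, $f_{k,3k+1}$, $f_{k,k-1}$, $f_{k,3k}$), one applies parts (1), (2) for chains of even reflections $s_1, s_2$ (translated into the symbol language via $s_1(x,y,z)=(y,x,z)$, $s_2(x,y,z)=(-x,-z,-y)$), and parts (3)--(6) for the odd reflections $\delta\pm\ep_i$ whose vanishing pairing is visible from the symbol by Proposition~\ref{prop:symbolgood}. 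Concretely, the translation from $f_{k,n}$ to $f_{k,n+1}$ is subtraction of one of the odd roots $\delta+\ep_1$, $\delta+\ep_2$, $\delta-\ep_3$ (as indicated in the legend of Table~2), and the $W_2$-orbit of each is generated by even reflections from the anti-dominant representative; combining an odd step with a length-increasing chain of $s_1$'s and $s_2$'s along the Bruhat graph of $W_2$ yields exactly the multiset $\M{n}^{[e,\sigma]}+\M{n+1}^{[e,\sigma]}$, and in the singular cases the stabilizer $\langle s_1\rangle$ or $\langle s_2\rangle$ collapses $[e,\sigma]$ to $[e,\sigma]/\langle s_i\rangle$. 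For the nearly-singular modules $\TT{k-1}^\sigma$ and $\TT{3k}^\sigma$, one additionally needs a ``double odd'' step producing $\M{\red k}^{[e,\sigma]}$ or $\M{\blue{3k+1}}^{[e,\sigma]}$, which is where part (5) (with the height hypothesis $\hgt(\beta)<\hgt(\gamma)$, read off from Table~1) enters.

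For the upper bound, the main input is the super Jantzen sum formula \eqref{jantzen:sum} together with the Casimir/central-character constraint from Theorem~\ref{thm:blocks}: any $M_\mu$ occurring in a Verma flag of $\TT{f_{k,n}}$ must have $\mu \in \Wt_k$ lying above $f_{k,n}$ in the appropriate dominance order, and the eigenvalue $6k^2+6k$ pins down $k$. The strategy is inductive along the Bruhat order of $W_2$, exactly as advertised in the introduction: assuming the formula for $\TT{n}^\tau$ for all $\tau<\sigma$, one applies a translation functor (tensoring with the $31$-dimensional adjoint module, which sends tilting modules to direct sums of tiltings) to an appropriately chosen $\TT{n}^\tau$; the resulting module has a Verma flag whose top term is $\M{n}^\sigma$, and its character is computed (this is the Mathematica-assisted step in the paper). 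One then checks that, after subtracting off the lower-order tilting summands already known by induction, what remains has the claimed Verma character; indecomposability then follows because the lower-bound multiplicities from Proposition~\ref{prop:flags} already saturate the total character, leaving no room for a nontrivial direct-sum decomposition.

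I would organize the proof case by case following the five items, handling (1) as the generic case and then (2)--(5) as perturbations: (2) and (3) are the singular cases where one $W_2$-stabilizer acts, obtained by the same translation applied to item (1) but with a singular initial weight, so the $[e,\sigma]$ becomes $[e,\sigma]/\langle s_i\rangle$ and the condition $\ell(\sigma)>\ell(\sigma i)$ records which coset representative survives; (4) and (5) are the ``nearly singular'' cases adjacent to the singular orbit, where the branching of the translation functor produces two sub-cases according to whether $\ell(\sigma)$ is less than or greater than $\ell(\sigma i)$, the first sub-case carrying the extra layer $\M{\red k}^{[e,\sigma]}$ or $\M{\blue{3k+1}}^{[e,\sigma]}$ coming from part (5) of Proposition~\ref{prop:flags}. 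Throughout, the symbol calculus of Section~\ref{sec:blocks} is used to make the even/odd reflection bookkeeping mechanical.

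The hard part will be the upper-bound/indecomposability direction, specifically controlling the character of the module produced by the translation functor. In the $\D$ setting of \cite{CW17} this was manageable by hand, but for $G(3)$ the Verma filtrations are long (up to length $60$ in the principal block) and the translation functor branches in ways that are only transparent after explicit computation; the genuine conceptual obstacle is that without singular-vector formulae for odd reflections one cannot argue directly that certain Verma homomorphism composites are nonzero, so one must instead lean on the combination of (a) the Jantzen-sum-formula criterion Proposition~\ref{prop:flags} to get enough composition factors, and (b) Soergel duality \eqref{tiltingD} plus the Casimir constraint to bound them from above — and then verify these two bounds coincide. For the blocks $\Bl_k$ with $k\ge 1$ treated in this theorem the candidates have only two or three layers, so this coincidence is checkable; the genuinely pathological cases are deferred to $\Bl_0$ and $\Bl_1$ in Section~\ref{sec:charT0}.
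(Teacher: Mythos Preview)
Your proposal has the right ingredients (translation functors, Proposition~\ref{prop:flags}, Soergel duality) but assembles them differently from the paper, and one step is a genuine gap.

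\textbf{The key difference: the initial tilting module.} You propose to translate from an \emph{atypical} tilting $\TT{n}^\tau$ with $\tau<\sigma$ in the same block $\Bl_k$, inducting on the Bruhat order of $\sigma$. The paper instead translates from $\TT{g}$ with $g=f_n^\sigma-2\delta$, which is \emph{typical} (except when $n=3k-3$), so its Verma flag is already known by Gorelik's equivalence and Lemma~\ref{lem:KL:sing}. The payoff is that no subtraction of lower summands is needed: with the right choice of $g$ (the ``standard'' one $f_n^\sigma-2\delta$, or one of the listed nonstandard shifts $f_n^\sigma-(\delta\pm\ep_i)$ when $f_n^\sigma-2\delta$ happens to be atypical or otherwise unsuitable), the module $\E\TT{g}$ has Verma flag exactly equal to the right-hand side of the claimed formula. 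The Mathematica computation is then just evaluating the known character of $\TT{g}$ tensored with the adjoint module and projecting to $\Bl_k$. Your within-block induction could perhaps be made to work, but would require controlling which extra tilting summands appear at each step, which the paper entirely avoids.

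\textbf{The gap: the upper bound.} You write that the upper bound on multiplicities comes from ``the super Jantzen sum formula together with the Casimir/central-character constraint.'' This does not give what you need. The Casimir only fixes the block; the Jantzen sum formula bounds composition factors of a Verma module from \emph{below}, and via Soergel duality that translates into lower bounds on $(\TT{\la}:M_\mu)$ --- which is exactly what Proposition~\ref{prop:flags} already does. Neither tool produces an a~priori upper bound on the Verma flag of a tilting module. In the paper, the upper bound is not an estimate at all: it is the exact character of $\E\TT{g}$, which one knows because one knows $\TT{g}$ exactly. Proposition~\ref{prop:flags} is then used solely to prove that $\E\TT{g}$ is indecomposable (so that it equals $\TT{n}^\sigma$ rather than a direct sum). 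For the two-layer, multiplicity-free cases (1)--(3) this is immediate; for the three-layer cases (4)--(5), the paper runs through each $\sigma$ with $\ell(\sigma)<\ell(\sigma i)$, exhibits explicit odd roots $\beta,\gamma$ with $(\la,\beta)=(\la-\beta,\gamma)=0$ and $\hgt(\beta)<\hgt(\gamma)$ such that $\la-\beta-\gamma$ lands in the third layer, and invokes parts (5)--(6) of Proposition~\ref{prop:flags}. Your description of the height argument is correct, but it is serving indecomposability, not a multiplicity lower bound in the sense you describe.
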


\begin{proof}
Here and throughout the paper, we always apply a translation functor $\E$ by tensoring with the adjoint $\g$-module and projecting to the block $\Bl_k$. Most of the time, we apply $\E$ to an initial tilting module $\TT{g}$ with $g=f_n^\sigma-2\delta$ to obtain the desired formula for the tilting module $\TT{n}^\sigma=\TT{f_n^\sigma}$. We shall call such an initial tilting module $\TT{g}$ with $g=f_n^\sigma-2\delta$ {\it standard}; otherwise it is called {\it nonstandard}.

The initial tilting modules used to establish Parts (1)--(3) are all standard except in Part~ (1) when $n=3k-3$. Note that the weight $f^\sigma_n-2\delta$ is atypical if and only if $n=3k-3$ and $k\ge 2$. For Part (1) in this particular case of $n=3k-3$ we use the following the nonstandard initial tilting modules to obtain the formulae listed in the theorem:
\begin{itemize}
\item
For $\sigma =e,1,121,212,1212$, we take $\TT{f_{3k-3}^{\sigma}-(\delta-\ep_3)}$;
\item
For $\sigma =2,12,21,2121,21212$, we take $T_{f^{\sigma}_{3k-3}-(\delta+\epsilon_2)}$;
\item
For $\sigma =12121,\wl$, we take $T_{f^{\sigma}_{3k-3}-(\delta+\epsilon_1)}$.
\end{itemize}

The initial tilting modules used to establish Part (4) for $\ell(\sigma)<\ell(\sigma1)$ and $k\ge 2$ are all standard.
Nonstandard initial tilting modules in Part (4) are used in the following cases: % (the standard initial tilting modules are used unless otherwise specified):
\begin{enumerate}
\item[(4-i)] Assume that $\ell(\sigma)<\ell(\sigma1)$ and $\underline{k=1}$.
\begin{itemize}
\item  For $\sigma=e,2,12$, we take $\TT{f_{k-1}^\sigma-(\delta-\ep_3)}$.
\item  For $\sigma=212$, we take $\TT{f_{k-1}^\sigma-(\delta+\ep_2)}$.
\item  For $\sigma=1212$, we take $\TT{f_{k-1}^\sigma-(\delta+\ep_1)}$.
\item  For $\sigma=21212$, we take $\TT{f_{k-1}^\sigma-(\delta-\ep_1)}$.
\end{itemize}
\item[(4-ii)] Assume that $\ell(\sigma)>\ell(\sigma1)$, and let $k\ge 1$ be arbitrary.
\begin{itemize}
\item  For $\sigma=1$, we take $\TT{f_{k-1}^\sigma-(\ep_2-\ep_3)}$.
\item  For $\sigma=21$, we take $\TT{f_{k-1}^\sigma-(\delta+\ep_1)}$.
\item  For $\sigma=121,\wl$, we take $\TT{f_{k-1}^\sigma-(\delta+\ep_2)}$.
\item  For $\sigma=2121,12121$, we take $\TT{f_{k-1}^\sigma-(\delta-\ep_3)}$.
\end{itemize}
\end{enumerate}

Nonstandard initial tilting modules in Part (5) of are only needed in the case $\ell(\sigma)>\ell(\sigma2)$. They are as follows:
\begin{itemize}
\item  For $\sigma=2,212$, we take $\TT{f_{3k}^\sigma-(\delta+\ep_1)}$.
\item  For $\sigma=12$, we take $\TT{f_{3k}^\sigma-(\ep_2-\ep_3)}$.
\item  For $\sigma=1212$, we take $\TT{f_{3k}^\sigma-(\delta+\ep_2)}$.
\item  For $\sigma=21212,\wl$, we take $\TT{f_{3k}^\sigma-(\delta-\ep_3)}$.
\end{itemize}

In this way, with each suitable choice of $g$ we obtain a module $\E \TT{g}$ whose Verma flags are given exactly by the RHS of the formuae stated in the theorem. (This step in reality is the most tedious and time-consuming part of this work, and it requires extensive use of Mathematica, as it takes several attempts to arrive at the suitable nonstandard initial tilting modules.)

\vspace{3mm}
By construction, in each case of (1)--(5) $M_{f_n^\sigma}$ appears in  $\E \TT{g}$ as a highest term. Therefore in order to prove that  $\E \TT{g}= \TT{f_n^\sigma}$, it remains to show that  $\E \TT{g}$ is indecomposable. Our main technical tool is Proposition~\ref{prop:flags}.

We observe that every formula in (1)--(5) has 2 or 3 different indices, to which we shall refer as 2 or 3 {\it layers} (of Verma flags). Noting that formulae with 2 layers in (1)--(5) are all multiplicity free, we conclude quickly from Proposition~\ref{prop:flags} that the resulting module $\E \M{g}$ is indecomposable and hence must be the tilting module $\TT{f}$.

So it remains to prove that $\E M_g$ with formulae of 3 layers, which occur in (4)--(5), are indecomposable. In these cases, we caution that multiplicity 2 does occur for $\sigma$ with $\ell(\sigma)\ge 2$, as 2 Verma modules of singular highest weights among $M_{\red{k}}^{[e,\sigma]}$ (respectively, $M_{\blue{3k+1}}^{[e,\sigma]}$) can be identified.
Let us first deal with Part (4), while Part (5) is parallel.

(4). We now work with $\TT{k-1}^\sigma$.
The case with $\sigma=e$ is clear: $\E \TT{g}$ with 3 Verma flags must be indecomposable by Proposition~\ref{prop:flags}, and hence is $\TT{k-1}^e$.

In case of $\sigma =s_2$, by Proposition~\ref{prop:flags}, the four terms in the first 2 layers are flags in $\TT{k-1}^{2}$; but then the remaining 2 terms cannot form a tilting module or a direct sum of tilting modules, and we are done.

For $\sigma=12$, we note
\begin{itemize}
\item $(f^{12}_{k-1},\delta-\ep_2)=0$,
\item $(f^{12}_{k-1}-\delta+\ep_2,\delta-\ep_3)=0$,
\item $f^{12}_{k-1}-(\delta-\ep_2)-(\delta-\ep_3) = f^{12}_{k+1}$,
\item ${\rm ht}(\delta-\ep_2)<{\rm ht}(\delta-\ep_3)$.
\end{itemize}
Thus, by Proposition~\ref{prop:flags}(5) we have $(T^{12}_{k-1}:M^{12}_{k+1})>0$. By Proposition~\ref{prop:flags}(6) we also have $(T^{12}_{k-1}:M^{s_2}_{k+1})>0$, $(T^{12}_{k-1}:M^{s_1}_{k+1})>0$, and $(T^{12}_{k-1}:M^{e}_{k+1})>0$. Since all layer 3 terms are part of the tilting module, we are done.

For $\sigma=212$, we observe that
\begin{itemize}
\item $(f^{212}_{k-1},\delta+\ep_3)=0$,
\item $(f^{212}_{k-1}-\delta\ep_3,\delta+\ep_2)=0$,
\item $f^{212}_{k-1}-(\delta+\ep_2)-(\delta+\ep_3) = f^{212}_{k+1}$,
\item ${\rm ht}(\delta+\ep_3)<{\rm ht}(\delta+\ep_2)$.
\end{itemize}
Using Proposition~\ref{prop:flags}(5) we have that $(T^{212}_{k-1}:M^{212}_{k+1})>0$, which implies that all layer 3 terms are part of the tilting module by Proposition~\ref{prop:flags}(6).

For $\sigma=1212$ we note
\begin{itemize}
\item $(f^{1212}_{k-1},\delta+\ep_3)=0$,
\item $(f^{1212}_{k-1}-(\delta+\ep_3),\delta+\ep_1)=0$,
\item $f^{1212}_{k-1}-(\delta+\ep_3)-(\delta+\ep_1) = f^{1212}_{2}$,
\item ${\rm ht}(\delta+\ep_3)<{\rm ht}(\delta+\ep_1)$.
\end{itemize}
The same argument as above by Proposition~\ref{prop:flags} shows that all layer 3 terms are part of the tilting module.

For $\sigma=21212$ we note
\begin{itemize}
\item $(f^{21212}_{k-1},\delta-\ep_2)=0$,
\item $(f^{21212}_{k-1}-(\delta-\ep_2),\delta-\ep_1)=0$,
\item $f^{21212}_{k-1}-(\delta-\ep_2)-(\delta-\ep_1) = f^{21212}_{2}$,
\item ${\rm ht}(\delta-\ep_2)<{\rm ht}(\delta-\ep_1)$.
\end{itemize}
The same argument as above using Proposition~\ref{prop:flags} shows that all layer 3 terms are part of the tilting module.

\vspace{2mm}

(5). We now work with $\TT{3k}^\sigma$.
The case with $\sigma=e$ is clear: $\E \TT{g}$ with 3 Verma flags must be indecomposable and hence is $\TT{3k}^e$.

In case of $\sigma =s_1$, by Proposition~\ref{prop:flags},  the first four terms in the first 2 layers are flags in $\TT{3k}^{1}$; but then the remaining 2 terms cannot form a tilting module or a direct sum of tilting modules, and we are done.

For $\sigma =21$, we note
\begin{itemize}
\item $(f^{21}_{3k},\delta-\ep_1)=0$,
\item $(f^{21}_{3k}-\delta+\ep_1,\delta+\ep_2)=0$,
\item $f^{21}_{3k}-(\delta-\ep_1)-(\delta+\ep_2) = f^{21}_{3k+2}$,
\item ${\rm ht}(\delta-\ep_1)<{\rm ht}(\delta+\ep_2)$.
\end{itemize}
Thus, by Proposition~\ref{prop:flags}(5) we have $(T^{21}_{3k}:M^{21}_{3k+2})>0$. By Proposition~\ref{prop:flags}(6) we also have $(T^{21}_{3k}:M^{s_2}_{3k+2})>0$, $(T^{21}_{3k}:M^{s_1}_{3k+2})>0$, $(T^{21}_{3k}:M^{e}_{3k+2})>0$. Since all layer 3 terms are part of the tilting module, we are done.

For $\sigma=121$ we observe that
\begin{itemize}
\item $(f^{121}_{3k},\delta-\ep_2)=0$,
\item $(f^{121}_{3k}-\delta+\ep_2,\delta+\ep_1)=0$,
\item $f^{121}_{3k}-(\delta-\ep_2)-(\delta+\ep_1) = f^{21}_{3k+2}$,
\item ${\rm ht}(\delta-\ep_2)<{\rm ht}(\delta+\ep_1)$.
\end{itemize}
Using Proposition~\ref{prop:flags}(5) we have that $(T^{121}_{3k}:M^{121}_{3k+2})>0$, which implies that all layer 3 terms are part of the tilting module by Proposition~\ref{prop:flags}(6).

%In light of the results of Section \ref{sec:non-a1} it remains to prove Part (4) for $\sigma=1212,21212$ and Part (5) for $\sigma=2121,12121$. This follows from the following:

For $\sigma=2121$ we note
\begin{itemize}
\item $(f^{2121}_{3k},\delta+\ep_3)=0$,
\item $(f^{2121}_{3k}-(\delta+\ep_3),\delta-\ep_1)=0$,
\item $f^{2121}_{3k}-(\delta+\ep_3)-(\delta-\ep_1) = f^{2121}_{2}$,
\item ${\rm ht}(\delta+\ep_3)<{\rm ht}(\delta-\ep_1)$.
\end{itemize}
The same argument as above by Proposition~\ref{prop:flags} shows that all layer 3 terms are part of the tilting module.

For $\sigma=12121$ we note
\begin{itemize}
\item $(f^{12121}_{3k},\delta+\ep_3)=0$,
\item $(f^{12121}_{3k}-(\delta+\ep_3),\delta-\ep_2)=0$,
\item $f^{12121}_{3k}-(\delta+\ep_3)-(\delta-\ep_2) = f^{12121}_{2}$,
\item ${\rm ht}(\delta+\ep_3)<{\rm ht}(\delta-\ep_2)$.
\end{itemize}
The same argument as above by Proposition~\ref{prop:flags} shows that all layer 3 terms are part of the tilting module.

The theorem is proved.
\end{proof}

%\begin{rem}
We remark that the length of the Verma filtration of the tilting module in Part~(1) of Theorem~\ref{thm:k=1+sigma} is $4\ell(\sigma)$;  the lengths of the Verma filtrations in the tilting modules in Parts (2)-(3) are $3\ell(\sigma)$; the lengths of Verma filtrations in Part~(4) are $6\ell(\sigma)$ and $3\ell(\sigma)$, respectively;  same for Part~ (5).
%\end{rem}

%%%%%%
\subsection{Formulae for $\TT{\red{0}}^{\sigma}$ in the block $\Bl_0$}

Theorem~\ref{thm:k=1+sigma} covers all blocks $\Bl_k$ with $k\ge 1$ and part of block $\Bl_0$.
The only missing case not covered by Theorem~\ref{thm:k=1+sigma}, $\TT{\red{0}}^{\sigma}$ in the block $\Bl_0$, will be treated in the following.

\begin{thm}  [Block $\Bl_0$]
   \label{thm:k=0:sigma}
The following formulae hold for tilting modules in the block $\Bl_0$: for $\sigma \in W_2$ such that $\ell(\sigma) >\ell(\sigma 1)$,
\begin{align}
\TT{\red{0}}^{\sigma 1} &=\TT{\red{0}}^{\sigma }
=  \M{\red{0}}^{[e,\sigma]/\langle s_1\rangle} +\M{\blue{1}}^{[e,\sigma ]} +\M{2}^{[e,\sigma ]} \text{ for }\sigma\not= \wl,
 \\
\TT{\red{0}}^{21212}&=\TT{\red{0}}^{\wl}
=  \M{\red{0}}^{[e,\wl]/\langle s_1\rangle} +\M{\blue{1}}^{[e,\wl]/\langle s_2\rangle}.
\label{k=0:0wl}
\end{align}
\end{thm}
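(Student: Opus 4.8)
The plan is to follow the same strategy used throughout Section~\ref{sec:charT}: apply the translation functor $\E$ to a suitably chosen initial tilting module, check that the resulting module has the Verma flag displayed on the right-hand side with $\M{\red{0}}^\sigma$ (equivalently $\M{\red{0}}^{\sigma1}$) occurring as a highest term, and then verify indecomposability via Proposition~\ref{prop:flags}. For the first formula, the natural candidate initial tilting modules are $\TT{f_{\red{0}}^{\sigma}-2\delta}$ (the standard choice) whenever the shifted weight is typical, with nonstandard choices of the form $\TT{f_{\red{0}}^{\sigma}-\beta}$ for an appropriate $\beta\in\Phi^+_{\bar 1}$ in the remaining (low-length) cases; these are found by Mathematica experimentation exactly as in the proof of Theorem~\ref{thm:k=1+sigma}. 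For the exceptional equation~\eqref{k=0:0wl}, where $\sigma=\wl$, a separate initial tilting module must be selected so that the output has exactly the two layers $\M{\red{0}}^{[e,\wl]/\langle s_1\rangle}$ and $\M{\blue{1}}^{[e,\wl]/\langle s_2\rangle}$, with the second layer being \emph{shorter} than one might naively expect (the $\M{2}$-layer collapses) — reflecting that $f_{0,2}^{\wl}$ coincides with a singular weight; this collapse has to be read off from the weight–symbol dictionary of Section~\ref{sec:blocks}.

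The key steps, in order, are: (i) fix $k=0$ and write out, using the symbol formulae \eqref{symbol} and the $W_2$-action $s_1(x,y,z)=(y,x,z)$, $s_2(x,y,z)=(-x,-z,-y)$, the explicit symbols $f_{0,0}^\sigma$, $f_{0,1}^\sigma$, $f_{0,2}^\sigma$ for each $\sigma\in W_2$, identifying which cosets in $W_2/\langle s_1\rangle$ and $W_2/\langle s_2\rangle$ give genuinely distinct weights; (ii) for each $\sigma$ with $\ell(\sigma)>\ell(\sigma1)$, record the chosen initial tilting module $\TT{g}$, whose Verma flag is already known from Lemma~\ref{lem:KL:sing} or from earlier parts of Theorem~\ref{thm:k=1+sigma}, and compute $\ch \E\TT{g}$ by the standard translation-functor bookkeeping (tensoring the known Verma flag with the character of the $31$-dimensional adjoint module and projecting to $\Bl_0$); (iii) confirm that $\ch \E\TT{g}$ equals the claimed right-hand side and that $\M{\red{0}}^\sigma$ is a highest term; (iv) prove indecomposability. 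For step (iv) the three-layer formula is handled exactly as the three-layer cases in Theorem~\ref{thm:k=1+sigma}: Proposition~\ref{prop:flags}(1)--(2) forces the first two layers $\M{\red{0}}^{[e,\sigma]/\langle s_1\rangle}+\M{\blue{1}}^{[e,\sigma]}$ to lie in $\TT{\red{0}}^\sigma$, and then one exhibits an odd root $\beta$ with $(f_{\red{0}}^\sigma,\beta)=0$ and a second odd root $\gamma$ with $(f_{\red{0}}^\sigma-\beta,\gamma)=0$, $\hgt(\beta)<\hgt(\gamma)$, and $f_{\red{0}}^\sigma-\beta-\gamma=f_{2}^\sigma$, so that Proposition~\ref{prop:flags}(5)--(6) drags the entire third layer $\M{2}^{[e,\sigma]}$ into $\TT{\red{0}}^\sigma$; since all Verma subquotients of $\E\TT{g}$ are thereby accounted for inside a single indecomposable summand, $\E\TT{g}=\TT{\red{0}}^\sigma$. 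For the two-layer case \eqref{k=0:0wl} one similarly uses Proposition~\ref{prop:flags}(1)--(2) to absorb the $\M{\blue{1}}$-layer, and the multiplicity-free (once coset identifications are taken into account) two-layer structure then forces indecomposability immediately.

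I expect the main obstacle to be step (ii)--(iii) in the $\sigma=\wl$ case, i.e.\ pinning down the correct nonstandard initial tilting module so that $\E\TT{g}$ produces precisely the truncated two-layer answer \eqref{k=0:0wl} rather than a three-layer module (which would then fail to be indecomposable, or would be the wrong module). This is the ``most tedious and time-consuming'' part already flagged in the proof of Theorem~\ref{thm:k=1+sigma}: there is no a priori recipe, and one must search among the $\TT{f_{\red{0}}^{\wl}-\beta}$ for various odd or even roots $\beta$ until the translated character matches. A secondary subtlety is the careful handling of the singular weights: because $f_{0,0}$ has stabilizer $\langle s_1\rangle$ and $f_{0,3k+1}=f_{0,1}$ has stabilizer $\langle s_2\rangle$, several ``different'' cosets index the same Verma module, so multiplicity $2$ can silently appear among the $\M{\red{0}}^{[e,\sigma]/\langle s_1\rangle}$ or $\M{\blue{1}}^{[e,\sigma]/\langle s_2\rangle}$ terms; one must make sure Proposition~\ref{prop:flags} is applied to genuinely distinct weights and that the indecomposability argument still closes despite these coincidences. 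Everything else is routine once the initial tilting modules are identified.
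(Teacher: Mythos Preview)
Your proposal is essentially the paper's own approach, and would go through. A few small corrections are worth noting. First, for the three-layer formula the paper uses the \emph{standard} initial tilting module $\TT{f_{\red 0}^{\sigma}-2\delta}$ for every $\sigma\neq\wl$; the shifted weight is always typical (its symbol has first coordinate $-5/2$, which never matches any entry of a signed permutation of $(-1/2,-1/2,1)$), so no nonstandard choices are needed in the low-length cases. Second, your explanation of the $\sigma=\wl$ collapse is off: $f_{0,2}^{\wl}$ is a \emph{regular} weight, not singular, so nothing ``collapses'' by coincidence of weights; the two-layer answer arises simply because the nonstandard initial module the paper picks, $\TT{g}$ with $g=f_{\red 0}^{21212}-(\delta-\ep_3)\sim[-3/2\,|\,0,0,0]$, already produces exactly that character after translation. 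Third, passing from the layer-$0$ Verma terms to the layer-$1$ terms requires Proposition~\ref{prop:flags}(3)--(4) (an odd root), not (1)--(2); parts (1)--(2) only move you around inside a single $W$-orbit. With these adjustments your outline matches the paper: standard translations plus Proposition~\ref{prop:flags}(5)--(6) handle the three-layer cases (the paper treats $\sigma=e$ by an ad hoc ``leftover terms cannot be tilting'' argument, but your uniform $(\beta,\gamma)$ method works there too), and the two-layer $\wl$ case is multiplicity-free and immediately indecomposable by Proposition~\ref{prop:flags}(3)--(4).
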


\begin{proof}
We first prove the formula \eqref{k=0:0wl} for $\TT{\red{0}}^{21212}$, which requires a nonstandard initial tilting module.
By applying the translation functor $\E$ on the nonstandard initial tilting module of highest weight $g=f^{21212}_0-(\delta-\ep_3)$, which corresponds to the symbol $[-3/2|0,0,0]$, we obtain the module $\E \TT{g}$ whose Verma flags are given on the RHS of the formula for $\TT{\red{0}}^{21212}$ in the theorem.
Since $\E \TT{g}$ is multiplicity free with 2 layers and hence indecomposable by Proposition~ \ref{prop:flags},  it must be $\TT{\red{0}}^{21212}$.

The explicit formulae for $\TT{\red{0}}^{\sigma}$ with $\sigma \neq 21212$ are obtained by applying a suitable translation functor $\E$ to the standard initial tilting modules. It remains to show that $\E \TT{g}$ is indecomposable, and we provide the details case-by-case below.

For $\sigma=e$, by Proposition~\ref{prop:flags} both $\M{\blue 1}^{s_1}$ and $\M{\blue 1}^e$ appear in $\TT{\red 0}^{e}$, while the last two terms cannot form a tilting module or a direct sum of tilting modules.

For $\sigma=s_2$, We note
\begin{itemize}
\item $(f^{s_2}_0,\delta-\ep_1)=0$,
\item $(f^{s_2}_0-\delta+\ep_1,\delta+\ep_2)=0$,
\item $f^{s_2}_0-(\delta-\ep_1)-(\delta+\ep_2) = f^{21}_2$,
\item ${\rm ht}(\delta-\ep_1)<{\rm ht}(\delta+\ep_2)$.
\end{itemize}
Thus, by Proposition~\ref{prop:flags}(5) we have $(T^{s_2}_{\red{0}}:M^{21}_2)>0$. By Proposition~\ref{prop:flags}(6) we also have $(T^{s_2}_{\red{0}}:M^{s_2}_2)>0$, $(T^{s_2}_{\red{0}}:M^{s_1}_2)>0$, $(T^{s_2}_{\red{0}}:M^{e}_2)>0$. Since all layer 3 terms are part of the tilting module, we are done.

For $\sigma=12$, we note
\begin{itemize}
\item $(f^{12}_{0},\delta-\ep_2)=0$,
\item $(f^{12}_{0}-(\delta-\ep_2),\delta+\ep_1)=0$,
\item $f^{12}_{0}-(\delta-\ep_2)-(\delta+\ep_1) = f^{121}_{2}$,
\item ${\rm ht}(\delta-\ep_2)<{\rm ht}(\delta+\ep_1)$.
\end{itemize}
Thus, by Proposition~\ref{prop:flags}(5) we have $(T^{12}_{\red 0}:M^{121}_{2})>0$. By Proposition~\ref{prop:flags}(6) all layer 3 terms are part of the tilting module.

For $\sigma=212$, we note
\begin{itemize}
\item $(f^{212}_{0},\delta+\ep_3)=0$,
\item $(f^{212}_{0}-(\delta+\ep_3),\delta-\ep_1)=0$,
\item $f^{212}_{0}-(\delta+\ep_3)-(\delta-\ep_1) = f^{2121}_{2}$,
\item ${\rm ht}(\delta+\ep_3)<{\rm ht}(\delta-\ep_1)$.
\end{itemize}
Thus, by Proposition~\ref{prop:flags}(5) we have $(T^{212}_{\red 0}:M^{2121}_{2})>0$. By Proposition~\ref{prop:flags}(6) all layer 3 terms are part of the tilting module.

For $\sigma=1212$, we note
\begin{itemize}
\item $(f^{1212}_{0},\delta+\ep_3)=0$,
\item $(f^{1212}_{0}-(\delta+\ep_3),\delta-\ep_2)=0$,
\item $f^{1212}_{0}-(\delta+\ep_3)-(\delta-\ep_2) = f^{12121}_{2}$,
\item ${\rm ht}(\delta+\ep_3)<{\rm ht}(\delta-\ep_2)$.
\end{itemize}
Thus, by Proposition~ \ref{prop:flags}(5) we have $(T^{1212}_{\red{0}}:M^{12121}_{2})>0$. By Proposition~ \ref{prop:flags}(6) all layer 3 terms are part of $T^{1212}_{\red{0}}$, and hence the formula for $\sigma=1212$ follows.

The theorem is proved.
\end{proof}
We note that the length of the Verma filtration for the tilting module $\TT{\red{0}}^{\sigma}$ (for $\sigma \neq \wl$) in Theorem~\ref{thm:k=0:sigma} is $5\ell(\sigma)$, while the length of the Verma filtration in the tilting module $\TT{\red{0}}^{\wl}$ in the formula \eqref{k=0:0wl}   is $2\ell(\wl)=12$.

\begin{rem}
 \label{rem:various}
\begin{enumerate}
\item
In Theorem~\ref{thm:k=1+sigma}(4)-(5) and Theorem~\ref{thm:k=0:sigma}, the $w$ appearing in $\M{\red{k}}^w$ or $\M{\blue{3k+1}}^w$ (of singular highest weights) may not be of minimal length in $W_2/\langle s_i \rangle$. A multiplicity 2 could occur after such terms are rewritten uniformly via minimal length elements in $W_2/\langle s_i \rangle$. For example, Theorem~\ref{thm:k=1+sigma}(4) with $\sigma =12$ reads
\[
\TT{k-1}^{12} =
\M{k-1}^{[e,12]} + 2 \M{\red{k}}^{e} +  \M{\red{k}}^{\{2,12\}} +\M{k+1}^{[e,12]}.
\]

%\item
%$\TT{k-1}^\sigma$ (for $k>1$) and $\TT{3k}^\sigma$ (for $k\ge 1$) have regular weights, and their outputs are from the usual generic files. But $\TT{k-1}^\sigma$ (with $k=1$) fits into the file of $\TT{3k-3}^\sigma$ (with $k=1$).

%\item
%The formula for $\TT{\blue{1}}^\sigma$ for $k=0$ fits with the formula for $\TT{\blue{3k+1}}^\sigma$ for general $k\ge 0$. The formula for generic $n$ in case $k=0$ fits well with the formula for generic $n$ in case $k\ge 1$.

\item
Note $f^\sigma_n -2\delta$ is atypical if and only $n=3k-3$, and we have used non-standard initial tilting modules to obtain $T^{\sigma}_{3k-3}$ for $k\ge 2$. In spite of this the formulae for $\TT{3k-3}^\sigma$ fit into the generic formulae given in Theorem~\ref{thm:k=1+sigma}(1).
\end{enumerate}
\end{rem}

%%%%%%
%%%%%%
\section{Character formulae for tilting modules in $\OO$, II}
  \label{sec:charT0}

In this section, we provide formulae for Verma flags of tilting modules $T^{0\sigma}_n$, for $\sigma \in W_2$, in all blocks $\Bl_k$ with the exception of one particular tilting module in $\Bl_0$.

\subsection{Formulae for $\TT{n}^{0\sigma}$ in the blocks $\Bl_k$}

%The following is the counterpart of Theorems~\ref{thm:k=1+sigma} and \ref{thm:k=0:sigma} on characters for tilting modules.
In the formulae below, recalling the Weyl group $W_2$ of $G_2$, we denote by $A_1$ the Weyl group of $\sll$ and so $W=A_1\times W_2$; moreover, the superscript $0$ denotes $s_0 \in A_1$. Recall the shorthand notation $\M{n}^D$, for $D\subseteq W$, from \eqref{MnD}.

\begin{thm}
  \label{thm:0:k=2+sigma}
The following formulae hold for tilting modules in the block $\Bl_k$: for $\sigma\in W_2$,
\begin{enumerate}
%1
  \item
$\TT{n}^{0\sigma} =  \M{n}^{A_1\times [e,\sigma]}  +\M{n-1}^{A_1\times [e,\sigma]},
\quad \forall n \in \N \backslash \{0,\red{k}, k+1, \blue{3k+1}, 3k+2\}, \qquad (k\ge 0).
$
%2
\item
$ \TT{0}^{0\sigma} =
\begin{cases}
\M{0}^{A_1\times [e, \sigma]}+\M{0}^{[e,\sigma]2} +\M{1}^{[e, \sigma]},
& \text{ if } \ell(\sigma)<\ell(\sigma 2),
\\
\M{0}^{A_1\times [e, \sigma]},
& \text{ if } \ell(\sigma)>\ell(\sigma 2)
\qquad (k\ge 2).
\end{cases}
$
%3
\item
$ \TT{\red{k}}^{0 \sigma 1} =\TT{\red{k}}^{0\sigma}
=  \M{\red{k}}^{A_1\times [e,\sigma]/\langle s_1\rangle} +\M{k-1}^{A_1\times [e,\sigma ]}$,
\quad if $\ell(\sigma) >\ell(\sigma 1), \qquad (k\ge 1)$.
%4
\item
$ \TT{\blue{3k+1}}^{0\sigma 2} =\TT{\blue{3k+1}}^{0\sigma}
=  \M{\blue{3k+1}}^{A_1\times [e,\sigma]/\langle s_2\rangle} +\M{3k}^{A_1\times [e,\sigma ]}$,
if $\ell(\sigma) >\ell(\sigma 2), \qquad (k\ge 1). $
%5
\item
$ \TT{k+1}^{0\sigma} =
\begin{cases}
\M{k+1}^{A_1\times [e,\sigma]} +\M{\red{k}}^{A_1\times [e,\sigma]} +\M{{k-1}}^{A_1\times [e,\sigma]},
& \text{ if } \ell(\sigma)<\ell(\sigma 1),
\\
\M{k+1}^{A_1\times [e,\sigma]} +\M{\red{k}}^{A_1\times [e,\sigma]/\langle s_1\rangle},
& \text{ if } \ell(\sigma)>\ell(\sigma 1),
\qquad (k\ge 1).
\end{cases}
$
%6
\item
$ \TT{3k+2}^{0\sigma} =
\begin{cases}
\M{3k+2}^{A_1\times [e,\sigma]} +\M{\blue{3k+1}}^{A_1\times [e,\sigma]} +\M{3k}^{A_1\times [e,\sigma]},
& \text{ if } \ell(\sigma)<\ell(\sigma 2),
\\
\M{3k+2}^{A_1\times [e,\sigma]} +\M{\blue{3k+1}}^{A_1\times [e,\sigma]/\langle s_2\rangle},
& \text{ if } \ell(\sigma)>\ell(\sigma 2),
\qquad (k\ge 1).
\end{cases}
$
\end{enumerate}
\end{thm}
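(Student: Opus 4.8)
\emph{Strategy.} The proof runs in complete parallel with that of Theorem~\ref{thm:k=1+sigma}. For each $\sigma\in W_2$ and each $n$ in the indicated range, the plan is to realize $\TT{n}^{0\sigma}$ as $\E\TT{g}$, where $\E$ is the translation functor (tensoring with the $31$-dimensional adjoint module and projecting onto $\Bl_k$) and $\TT{g}$ is a suitably chosen initial tilting module. The default choice is the \emph{standard} one $g=f_n^{0\sigma}-2\delta$, which works whenever $f_n^{0\sigma}-2\delta$ is typical: in that case the Verma flag of $\TT{g}$ is already known, either from Gorelik's typical equivalence together with Lemma~\ref{lem:KL:sing}, or, when $g$ is atypical but has already been handled, by induction along the Bruhat order of $W$ using Theorem~\ref{thm:k=1+sigma} and the preceding parts of the present theorem. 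In the few exceptional cases where $f_n^{0\sigma}-2\delta$ is atypical (paralleling the $n=3k-3$ phenomenon in Theorem~\ref{thm:k=1+sigma}), one must instead feed $\E$ a \emph{nonstandard} initial tilting module $\TT{f_n^{0\sigma}-\mu}$ for a suitable $\mu\in\Phi^+$, the correct $\mu$'s being located case by case with Mathematica. In every instance, a lengthy machine-assisted computation shows that $\E\TT{g}$ has Verma flag exactly the right-hand side of the asserted formula, with $\M{f_n^{0\sigma}}$ occurring as the Verma module of maximal highest weight and with multiplicity one. It then remains only to show that $\E\TT{g}$ is indecomposable; since $\E\TT{g}$ automatically carries a Verma flag and a dual Verma flag, indecomposability forces $\E\TT{g}=\TT{n}^{0\sigma}$.

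\emph{Indecomposability, two-layer cases.} The formulae in (1), (3), (4), in both branches of (2), in the branch $\ell(\sigma)>\ell(\sigma 1)$ of (5), and in the branch $\ell(\sigma)>\ell(\sigma 2)$ of (6) are multiplicity-free and involve only two layers (two $W$-orbits among the highest weights). In each, starting from the top weight $f_n^{0\sigma}$, every other Verma subquotient is reached either by a chain of dominant even reflections, or by first subtracting an isotropic odd root $\beta$ with $(f_n^{0\sigma},\beta)=0$ and then applying such a chain. Hence Proposition~\ref{prop:flags}(1)--(4) places each of these Verma modules inside $\TT{f_n^{0\sigma}}$; since $\E\TT{g}$ has exactly the predicted total Verma length, it must equal $\TT{n}^{0\sigma}$, and is therefore indecomposable.

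\emph{Indecomposability, three-layer cases.} These are the branch $\ell(\sigma)<\ell(\sigma 1)$ of (5) and the branch $\ell(\sigma)<\ell(\sigma 2)$ of (6); they have three layers, and for $\ell(\sigma)\ge 2$ the singular middle layer $\M{\red k}^{\bullet}$ (resp.\ $\M{\blue{3k+1}}^{\bullet}$) acquires multiplicity $2$ once rewritten via minimal-length coset representatives, as in Remark~\ref{rem:various}. Exactly as in the proof of Theorem~\ref{thm:k=1+sigma}(4)--(5), for each such $\sigma$ I would exhibit a chain
\[
f_n^{0\sigma}\ \longrightarrow\ f_n^{0\sigma}-\beta\ \longrightarrow\ f_n^{0\sigma}-\beta-\gamma\ \longrightarrow\ \cdots\ \longrightarrow\ (\text{a lowest-layer weight}),
\]
with $\beta,\gamma$ among the isotropic odd roots $\delta\pm\ep_i$ satisfying $(f_n^{0\sigma},\beta)=0$, $(f_n^{0\sigma}-\beta,\gamma)=0$ and $\hgt(\beta)<\hgt(\gamma)$, the trailing arrows being dominant even reflections. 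Proposition~\ref{prop:flags}(5)--(6) then forces every Verma module of the lowest layer into $\TT{f_n^{0\sigma}}$; as these occur with multiplicity one in $\E\TT{g}$ they are entirely accounted for by the summand $\TT{f_n^{0\sigma}}$, and a short inspection of the candidate flags (each $\TT{\mu}$ with $\mu$ in the top two layers, other than the anti-dominant ones, would itself carry a lowest-layer Verma module) rules out any further direct summand. Hence $\E\TT{g}=\TT{n}^{0\sigma}$.

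\emph{Main obstacle.} The conceptual ingredient, indecomposability, is dispatched uniformly by Proposition~\ref{prop:flags}. The real difficulty is computational and twofold: (a) pinning down the correct nonstandard initial tilting modules in the atypical cases, which, as for Theorem~\ref{thm:k=1+sigma}, requires several rounds of Mathematica experimentation; and (b) verifying, for each $\sigma$ and $n$, that $\E$ applied to the chosen initial tilting module produces precisely the stated Verma flag. I expect (a) to be the bottleneck.
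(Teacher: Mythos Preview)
Your overall strategy coincides with the paper's, and your treatment of Parts~(1), (3), (4), and the short branches of (5)--(6) is adequate. There are, however, two concrete gaps.

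\medskip
\textbf{The first branch of (2) is neither multiplicity-free nor covered by your two-layer recipe.} For $\sigma$ with $\ell(\sigma)\ge 2$ (say $\sigma=s_2s_1$) the sets $[e,\sigma]$ and $[e,\sigma]s_2$ overlap inside $W_2$, so several $\M0^{\tau}$ occur with multiplicity~$2$; your ``multiplicity-free'' claim is false there. More seriously, your recipe ``subtract one isotropic odd root orthogonal to $f_0^{0\sigma}$, then apply dominant even reflections'' cannot reach the layer-$1$ terms $\M1^{[e,\sigma]}$: the first symbol coordinate of $f_0^{0\sigma}$ is $\tfrac12$, an isotropic odd subtraction brings it to $-\tfrac12$, and no chain of even reflections (including $s_0$) can then land on $-\tfrac32$. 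The paper instead runs the steps in the \emph{other} order: first the even reflection $s_{2\delta}$ (the half-integer term in the Jantzen sum) takes $f_0^{0\sigma}$ to $f_0^{\sigma}$, and then one finds an isotropic $\beta$ with $(f_0^{\sigma},\beta)=0$, $f_0^{\sigma}-\beta=f_1^{\sigma}$, and $\hgt(\beta)>\hgt(\delta)$. This last height inequality is what makes the Musson-module argument of Proposition~\ref{prop:flags}(5) go through (the weight $-f_0^{0\sigma}$ sits only $\delta$ below $-f_0^{\sigma}$, hence does not appear in any $M_{-f_0^{\sigma}-m\beta}$ for $m\ge 1$). You should either spell out this ``even then odd with height control'' variant, or rework your chain so that it actually reaches $f_1^{\sigma}$.

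\medskip
\textbf{Nonstandard initial tilting modules are needed far more often than you indicate.} You say nonstandard $g$ is required only when $f_n^{0\sigma}-2\delta$ is atypical. In fact the paper uses nonstandard initials throughout the second branch of (2), throughout the $\ell(\sigma)>\ell(\sigma1)$ branch of (5) (for every $k\ge 1$), in the $\ell(\sigma)<\ell(\sigma1)$ branch of (5) for $k=1$, and throughout the second branch of (6). In all these cases $f_n^{0\sigma}-2\delta$ is typical, yet the standard translation produces a module strictly larger than the desired $\TT{n}^{0\sigma}$, and one must either split off an explicit extra summand or, as the paper does, choose a better $g$ from the outset (e.g.\ $g=f_{k+1}^{0\sigma}-(\delta+\ep_2)$ for $\sigma=1,2121$ in Part~(5)). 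Your claim that ``the default choice works whenever $f_n^{0\sigma}-2\delta$ is typical'' is therefore incorrect, and this is precisely where the bulk of the case-by-case Mathematica work lies.
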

%\red{-- Item (4) for $k=0$ has been moved to Theorem~\ref{thm:0:k=0:sigma}}

%\red{-- Item (5) now includes $k=1$ from Theorem~\ref{thm:0:k=1:sigma}}

\begin{proof}
(1). Part (1) follows immediately from Lemma \ref{jantzen:sum} once we show that a translation functor applied to the standard initial tilting module (i.e., with highest weight shifted down by $2\delta$) produces the formula on the right hand side except for the case $n\not=3k+5$. Note that the weight $f^{0\sigma}_n -2\delta$ is atypical if and only $n=3k+5$ or $``k=0,n=2"$. The tilting modules $\TT{n}^{0\sigma}$ for $n=3k+5$ are obtained by translating from the following nonstandard initial tilting modules:
\begin{itemize}
\item For $\sigma=e,1,12,21212,121212$, we take $T_{f^{0\sigma}_{3k+5}-(\delta+\ep_2)}$;
\item For $\sigma=2,21,121,212,1212,2121,12121$, we take $T_{f^{0\sigma}_{3k+5}-(\delta-\ep_3)}$.
\end{itemize}

(2). To prove the first identity in Part (2), we apply the translation functor to the standard initial tilting module. We note that we can first subtract the weight $f_0^{0\sigma}$ by $\delta$ and then we can find an odd positive root $\beta$ such that $\text{ht}\beta>\text{ht}\delta$ with $\langle f_0^\sigma,\beta\rangle=0$ and $f_0^{\sigma}-\beta=f_1^{\sigma}$. This shows that $M^{[e,\sigma]}_1$ is a part of $\TT{0}^{0\sigma}$. From this and Proposition~\ref{prop:flags} we deduce that the tilting module has the claimed Verma flag structure.

The second formula of (2) is obtained  by applying translation functors  to the following nonstandard initial tilting modules:
\begin{itemize}
\item  For $\sigma=212,1212$, we take $\TT{f_{0}^{0\sigma} -(\delta+\ep_2)}$;
\item  For $\sigma=2,12,21212,121212$, we take $\TT{f_{k+1}^{0\sigma} -(\delta-\ep_3)}$.
\end{itemize}

(3). The formulae in  (3) are obtained  by applying the translation to standard initial tilting modules. Indecomposability is straightforward using Proposition~\ref{prop:flags} as the formulae are multiplicity free and of 2 layers only.

(4). The formulae in  (4) are obtained  by applying the translation to standard initial tilting modules. Indecomposability is straightforward using Proposition~\ref{prop:flags} as the formulae are multiplicity free and of 2 layers only.

(5).
The initial tilting modules used to establish Part (5) for $\ell(\sigma)<\ell(\sigma1)$ and $k\ge 2$ are all standard.
Nonstandard initial tilting modules in Part (5) are used as follows:
%\magenta{For $k=1$, this was part of the old proof of Theorem~\ref{thm:0:k=1:sigma} here.}
\begin{enumerate}
\item[(5-i)] Assume that $\ell(\sigma)<\ell(\sigma1)$ and $\underline{k=1}$.
\begin{itemize}
\item  For $\sigma=e,2$, we take $\TT{f_{k+1}^{0\sigma}-\delta}$;
\item  For $\sigma=12, 21212$, we take $\TT{f_{k+1}^{0\sigma} -(\delta+\ep_2)}$;
\item  For $\sigma=212, 1212$, we take $\TT{f_{k+1}^{0\sigma} -(\delta-\ep_3)}$.
\end{itemize}
\item[(5-ii)] Assume that $\ell(\sigma)>\ell(\sigma1)$, and let $k\ge 1$ be arbitrary.
\begin{itemize}
\item  For $\sigma=1,2121$, we take $\TT{f_{k+1}^{0\sigma}-(\delta+\ep_2)}$;
\item  For $\sigma=21,121$, we take $\TT{f_{k+1}^{0\sigma} -(\delta-\ep_3)}$;
\item  For $\sigma=12121$, we take $\TT{f_{k+1}^{0\sigma} -(\delta+\ep_1)}$;
\item  For $\sigma=\wl$ we take $\TT{f_{k+1}^{0\sigma} -(\delta-\ep_1)}$.
\end{itemize}
\end{enumerate}
Indecomposability for the tilting modules in the second formula of (5) with two layers is clear by Proposition~\ref{prop:flags}.

For the tilting modules in the first formula of (5), one shows that for these weights $f^{o\sigma}_{k+1}$ we have $(f^{0\sigma}_{k+1},\alpha)=0$ and $(f^{0\sigma}_{k+1}-\alpha,\beta)$ for some $\alpha,\beta\in\Phi^+_{\bar 1}$ with $\text{ht}\alpha<\text{ht}\beta$ and $f^{0\sigma}_{k+1}-\alpha-\beta=f^{0\sigma}_{k-1}$. This implies immediately that all the components on the right hand side of the formula in the third layer must be part of the tilting module by Proposition~\ref{prop:flags}, which in turn implies immediately the indecomposability for all these tilting modules.

(6). The first formula in (6) is obtained by applying translations to the standard initial tilting modules. The second formula in (6) is obtained by applying the translations to the following nonstandard initial tilting modules:
\begin{itemize}
\item  For $\sigma=2,12$, we take $\TT{f_{3k+2}^{0\sigma}-(\delta-\ep_3)}$;
\item  For $\sigma=212$, we take $\TT{f_{3k+2}^{0\sigma} -(\delta+\ep_2)}$;
\item  For $\sigma=1212$, we take $\TT{f_{3k+2}^{0\sigma} -(\delta+\ep_1)}$;
\item  For $\sigma=21212$, we take $\TT{f_{3k+2}^{0\sigma} -(\delta-\ep_1)}$;
\item  For $\sigma=\wl$, we take $\TT{f_{3k+2}^{0\sigma} -(\delta-\ep_2)}$.
\end{itemize}
Indecomposability follows using Proposition~\ref{prop:flags} in a standard fashion as for (5).

The proof of Theorem~\ref{thm:0:k=2+sigma} is completed.
\end{proof}

We remark that the length of the Verma filtration in Part~(1) of Theorem~\ref{thm:0:k=2+sigma} is $8\ell(\sigma)$;
the lengths of Verma filtrations in Part~(2) are $8\ell(\sigma)$ and $4\ell(\sigma)$, respectively; the lengths of Verma filtrations in the tilting modules in Part (3)-(4) are $6\ell(\sigma)$.
the lengths of Verma filtrations in Part~(5) are $12\ell(\sigma)$ and $6\ell(\sigma)$, respectively;  the Verma lengths in Part~ (6) are identical to (5).

Note that Theorem~\ref{thm:0:k=2+sigma} covers all blocks $\Bl_k$ with $k\ge 2$.
The only cases not covered by  Theorem~\ref{thm:0:k=2+sigma} are: $\TT{\red{0}}^{0\sigma}$, $\TT{\blue{1}}^{0\sigma}$ , $\TT{2}^{0\sigma}$
in $\Bl_0$ and  $\TT{0}^{0\sigma}$ %, $\TT{2}^{0\sigma}$
in $\Bl_1$. These exceptional cases will be treated in  the following two subsections. %Theorems~\ref{thm:0:k=0:sigma} and \ref{thm:0:k=1:sigma} below.

%%%%%
\subsection{Formulae for $\TT{0}^{0\sigma}$ in the block $\Bl_1$}

When combined with Theorem~\ref{thm:0:k=2+sigma}, the following theorem completes the character formulae for tilting modules $\TT{n}^{0\sigma}$ in the block $\Bl_1$ for all $n\in \N$ and $\sigma \in W_2$.

\begin{thm}  [Block $\Bl_1$]
   \label{thm:0:k=1:sigma}
The following formulae hold for tilting modules in the block $\Bl_1$: for $\sigma \in W_2$,
\[
\TT{0}^{0\sigma} =
\begin{cases}
\M{0}^{A_1}+ \M{0}^{2}+ \M{\red{1}}^e+\M{2}^e,
& \quad  \text{ if }  \sigma =e,
\\
\M{0}^{A_1\times[e,\sigma]} +\M{0}^{[e,\sigma]2} +\M{{\red{1}}}^{[e,\sigma]/\langle s_1\rangle},
& \quad  \text{ if } \ell(\sigma) <\ell(\sigma 2), \; \sigma\not=e,
\\
\M{0}^{A_1\times [e,\sigma]},
& \quad \text{ if } \ell(\sigma) >\ell(\sigma 2).
\end{cases}
\]
\end{thm}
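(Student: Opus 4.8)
The plan is to follow the same template used throughout Section~\ref{sec:charT} and the first part of Section~\ref{sec:charT0}: apply the translation functor $\E$ to a suitable initial tilting module (standard when possible, nonstandard when forced), read off the resulting Verma flag via Mathematica, and then use Proposition~\ref{prop:flags} together with Soergel duality \eqref{tiltingD} to establish indecomposability. Since this is the block $\Bl_1$, we are in the regime $k=1$, so the singular weight $f_{\red{1}}$ plays the role of $f_{\red{k}}$ and the arithmetic in Proposition~\ref{prop:flags} will involve the explicit odd roots $\delta\pm\ep_i$ and the anti-dominant weights $\Psi^{-1}(f_{1,n})$ listed just before Table~2.

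I would organize the proof into the three cases appearing in the statement. \textbf{Case $\sigma=e$.} Here $f^{0e}_0 - 2\delta$ is atypical (this is the ``$k=1$'' analogue of the atypicality condition $n=3k+5$ or ``$k=0,n=2$'' noted in the proof of Theorem~\ref{thm:0:k=2+sigma}, specialized to $n=0$), so a nonstandard initial tilting module is needed; I expect the correct choice to be $\TT{f^{0e}_0-\delta}$ (compare case (5-i) with $\sigma=e$ in the proof of Theorem~\ref{thm:0:k=2+sigma}). Applying $\E$ yields a module whose Verma flag is $\M{0}^{A_1}+\M{0}^{2}+\M{\red{1}}^e+\M{2}^e$; this has four terms, so indecomposability needs an argument. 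I would verify $(f^{0e}_0,\beta)=0$ for an appropriate odd root $\beta$ and $(f^{0e}_0-\beta,\gamma)=0$ for a second odd root $\gamma$ with $\hgt\beta<\hgt\gamma$ and $f^{0e}_0-\beta-\gamma=f^{0e}_2$, then invoke Proposition~\ref{prop:flags}(5) to force $M^e_2$ into the flag, and similarly Proposition~\ref{prop:flags}(3) (applied to the odd root realizing $f^{0e}_0-\beta'=f^{e}_{\red 1}$, using that $s_0$ and $s_1$ fix the singular weight appropriately) to force $\M{\red 1}^e$; once the bottom two layers are pinned down the two top terms cannot split off as a tilting summand, giving indecomposability.

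\textbf{Case $\ell(\sigma)<\ell(\sigma 2)$, $\sigma\neq e$.} These I expect to come from standard initial tilting modules $\TT{f^{0\sigma}_0-2\delta}$. The formula has three layers, so as in Theorem~\ref{thm:0:k=2+sigma}(5) and Theorem~\ref{thm:k=0:sigma} I would, for each such $\sigma$, exhibit odd roots $\alpha,\beta$ with $(f^{0\sigma}_0,\alpha)=0$, $(f^{0\sigma}_0-\alpha,\beta)=0$, $\hgt\alpha<\hgt\beta$, and $f^{0\sigma}_0-\alpha-\beta$ equal to the layer-3 weight $f^{0\sigma}_{\red 1}$ (in its non-minimal coset form); Proposition~\ref{prop:flags}(5)--(6) then forces all the $\M{\red 1}^{[e,\sigma]/\langle s_1\rangle}$ terms (hence the whole third layer) into the Verma flag, which forces indecomposability as in the earlier proofs. \textbf{Case $\ell(\sigma)>\ell(\sigma 2)$.} Here the formula $\TT{0}^{0\sigma}=\M{0}^{A_1\times[e,\sigma]}$ is multiplicity free with a single ``layer'' (up to the $A_1\times[e,\sigma]$ bookkeeping), so, exactly as in Theorem~\ref{thm:0:k=2+sigma}(2), indecomposability is immediate from Proposition~\ref{prop:flags}; some of these may require nonstandard initial tilting modules of the type $\TT{f^{0\sigma}_0-(\delta\pm\ep_i)}$, and I would list those choices explicitly (mirroring the list in the second formula of Theorem~\ref{thm:0:k=2+sigma}(2)).

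The routine but laborious part is the Mathematica computation producing the raw Verma flags from each (non)standard initial tilting module, and selecting the right nonstandard modules by trial; this is acknowledged in the paper's proof style and I would simply record the outcomes. \textbf{The main obstacle} is the $\sigma=e$ case: it is the one genuinely ``nearly singular/singular'' exceptional tilting module in $\Bl_1$, it requires a nonstandard initial tilting module, the resulting four-term flag is not multiplicity-free in the naive sense once singular cosets are rewritten via minimal-length representatives (cf.\ Remark~\ref{rem:various}), and one must be careful that the Casimir/Jantzen argument of Proposition~\ref{prop:flags} still isolates the layer-3 Verma $M^e_2$ despite the proximity of the singular weight $f_{\red 1}$ — this is precisely the kind of subtlety that made $T_{1,-1,-1}$ hard in \cite{CW17} and $T_{2,0}$ intractable (Remark~\ref{rem:T20}), so extra care (possibly a direct height/weight-space check, as in those cases) may be needed here.
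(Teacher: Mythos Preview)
Your overall template---apply $\E$ to a suitable initial tilting module, read off the Verma flag, then verify indecomposability via Proposition~\ref{prop:flags}---is exactly the paper's. The indecomposability sketches you give are also essentially what the paper does (for $\ell(\sigma)<\ell(\sigma 2)$ one passes from $f_0^{0\sigma}$ down to $f_0^\sigma$ via $\delta$, then to $f_{\red 1}^\sigma$ via an isotropic odd root of larger height; the single-layer case $\ell(\sigma)>\ell(\sigma 2)$ is immediate).

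Where you go wrong is in the choice of initial tilting modules: you have the standard/nonstandard assignments \emph{reversed} in the first two cases. For $\sigma=e$, the weight $f^{0e}_0-2\delta=[-3/2\,|\,-1/2,-2,5/2]$ is \emph{typical} (your appeal to the ``$n=3k+5$ or $k=0,n=2$'' criterion is mistaken: neither holds when $k=1$, $n=0$), and the paper obtains the formula by translating the \emph{standard} initial tilting module. Your proposed substitute $\TT{f^{0e}_0-\delta}$ has highest weight $f_0^e=f_{1,0}$, which lies in $\Bl_1$ itself, so your analogy with case~(5-i) of Theorem~\ref{thm:0:k=2+sigma} breaks down. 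Conversely, for $\ell(\sigma)<\ell(\sigma 2)$ with $\sigma\neq e$ the paper uses \emph{nonstandard} initial modules throughout ($\TT{f_0^{0\sigma}-(\delta-\ep_1)}$ for $\sigma=1$, $\TT{f_0^{0\sigma}-(\delta+\ep_1)}$ for $\sigma=21$, and $\TT{f_0^{0\sigma}-(\delta-\ep_3)}$ for $\sigma=121,2121,12121$); the standard $2\delta$-shift does not produce the stated right-hand side. Your third case is correct and does mirror the list in the second formula of Theorem~\ref{thm:0:k=2+sigma}(2).

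Finally, your identification of $\sigma=e$ as ``the main obstacle'' is unfounded: the paper handles it routinely with the standard initial module and does not single it out; the analogy with $T_{1,-1,-1}$ in \cite{CW17} or $T_{2,0}$ in Remark~\ref{rem:T20} is not apt here.
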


\begin{proof}
The formula for $\sigma=e$ is obtained by applying a translation functor to the standard initial tilting module.

Then we apply translation functors to the following nonstandard initial tilting modules when $\ell(\sigma) <\ell(\sigma 2)$ with $\sigma \neq e$:
\begin{itemize}
\item  For $\sigma=1$, we take $\TT{f_{0}^{0\sigma}-(\delta-\ep_1)}$;
\item  For $\sigma=21$, we take $\TT{f_{0}^{0\sigma} -(\delta+\ep_1)}$;
\item  For $\sigma=121,2121,12121$, we take $\TT{f_{0}^{0\sigma} -(\delta-\ep_3)}$.
\end{itemize}
To prove the indecomposability we note that we can find an odd positive root $\beta$ such that $\text{ht}\beta>\text{ht}\delta$, $\langle f_0^\sigma,\beta\rangle=0$ and $f_0^{\sigma}-\beta=f_1^{\sigma}$.

Finally we apply translation functors to the following nonstandard initial tilting modules when $\ell(\sigma) >\ell(\sigma 2)$:
\begin{itemize}
\item  For $\sigma=212,1212$, we take $\TT{f_{0}^{0\sigma} -(\delta+\ep_2)}$;
\item  For $\sigma=2,12,21212,121212$, we take $\TT{f_{0}^{0\sigma} -(\delta-\ep_3)}$.
\end{itemize}
Indecomposibility of the resulting modules follows readily from applying Proposition~\ref{prop:flags}.

%\red{Merge below to proof of Theorem 5.1 if needed}
%[ For Part (2) we use the following translations: For $\sigma=e,2$ we shift by $\delta$, for $\sigma=1,12,2121,21212$ we shift by $\delta+\ep_2$, for $\sigma=21,121,212,1212$ we shift by $\delta-\ep_3$, for $\sigma=12121$ we shift by $\delta+\ep_1$, and for $\sigma=w_0$ we shift by $\delta-\ep_1$. Indecomposability of the modules in the second formula is obvious using Proposition~\ref{prop:flags}. For the modules in the first formula we use Proposition~\ref{prop:flags}(6) to show that $\M{0}^{A_1\times[e,\sigma]}$ must be part of $\TT{2}^{0\sigma}$. This then implies the claimed Verma flag structure of the $\TT{2}^{0\sigma}$. ]

The theorem is proved.
\end{proof}

 We note that the lengths of Verma filtrations for the tilting modules $\TT{0}^{0\sigma}$ in Theorem~\ref{thm:0:k=1:sigma} are $5,  7\ell(\sigma)$, and $4\ell(\sigma)$, respectively.

%%%%%
\subsection{Formulae for $\TT{\red{0},\blue{1},2}^{0\sigma}$ in the block $\Bl_0$}

When combined with Theorem~\ref{thm:0:k=2+sigma}, the following theorem completes the character formulae for tilting modules $\TT{n}^{0\sigma}$ in the block $\Bl_0$ for all $n\in \N$ and $\sigma \in W_2$, except $\TT{2}^0$.

\begin{thm} [Block $\Bl_0$]
   \label{thm:0:k=0:sigma}
The following formulae hold for tilting modules in the block $\Bl_0$, for $\sigma \in W_2$.

\begin{enumerate}
\item
Let $\sigma =2, 12, 212$. Suppose that $\ell(i\sigma)<\ell(\sigma)$, for $i\in\{1,2\}$. Then
\begin{align*}
\TT{\red{0}}^{0\sigma 1} &= \TT{{\red{0}}}^{0\sigma}
 = \M{\red{0}}^{A_1 \times [e,\sigma]/\langle s_1\rangle} + \M{\red{0}}^{[e,\sigma]\cup\{i\sigma12,\sigma12\}} + \M{\blue{1}}^{[e,\sigma1]}.
 \end{align*}
 %For $\sigma=e,1212,21212$
 Moreover, we have
 \begin{align*}
\TT{\red{0}}^{01} &= \TT{\red{0}}^{0} = \M{\red{0}}^{A_1} + \M{\red{0}}^{\{2,12\}} + \M{\blue{1}}^{[e,1]},
\\
 \TT{\red{0}}^{01212 1} &= \TT{\red{0}}^{01212}
 = \M{\red{0}}^{A_1 \times [e,1212]/\langle s_1\rangle} + \M{\red{0}}^{[e,12]/\langle s_1\rangle\cup\{21212\}} + \M{\blue{1}}^{[e,121]/\langle s_2\rangle},
 \\
\TT{\red{0}}^{0w_0} &=\TT{\red{0}}^{21212}=\M{\red{0}}^{A_1\times W_2/\langle s_1\rangle}.
 \end{align*}

%2
\item
%\red{[copied over from Theorem~\ref{thm:0:k=2+sigma}(4)]}
$\TT{\blue{1}}^{0\sigma 2}
= \TT{\blue{1}}^{0\sigma} =
\begin{cases}
\M{\blue{1}}^{A_1\times [e,\sigma]/\langle s_2\rangle} +\M{\red{0}}^{A_1\times [e,\sigma ]},
& \text{ if }  \ell(\sigma)>\ell(\sigma 2), \; \sigma \not=\wl,
\\
\M{\blue{1}}^{A_1\times [e,w_0]/\langle s_2\rangle} + \M{\red{0}}^{A_1\times [e,w_0]/\langle s_1\rangle},
& \text{ if } \sigma=\wl.
\end{cases}
$
%3
\item
$\TT{2}^{0\sigma} =
\begin{cases}
%\red{open ???},  &   \text{ if }  \sigma =e,  \\
\M{2}^{A_1\times [e,\sigma]} +\M{\blue{1}}^{A_1\times [e,\sigma]} +\M{{\red{0}}}^{A_1\times {[e,\sigma]/\langle s_1\rangle}},
&   \text{ if } \ell(\sigma) <\ell(\sigma 2), \; \sigma\not=e,
\\
\M{2}^{A_1\times [e,\sigma]} +\M{\blue{1}}^{A_1\times {[e,\sigma]/\langle s_2\rangle}},
&  \text{ if } \ell(\sigma) >\ell(\sigma 2).
\end{cases}
$
\end{enumerate}
\end{thm}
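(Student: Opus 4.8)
The plan is to follow the template already established in the proofs of Theorems~\ref{thm:k=1+sigma} and~\ref{thm:0:k=2+sigma}. For each highest weight $f=f^{0\sigma}_n$ occurring in the statement, I would produce an initial tilting module $\TT{g}$ such that the translation functor $\E$ (tensoring with the adjoint module and projecting onto $\Bl_0$) yields a module $\E\TT{g}$ whose Verma flag is exactly the right-hand side of the asserted identity. Since $\M{f}$ then occurs there as the unique highest term, $\TT{f}$ is automatically a direct summand of $\E\TT{g}$, so the real work is to show $\E\TT{g}$ is \emph{indecomposable}, which gives $\E\TT{g}=\TT{f}$. Because we are in the principal block, $f-2\delta$ is always atypical, so I expect all the initial tilting modules here to be \emph{nonstandard}, of the form $\TT{f-\beta}$ for a suitable odd positive root $\beta$ (or $\TT{f-\delta}$), with their own Verma flags supplied by Theorems~\ref{thm:k=1+sigma}--\ref{thm:0:k=2+sigma} or by direct computation; pinning down the correct $\beta$ in each case, and tracking the identifications among the Verma modules $\M{\red 0}^w$ and $\M{\blue 1}^w$ attached to the two singular $W$-orbits (with stabilizers $\langle s_1\rangle$ and $\langle s_2\rangle$), is the Mathematica-intensive step.

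For indecomposability I would split into cases by the number of layers, as before. Every formula in Part~(2) and the second branch of Part~(3) is multiplicity-free with only two layers: there Proposition~\ref{prop:flags}(1)--(4) forces the top layer and part of the bottom layer into $\TT{f}$, and the remaining Verma modules are too few to assemble into a direct sum of proper tilting modules, so $\E\TT{g}$ is indecomposable. The genuinely three-layer formulae — all of Part~(1) except the maximal case $\sigma=\wl$, and the first branch of Part~(3) — I would settle with Proposition~\ref{prop:flags}(5)--(6): for each such $f$ one must exhibit odd positive roots $\beta,\gamma$ with $(f,\beta)=0$, $(f-\beta,\gamma)=0$, $\hgt(\beta)<\hgt(\gamma)$ and $f-\beta-\gamma$ equal to the bottom-layer highest weight (a $W$-translate of $f_{\blue 1}$ in Part~(1), of $f_{\red 0}$ in Part~(3)); then part~(5) places one bottom-layer Verma module into $\TT{f}$ and part~(6), run with the even reflections sweeping out the relevant $W_2$-interval, places in all of them, after which $\E\TT{g}$ has no room to split. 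The root identities and height/zero-weight-space checks would be done case by case, in the style of the displayed bullet computations in the proof of Theorem~\ref{thm:k=1+sigma}.

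The cases I expect to need separate handling are the maximal elements: $\TT{\red 0}^{0\wl}$, whose flag is a single full $W$-orbit $\M{\red 0}^{A_1\times W_2/\langle s_1\rangle}$, and $\TT{\blue 1}^{0\wl}$, whose bottom layer collapses onto the $\langle s_1\rangle$-cosets. For $\TT{\red 0}^{0\wl}$ I would choose the nonstandard initial tilting module to be maximally singular (in analogy with the $[-\tfrac{3}{2}\,|\,0,0,0]$ choice in the proof of Theorem~\ref{thm:k=0:sigma}) so that $\E$ does not over-produce; the resulting module is then a single $W$-orbit of Verma modules with unique top term $\M{f}$ ($f$ its dominant representative), and walking downward through the $W_2$-Bruhat graph with Proposition~\ref{prop:flags} shows every remaining Verma module lies in $\TT{f}$, forcing indecomposability; $\TT{\blue 1}^{0\wl}$ is still a multiplicity-free two-layer flag once the right nonstandard initial module is identified. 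The hard part, as throughout the paper, will not be a single conceptual step but the combinatorial bookkeeping: locating the correct nonstandard initial modules, verifying with Mathematica that translation reproduces precisely the stated flags, and correctly accounting for the multiplicity~$2$ that surfaces among the singular Verma modules once rewritten via minimal-length coset representatives (cf.\ Remark~\ref{rem:various}). The tilting module $\TT{2}^{0}$ is deliberately excluded because its flag resists exactly this analysis; see Remark~\ref{rem:T20}.
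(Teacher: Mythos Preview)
Your overall strategy matches the paper's, but there are two concrete discrepancies, one of which is a genuine gap.

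First, a factual slip: for $n=0,1$ in $\Bl_0$ the weight $f^{0\sigma}_n-2\delta$ is \emph{typical} (its $\delta$-coordinate becomes $-3/2$ or $-1/2$, neither of which occurs among the $G_2$-coordinates of $f_{0,0}$ or $f_{0,1}$), so standard initial tilting modules are available and are exactly what the paper uses for all of Parts~(1) and~(2) except the $\wl$ cases. Only Part~(3) genuinely requires nonstandard initial modules throughout, since $f^{0\sigma}_2-2\delta$ is atypical. This is minor---your nonstandard plan could still work---but your stated reason for expecting nonstandard everywhere is wrong.

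The real gap is the case $\sigma=1212$ in Part~(1). Here the standard translation $\E\TT{f^{01212}_0-2\delta}$ does \emph{not} produce exactly the asserted Verma flag: it over-produces, yielding
\[
\M{\red{0}}^{A_1 \times [e,1212]/\langle s_1\rangle} + \M{\red{0}}^{[e,1212]\cup\{21212,\wl\}} + \M{\blue{1}}^{[e,12121]},
\]
which is the asserted $\TT{\red 0}^{01212}$ plus an extra copy of $\TT{\red 0}^{\wl}$. Your template---find $g$ so that $\E\TT{g}$ matches the right-hand side exactly, then prove indecomposability---breaks down, and Proposition~\ref{prop:flags} cannot separate the two summands since it only gives lower bounds on multiplicities. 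The paper resolves this by a direct weight-space argument: it shows $[\M{\red 0}^{0}:L_{\red 0}^{1212}]=1$ by checking that $(\M{\red 0}^{0})_{f_0^{1212}}$ is $3$-dimensional and that the vector $f_{\alpha_1}f_{\alpha_2}f_{\alpha_3}v$ survives in the simple quotient $L_{\red 0}^{0}$, so that the three known composition factors already exhaust this weight space. This forces the translated module to split as $\TT{\red 0}^{01212}\oplus\TT{\red 0}^{\wl}$. This ad~hoc computation lies outside your proposed toolkit and is the essential new ingredient in the proof of Part~(1); your plan as written would stall at precisely this point.
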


\begin{proof}
(1).  The tilting module $\TT{0}^{0212121}$ is obtained by translating from the initial tilting module $\TT{g}$ with $g=f^{0w_0}_0-(\delta-\ep_3)=[-1/2|0,0,0]$. All other tilting modules are obtained by applying translation functors to standard initial tilting modules.

In case $\sigma\not=212,1212$, Part (1) can be established using Proposition~\ref{prop:flags} and the formulas in Theorem \ref{thm:k=0:sigma}, since in these cases the tilting modules $\TT{0}^\tau$ have components of the form $\M{2}^\kappa$ in the case $\tau\in W_2$ and $\tau\not=21212,w_0$.

If $\sigma=212$, then there is a component of the form $\M{0}^{21212}$. But the tilting module $\TT{0}^{w_0}$ contains a component $\M{1}^{w_0}$, which is not a component of the formula above. This proves the case for $\sigma=212$.

Now we consider the case of $\sigma=1212$. Applying the usual translation functor to the tilting module of highest weight $f^{01212}_0-2\delta$ we get the formula
\begin{align}\label{big:n=k=0:01212}
\M{0}^{A_1 \times [e,1212]/\langle s_1\rangle} + \M{0}^{[e,1212]\cup\{21212,121212\}} + \M{1}^{[e,12121]}.
\end{align}

Consider the sequence of weights $f_0^{0\sigma}=[1/2|-1/2,1,-1/2]$, $f_0^\sigma=f_0^{0\sigma}-[1|0,0,0]=[-1/2|-1/2,1,-1/2]$, $f_0^\sigma-(\delta+\ep_1)=f_1^{121}=[-3/2|-3/2,3/2,0]$. Since $\delta<\delta+\ep_1$ we see that $\M{1}^{121}$ is a component of $\TT{0}^{0\sigma}$. This proves that $\M{1}^{\{e,1,21,121\}}$ are components of $\TT{0}^{0\sigma}$.

We shall now argue that \eqref{big:n=k=0:01212} above is the sum of the tilting modules $\TT{0}^{01212}\oplus\TT{0}^{w_0}$. First, note that the weight space $(\M{0}^0)_{f_0^{1212}}$ is $3$-dimensional. The composition factor $L^{212}_0$ of $M^0_0$ has such a non-zero weight space and the composition factor $L^{1212}_0$ has clearly such a weight space as well. Thus, \eqref{big:n=k=0:01212} is isomorphic to $\TT{0}^{01212}\oplus\TT{0}^{w_0}$ is equivalent to $[\M{0}^{0}:L_{0}^{1212}]=1$. Now, if $L_0^{1212}$ appears with composition multiplicity $2$, then this would mean that $L^0_0$ does not have such a weight space. But if $v$ is a highest weight vector of $M^0_0$, then the vector $f_{\alpha_1}f_{\alpha_2}f_{\alpha_3}v$ is a vector of that weight in $M^0_0$. Now, since we have $e_{\alpha_i}f_{\alpha_i}v_\la$ is a non-zero scalar multiple of $v$, we see that $e_{\alpha_1}e_{\alpha_2}e_{\alpha_3}f_{\alpha_1}f_{\alpha_2}f_{\alpha_3}v$ is a non-zero multiple of $v$. This proves that $(L^0_0)_{f^{1212}_0}\not=0$ and hence $[\M{0}^{0}:L_{0}^{1212}]=1$, and whence (1) for $\sigma=1212$.

\vspace{3mm}
(2). %moved over from the previous theorem.
The formulae are obtained applying translation functors to the standard initial tilting modules, except for $\TT{1}^{0212121}$, which is obtained by translating from the initial tilting module $\TT{g}$, where $g =f^{0w_0}_1-(\ep_2-\ep_3)=[3/2|0,0,0]$. Indecomposability is straightforward using Proposition~\ref{prop:flags} and Part~(1).

\vspace{3mm}

(3). The formulae for the tilting modules $\TT{2}^{0\sigma}$ are obtained by applying translation functors  to the following nonstandard initial tilting modules:
\begin{itemize}
\item
For $\sigma =1,21212$, take $\TT{f^{0\sigma}_2-(\delta-\ep_1)}$;
\item
For $\sigma =12$, take $\TT{f^0_2-\delta}$;
\item
For $\sigma =21,1212$, take  take $\TT{f^{0\sigma}-(\delta+\ep_1)}$;
\item
For $\sigma =212$, take  $\TT{f^{0\sigma}_2-(\delta+\ep_2)}$;
\item
For $\sigma =121,\wl$, take $\TT{f^{0\sigma}_2-(\delta-\ep_2)}$;
\item
For $\sigma =2,2121,12121$, take $\TT{f^{0\sigma}_2-(\delta-\ep_3)}$.
\end{itemize}
Indecomposability of all the formulae with $\sigma\neq e$ follows directly from Proposition~\ref{prop:flags}.

\vspace{2mm}
The proof of Theorem~\ref{thm:0:k=0:sigma} is completed.
\end{proof}

We remark that the length of the Verma filtration of the tilting module $\TT{\blue{1}}^{0\sigma}$ in Theorem~\ref{thm:0:k=0:sigma} with $\sigma \neq \wl$ is $6 \ell(\sigma)$; the lengths of Verma filtrations of the tilting modules $\TT{2}^{0\sigma}$ in Theorem~\ref{thm:0:k=0:sigma} are $10\ell(\sigma)$ and $6 \ell(\sigma)$, respectively.

 \begin{rem}
  \label{rem:T20}
Among all tilting modules in $\OO$, the tilting module $\TT{2}^0$ in $\Bl_0$ is the only one whose Verma filtration remains to be determined. We can show that $\TT{2}^0$ must be a summand of a module with a Verma flag $\M{2}^{0,1,2,2,e,e,e} +\M{1}^{0,1,e,e,e} +\M{0}^{0,2,e} +\M{3}^{e}$ (note this module has 4 layers); we can show that $\TT{2}^0$ is one of 5 explicit possible submodules (of Verma lengths $10, 11, 13, 14, 16$, respectively), but we are unable to pin it down.
%Denote by $\texttt T$ the atypical tilting module of weight ${f_2^0-2\delta}$ (which is equal to $f_0^0$ in $\Bl_1$), which has length 5-Verma flag $\M{0}^{0,2,e} +\M{1}^{e} +\M{2}^{e}$. By translation from $\texttt T$, we obtain a module $\E \texttt T$ of length 16-Verma flag: $\E \texttt T =\M{2}^{0,1,2,2,e,e,e} +\M{1}^{0,1,e,e,e} +\M{0}^{0,2,e} +\M{3}^{e}.$
%Proposition~\ref{prop:flags} implies that
%$$(\TT{2}^0: \M{0}^e)=(\TT{2}^0: \M{1}^0)=(\TT{2}^0: \M{0}^0)=1,$$
%and furthermore
%$$(\TT{2}^0: \M{1}^e)\ge 1,\quad (\TT{2}^0: \M{2}^e)\ge 1.$$
%By formula of known tilting modules we have $(\TT{2}^0: \M{0}^2)=(\TT{2}^0: \M{1}^1)=1$. It follows that $\E\texttt T = \TT{2}^0 + a \TT{1}^e +b \TT{2}^e,  (0\le a+b \le 2, 0\le b\le 1). $
\end{rem}

\begin{rem}
A Verma module with multiplicity 3 appears in the Verma filtrations of the tilting modules for $\sigma=12, 212$ in Theorem~\ref{thm:0:k=0:sigma}(1). For example, in terms of minimal coset representatives we can write
\begin{align*}
\TT{\red{0}}^{012}
 = \M{\red{0}}^{\{0,02,012, 212, 1212\}} + 3 \M{\red{0}}^{e} + 2 \M{\red{0}}^{\{2,12\}}  + 2 \M{\blue{1}}^{e} + 2\M{\blue{1}}^{1} + \M{\blue{1}}^{\{21,121\}}.
 \end{align*}
These 2 tilting modules (and possibly $\TT{2}^0$ too, see Remark~\ref{rem:T20}) are the only cases where multiplicity 3 appears in their Verma filtrations among all tilting modules in $\OO$. The Verma multiplicity is at most 2 in all other cases of tilting modules in $\OO$.
 \end{rem}

%%%%%%%
%%%%%%%
\section{Character formulae for projective modules in $\OO$}
  \label{sec:proj}

In this section, we describe the Verma flags for projective covers in the BGG category $\OO$, by using the formulae for Verma flags of tilting modules in $\OO$ obtained in the previous sections.

%%%%
\subsection{Formulae for projective modules in $\Bl_k$}

Applying Theorem~\ref{thm:k=1+sigma} and Soergel duality \eqref{tiltingD}, we obtain the following character formulae for the projective modules  $\PP{n}^{0\tau}$, for $\tau\in W_2$, in the block $\Bl_0$. The notations below are converted from formulae in Theorem~\ref{thm:k=1+sigma} by setting $\tau =\wl \sigma$. Note that $s_0\wl \la = -\la$, for $\la \in X$.

\begin{thm}
  \label{thm:k=1+sigma:P}
The following formulae hold for projective modules in the block $\Bl_k$: for $\tau \in W_2$,
\begin{enumerate}
%1
\item
$\PP{n}^{0\tau} =  \M{n}^{0[\tau,\wl]}  +\M{n+1}^{0[\tau,\wl]},\quad \forall n \in \N \backslash \{k-1, k, 3k, 3k+1\},\qquad (k\ge 0).
$
%2
\item
$\PP{\blue{3k+1}}^{0\tau 2} =\PP{\blue{3k+1}}^{0\tau}
=  \M{\blue{3k+1}}^{0[\tau,\wl]/\langle s_2\rangle} +\M{3k+2}^{0[\tau ,\wl]}, \quad
\text{ if } \ell(\tau) < \ell(\tau 2),
\qquad (k\ge 0).
$
%3
\item
$\PP{\red{k}}^{0\tau 1} =\PP{\red{k}}^{0\tau}
=  \M{\red{k}}^{0[\tau,\wl]/\langle s_1\rangle} +\M{k+1}^{0[\tau , \wl]}, \qquad\;\;\,
\text{ if } \ell(\tau)< \ell(\tau 1), \qquad (k\ge 1).$
%4
\item
$\PP{k-1}^{0\tau} =
\begin{cases}
\M{k-1}^{0[\tau,\wl]} +  \M{\red{k}}^{0[\tau,\wl]} +\M{k+1}^{0[\tau,\wl]},
& \text{ if } \ell(\tau)>\ell(\tau 1),
\\
\M{k-1}^{0[\tau,\wl]} +\M{\red{k}}^{0[\tau,\wl]/\langle s_1\rangle},
& \text{ if } \ell(\tau)<\ell(\tau 1),
\qquad (k\ge 1).
\end{cases}
$
%5
\item
$ \PP{3k}^{0\tau} =
\begin{cases}
\M{3k}^{0[\tau,\wl]} +  \M{\blue{3k+1}}^{0[\tau,\wl]} +\M{3k+2}^{0[\tau,\wl]},
& \text{ if } \ell(\tau)>\ell(\tau 2),
\\
\M{3k}^{0[\tau,\wl]} +\M{\blue{3k+1}}^{0[\tau,\wl]/\langle s_2\rangle},
& \text{ if } \ell(\tau)<\ell(\tau 2),
\qquad (k\ge 1).
\end{cases}
$
\end{enumerate}
\end{thm}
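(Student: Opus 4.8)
The plan is to obtain Theorem~\ref{thm:k=1+sigma:P} from Theorem~\ref{thm:k=1+sigma} purely by relabelling, through the Soergel duality $(\PP{\la}:M_\mu)=(\TT{-\la}:\M{-\mu})$ of \eqref{tiltingD}; no further module-theoretic input is needed, since the hard analytic work (translation functors plus the indecomposability criterion Proposition~\ref{prop:flags}) has already gone into Theorem~\ref{thm:k=1+sigma}.

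First I would fix the combinatorial dictionary. Recall from the paragraph preceding Theorem~\ref{thm:k=1+sigma:P} that $s_0\wl$ acts as $-\mathrm{id}$ on $X$, so $-\la=s_0\wl\la$. Since $s_0$ and $\wl$ lie in the commuting factors $A_1$ and $W_2$ of $W$ and $\wl^2=e$, for $\tau\in W_2$ one gets
\[
-f_{n}^{0\tau}=-(s_0\tau)f_n=(s_0\wl)(s_0\tau)f_n=(\wl\tau)f_n=f_n^{\wl\tau}.
\]
Putting $\sigma:=\wl\tau$, a bijection of $W_2$, Soergel duality reads $(\PP{n}^{0\tau}:M_\mu)=(\TT{n}^{\sigma}:\M{-\mu})$, and a term $\M{m}^{\tau'}=\M{f_m^{\tau'}}$ in the Verma flag of $\TT{n}^{\sigma}$ contributes $\M{-f_m^{\tau'}}=\M{f_m^{\,0\wl\tau'}}$ to the Verma flag of $\PP{n}^{0\tau}$ with exactly the same multiplicity. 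In particular the $f$-index $m$ is unchanged, hence so are the ``layer'' pattern (the indices $n,n+1$, or $3k,3k+1,3k+2$, etc.) and the index restrictions such as $n\in\N\setminus\{k-1,k,3k,3k+1\}$; only the $W_2$-superscript gets transported, by left multiplication by $\wl$.

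Next I would transport the interval and coset data. Left multiplication by $\wl$ reverses the Bruhat order on $W_2$, hence carries $[e,\sigma]$ bijectively onto $[\wl\sigma,\wl]=[\tau,\wl]$ and induces bijections $[e,\sigma]/\langle s_i\rangle\to[\tau,\wl]/\langle s_i\rangle$ on coset images for $i=1,2$. Thus, term by term, $\M{m}^{[e,\sigma]}$ becomes $\M{m}^{0[\tau,\wl]}$ and $\M{m}^{[e,\sigma]/\langle s_i\rangle}$ becomes $\M{m}^{0[\tau,\wl]/\langle s_i\rangle}$; and since Soergel duality is an equality of multiplicities, the ``hidden'' multiplicity-$2$ coincidences among singular terms noted in Remark~\ref{rem:various}(1) carry over verbatim. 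Finally, from $\ell(\wl\sigma)=\ell(\wl)-\ell(\sigma)$ one reads off $\ell(\sigma)>\ell(\sigma i)\iff\ell(\tau)<\ell(\tau i)$, so that e.g. the equality $\TT{\red{k}}^{\sigma}=\TT{\red{k}}^{\sigma 1}$ (valid for $\ell(\sigma)>\ell(\sigma 1)$) passes to $\PP{\red{k}}^{0\tau}=\PP{\red{k}}^{0\tau 1}$ (valid for $\ell(\tau)<\ell(\tau 1)$), matching the case splits in Theorem~\ref{thm:k=1+sigma:P}; the red/blue singular identifications $f_{k,k}^{\ov{\sigma}}$, $f_{k,3k+1}^{\ov{\sigma}}$ also survive since $\wl$ commutes with $s_1$ and $s_2$. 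Substituting these translations into the five formulae of Theorem~\ref{thm:k=1+sigma} then yields, case by case, the five formulae of Theorem~\ref{thm:k=1+sigma:P}.

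The argument as a whole is routine; the only place demanding care — and thus the main obstacle — is precisely this bookkeeping: keeping the superscript $0\tau$ (meaning $s_0\tau$) distinct from $\tau$, applying the order-reversing involution $\wl\,\cdot$ consistently to Bruhat intervals and to the length inequalities simultaneously, and matching the singular-weight coset conventions on the projective and tilting sides.
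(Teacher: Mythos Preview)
Your proposal is correct and follows exactly the paper's own approach: the paper states (in the sentence immediately preceding Theorem~\ref{thm:k=1+sigma:P}) that the formulae are obtained from Theorem~\ref{thm:k=1+sigma} via Soergel duality \eqref{tiltingD} under the substitution $\tau=\wl\sigma$, using $s_0\wl\la=-\la$. You have simply spelled out the bookkeeping (order-reversal of Bruhat intervals, flipping of length inequalities, compatibility with the coset conventions for singular weights) that the paper leaves implicit.
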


Applying Theorem~\ref{thm:0:k=2+sigma} and Soergel duality \eqref{tiltingD}, we obtain the following character formulae for the projective modules $\PP{n}^{\tau}$, for $\tau\in W_2$, in the block $\Bl_k$.

\begin{thm}
  \label{thm:0:k=2+sigma:P}
The following formulae hold for projective modules in the block $\Bl_k$: for $\tau \in W_2$,
\begin{enumerate}
%1
  \item
$\PP{n}^{\tau} =  \M{n}^{A_1\times [\tau,\wl]}  +\M{n-1}^{A_1\times [\tau,\wl]},
\quad \forall n \in \N \backslash \{0,\red{k}, k+1, \blue{3k+1}, 3k+2\}, \quad (k\ge 0).
$
%2
\item
$ \PP{0}^{\tau} =
\begin{cases}
\M{0}^{A_1\times [\tau,\wl]}+\M{0}^{0[\tau,\wl]2} +\M{1}^{0[\tau,\wl]},
& \text{ if } \ell(\tau)>\ell(\tau 2),
\\
\M{0}^{A_1\times [\tau,\wl]},
& \text{ if } \ell(\tau)<\ell(\tau 2),
\qquad (k\ge 2).
\end{cases}
$
%3
\item
$ \PP{\red{k}}^{\tau 1} =\PP{\red{k}}^{\tau}
=  \M{\red{k}}^{A_1\times [\tau,\wl]/\langle s_1\rangle} +\M{k-1}^{A_1\times [\tau,\wl]}$,
\quad if $\ell(\tau) < \ell(\tau 1), \qquad (k\ge 1).
$
%4
\item
$ \PP{\blue{3k+1}}^{\tau 2} =\PP{\blue{3k+1}}^{\tau}
=  \M{\blue{3k+1}}^{A_1\times [\tau,\wl]/\langle s_2\rangle} +\M{3k}^{A_1\times [\tau,\wl]}$,
if $\ell(\tau) <\ell(\tau 2), \qquad (k\ge 1).
$
%5
\item
$ \PP{k+1}^{\tau} =
\begin{cases}
\M{k+1}^{A_1\times [\tau,\wl]} +\M{\red{k}}^{A_1\times [\tau,\wl]} +\M{{k-1}}^{A_1\times [\tau,\wl]},
& \text{ if } \ell(\tau)>\ell(\tau 1),
\\
\M{k+1}^{A_1\times [\tau,\wl]} +\M{\red{k}}^{A_1\times [\tau,\wl]/\langle s_1\rangle},
& \text{ if } \ell(\tau)<\ell(\tau 1),
\qquad (k\ge 1).
\end{cases}
$
%6
\item
$ \PP{3k+2}^{\tau} =
\begin{cases}
\M{3k+2}^{A_1\times [\tau,\wl]} +\M{\blue{3k+1}}^{A_1\times [\tau,\wl]} +\M{3k}^{A_1\times [\tau,\wl]},
& \text{ if } \ell(\tau)>\ell(\tau 2),
\\
\M{3k+2}^{A_1\times [\tau,\wl]} +\M{\blue{3k+1}}^{A_1\times [\tau,\wl]/\langle s_2\rangle},
& \text{ if } \ell(\tau)<\ell(\tau 2),
\qquad (k\ge 1).
\end{cases}
$
\end{enumerate}
\end{thm}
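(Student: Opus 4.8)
The plan is to derive Theorem~\ref{thm:0:k=2+sigma:P} directly from Theorem~\ref{thm:0:k=2+sigma} by applying the Soergel duality \eqref{tiltingD}, $(\TT{-\la}:\M{-\mu}) = (\PP{\la}:\M{\mu})$, together with the fact (recorded just above the statement) that $s_0\wl$ acts as $-\mathrm{id}$ on $X$. The key point is that negating a highest weight is a combinatorial operation on the indexing Weyl group element: if a weight equals $\nu(f_{k,n})$ with $\nu\in W$, then $-\nu(f_{k,n}) = (s_0\wl)\nu(f_{k,n}) = (s_0\wl\nu)(f_{k,n})$, and since $s_0$ is central in $W=A_1\times W_2$ with $s_0^2=e$, for $\sigma\in W_2$ one gets $-f_{k,n}^{0\sigma} = f_{k,n}^{\wl\sigma}$ and $-f_{k,n}^{\sigma} = f_{k,n}^{s_0\wl\sigma}$. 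Setting $\tau=\wl\sigma$ (a bijection of $W_2$ since $\wl^2=e$), the projective module $\PP{n}^{\tau}$ is identified via \eqref{tiltingD} with the Soergel dual of the tilting module $\TT{n}^{0\wl\tau}$ whose Verma flag is given by Theorem~\ref{thm:0:k=2+sigma}, with all Verma-flag multiplicities preserved verbatim.

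The remaining work is to push each of the six families of tilting formulae through this dictionary, and I would first isolate the elementary facts about $W_2$ that are needed: (i) left multiplication by $\wl$ reverses the Bruhat order, so $\wl\cdot[e,\sigma] = [\wl\sigma,\wl] = [\tau,\wl]$; (ii) $\wl$ commutes with $s_1$ and with $s_2$ (being $-\mathrm{id}$, it conjugates each reflection to itself), so it carries cosets modulo $\langle s_i\rangle$ to cosets modulo $\langle s_i\rangle$, giving $\wl\cdot\big([e,\sigma]/\langle s_i\rangle\big) = [\tau,\wl]/\langle s_i\rangle$; and (iii) consequently the length conditions transform as ``$\ell(\sigma)<\ell(\sigma s_i)$'' $\leftrightarrow$ ``$\ell(\tau)>\ell(\tau s_i)$'' and conversely. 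Since $s_0$ commutes with $W_2$ and $s_0\wl(s_0 w) = \wl w$ for $w\in W_2$, one obtains $-(A_1\times[e,\sigma]) = A_1\times[\tau,\wl]$ (one half of the index set picks up an $s_0$, the other half loses it) and $-([e,\sigma]s_2) = s_0[\tau,\wl]s_2 =: 0[\tau,\wl]2$. Substituting these into Theorem~\ref{thm:0:k=2+sigma}(1)--(6) and reading off the duals yields exactly the six formulae of the statement; the stated ranges of $k$ and the collapsing identities $\PP{\red{k}}^{\tau 1} = \PP{\red{k}}^{\tau}$, $\PP{\blue{3k+1}}^{\tau 2} = \PP{\blue{3k+1}}^{\tau}$ are the images under the dictionary of the corresponding features of Theorem~\ref{thm:0:k=2+sigma} (using that $f_{k,k}$ is $s_1$-stable and $f_{k,3k+1}$ is $s_2$-stable).

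No genuine obstacle arises: the argument uses nothing beyond Theorem~\ref{thm:0:k=2+sigma} and \eqref{tiltingD}, and in particular BGG reciprocity is not invoked here. The only step requiring attention is bookkeeping: one must verify case by case that the ``extra'' summand $\M{0}^{[e,\sigma]2}$ in Theorem~\ref{thm:0:k=2+sigma}(2) dualizes precisely to $\M{0}^{0[\tau,\wl]2}$, that the coset quotients $/\langle s_1\rangle$ on $\M{\red k}$-terms and $/\langle s_2\rangle$ on $\M{\blue{3k+1}}$-terms are preserved (the stabilizers $\langle s_1\rangle$ of $f_{k,k}$ and $\langle s_2\rangle$ of $f_{k,3k+1}$ being fixed under conjugation by $s_0\wl$), and that the multiplicity-$2$ phenomena noted in Remark~\ref{rem:various} survive the translation unchanged. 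Carrying this out for all six parts completes the proof.
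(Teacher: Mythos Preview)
Your proposal is correct and follows exactly the approach taken in the paper. The paper's own argument is just the one sentence preceding the theorem (``Applying Theorem~\ref{thm:0:k=2+sigma} and Soergel duality \eqref{tiltingD}, we obtain\ldots''), together with the substitution $\tau=\wl\sigma$ and the identity $s_0\wl\la=-\la$ already recorded before Theorem~\ref{thm:k=1+sigma:P}; you have simply spelled out the bookkeeping that the paper leaves implicit.
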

Theorems~\ref{thm:k=1+sigma:P} and \ref{thm:0:k=2+sigma:P} provide character formulae for all projective covers in the block $\Bl_k$, for $k\ge 2$, and for some projectives in the blocks $\Bl_0$ and $\Bl_1$.

%%%%
\subsection{Formulae for projective modules in $\Bl_1$}

Applying Theorem~\ref{thm:0:k=1:sigma} and the Soergel duality \eqref{tiltingD}, we obtain the following character formulae for the projective modules $\PP{0}^{\tau}$, for $\tau\in W_2$, in the block $\Bl_1$.

\begin{thm}  [Block $\Bl_1$]
   \label{thm:0:k=1:sigma:P}
The following formulae hold for projective modules in the block $\Bl_1$: for $\tau \in W_2$,
\[
\PP{0}^{\tau} =
\begin{cases}
\M{0}^{A_1\times \wl}+ \M{0}^{0\wl 2}+ \M{\red{1}}^{0\wl} +\M{2}^{0\wl},
& \quad  \text{ if }  \tau =\wl,
\\
\M{0}^{A_1\times[\tau,\wl]} +\M{0}^{0[\tau,\wl]2} +\M{{\red{1}}}^{0[\tau,\wl]/\langle s_1\rangle},
& \quad  \text{ if } \ell(\tau) >\ell(\tau 2), \; \tau\not= \wl,
\\
\M{0}^{A_1\times [\tau,\wl]},
& \quad \text{ if } \ell(\tau) <\ell(\tau 2).
\end{cases}
\]
\end{thm}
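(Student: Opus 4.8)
The plan is to deduce Theorem~\ref{thm:0:k=1:sigma:P} directly from the tilting-module formulae of Theorem~\ref{thm:0:k=1:sigma} by means of Soergel duality \eqref{tiltingD}, exactly as in the passage from Theorem~\ref{thm:k=1+sigma} to Theorem~\ref{thm:k=1+sigma:P}. Recall that $(\PP{\la}:M_\mu)=(\TT{-\la}:\M{-\mu})$ and that $-\la=s_0\wl\la$ for every $\la\in X+\rho$; since $s_0$ commutes with $W_2$ and $s_0^2=e$, this gives $-f_{1,0}^{\tau}=f_{1,0}^{s_0\wl\tau}$, hence $\TT{-f_{1,0}^{\tau}}=\TT{0}^{0(\wl\tau)}$ with $\wl\tau\in W_2$. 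Consequently the Verma flag of $\PP{0}^{\tau}$ is obtained from that of $\TT{0}^{0(\wl\tau)}$ by replacing each Verma module $M_\nu$ by $M_{-\nu}$, i.e.\ by applying the involution $w\mapsto s_0\wl w$ to every Weyl group element labelling a Verma module in the flag. I would also record that the involution $\la\mapsto-\la$ preserves each atypical block $\Bl_k$ (both the Casimir eigenvalue $(f+\rho,f-\rho)$ and atypicality are invariant under it), so the resulting formulae genuinely live in $\Bl_1$.

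First I would set $\sigma:=\wl\tau$, so that $\tau$ runs over $W_2$ precisely when $\sigma$ does, and translate the three cases of Theorem~\ref{thm:0:k=1:sigma}. Since left multiplication by $\wl$ reverses lengths and the Bruhat order on $W_2$, the condition $\sigma=e$ becomes $\tau=\wl$, the condition $\ell(\sigma)<\ell(\sigma2)$ with $\sigma\neq e$ becomes $\ell(\tau)>\ell(\tau2)$ with $\tau\neq\wl$, and $\ell(\sigma)>\ell(\sigma2)$ becomes $\ell(\tau)<\ell(\tau2)$; these three cases partition $W_2$, matching the statement.

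Next I would track the index sets under $w\mapsto s_0\wl w$. Bruhat reversal sends the interval $[e,\sigma]\subset W_2$ to $[\wl\sigma,\wl]=[\tau,\wl]$, so $s_0[e,\sigma]$ maps to $0[\tau,\wl]$; because $s_0$ centralizes $W_2$, the set $A_1\times[e,\sigma]$ maps to $A_1\times[\tau,\wl]$; the multiset $[e,\sigma]2=\{ws_2:w\in[e,\sigma]\}$ maps to $0[\tau,\wl]2$; and the coset set $[e,\sigma]/\langle s_1\rangle$ maps to $0[\tau,\wl]/\langle s_1\rangle$, since $s_1$ is the stabilizer of the singular weight $f_{\red 1}=f_{1,1}$ and is centralized by $s_0$. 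In the case $\sigma=e$ one has in addition $\M{0}^{A_1}\mapsto\M{0}^{A_1\times\wl}$, $\M{0}^{2}\mapsto\M{0}^{0\wl2}$ (using $\ell(\wl s_2)<\ell(\wl)$), $\M{\red 1}^{e}\mapsto\M{\red 1}^{0\wl}$ and $\M{2}^{e}\mapsto\M{2}^{0\wl}$. Substituting these translations into the three cases of Theorem~\ref{thm:0:k=1:sigma} produces exactly the three displayed formulae.

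Since Theorem~\ref{thm:0:k=1:sigma} is already proved, this argument requires no new module-theoretic input: there is no genuine obstacle, only a careful, case-by-case verification of the order reversal induced by $\wl$ and of the correct placement of the factor $s_0$ inside the word/coset/interval symbols (e.g.\ checking that $0\wl2$, $A_1\times[\tau,\wl]$ and $0[\tau,\wl]/\langle s_1\rangle$ are the right images), entirely parallel to the bookkeeping underlying Theorem~\ref{thm:k=1+sigma:P}.
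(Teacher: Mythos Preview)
Your proposal is correct and follows exactly the paper's own approach: the paper derives Theorem~\ref{thm:0:k=1:sigma:P} by applying Soergel duality \eqref{tiltingD} to Theorem~\ref{thm:0:k=1:sigma}, with the substitution $\tau=\wl\sigma$. Your write-up simply makes explicit the bookkeeping (the Bruhat-reversal $[e,\sigma]\mapsto[\tau,\wl]$, the behaviour of $A_1$, the coset sets, and the length conditions) that the paper leaves implicit.
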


Together with Theorems~\ref{thm:k=1+sigma:P} and \ref{thm:0:k=2+sigma:P} (for $k=1$), Theorem~ \ref{thm:0:k=1:sigma:P} provides character formulae for all projective covers in the block $\Bl_1$.

%%%%
\subsection{Formulae for projective modules in  $\Bl_0$}

Applying Theorem~\ref{thm:k=0:sigma} and the Soergel duality \eqref{tiltingD}, we obtain the following character formulae for the projective modules $\PP{\red{0}}^{0\tau}$, for $\tau\in W_2$, in the block $\Bl_0$.
\begin{thm}  [Block $\Bl_0$]
   \label{thm:k=0:sigma:P}
The following formulae hold for projective modules in the block $\Bl_0$: for $\tau \in W_2$ such that $\ell(\tau) <\ell(\tau 1)$,
\begin{align*}
\PP{\red{0}}^{0\tau}=\PP{\red{0}}^{0\tau 1}
&=  \M{\red{0}}^{0[\tau,\wl]/\langle s_1\rangle} +\M{\blue{1}}^{0[\tau,\wl]} +\M{2}^{0[\tau,\wl]}, \text{ for }\tau\not=e;
%\label{k=0:0s:P}
 \\
\PP{\red{0}}^{0} =\PP{\red{0}}^{01}
&=  \M{\red{0}}^{0[e,\wl]/\langle s_1\rangle} +\M{\blue{1}}^{0[e,\wl]/\langle s_2\rangle}.
%\label{k=0:0wl:P}
\end{align*}
\end{thm}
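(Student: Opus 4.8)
The plan is to derive every formula by pushing the tilting-module formulae of Theorem~\ref{thm:k=0:sigma} through the Soergel duality \eqref{tiltingD}, $(\PP{\la}:M_\mu)=(\TT{-\la}:\M{-\mu})$. Recall that in the block-$\Bl_0$ shorthand $\PP{\red{0}}^{0\tau}$ stands for $\PP{f_{0,0}^{s_0\tau}}$, where $f_{0,0}$ is the singular anti-dominant symbol fixed by $\langle s_1\rangle$ and the superscript $0$ denotes $s_0\in A_1$. The one preliminary observation needed is that $-1$ acts on $X=\Z\delta\oplus X_2$ as the longest element $s_0\wl$ of $W=A_1\times W_2$: indeed $s_0$ negates $\delta$, while $\wl$ acts as $-1$ on the $G_2$ weight lattice $X_2$. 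Hence on symbols $-f^{\sigma}_{0,n}=f^{s_0\wl\sigma}_{0,n}$, and since $f_{0,0}$ is $\langle s_1\rangle$-fixed and $s_0$ commutes with $\wl$ we obtain $-f_{0,0}^{s_0\tau}=f_{0,0}^{\wl\tau}$ for $\tau\in W_2$. Consequently $\TT{-f_{0,0}^{s_0\tau}}=\TT{\red{0}}^{\wl\tau}$, and \eqref{tiltingD} turns its Verma flag, which is read off from Theorem~\ref{thm:k=0:sigma}, into that of $\PP{\red{0}}^{0\tau}$ upon replacing each summand $\M{-\mu}$ by $\M{\mu}$.

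I would then check that $\sigma:=\wl\tau$ satisfies the hypothesis of Theorem~\ref{thm:k=0:sigma}. Using $\ell(\wl x)=\ell(\wl)-\ell(x)=6-\ell(x)$ for $x\in W_2$, the assumption $\ell(\tau)<\ell(\tau1)$ gives $\ell(\sigma1)=\ell(\wl(\tau1))=5-\ell(\tau)<6-\ell(\tau)=\ell(\sigma)$, so $\ell(\sigma)>\ell(\sigma1)$ as required; moreover $\sigma=\wl$ exactly when $\tau=e$. For $\tau\neq e$ I invoke the first formula of Theorem~\ref{thm:k=0:sigma}, and for $\tau=e$ the formula \eqref{k=0:0wl}. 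It then remains to re-index the resulting sets. Since $x\mapsto\wl x$ is an order-reversing bijection of the Bruhat poset $(W_2,\le)$, one has $\wl[e,\wl\tau]=[\tau,\wl]$, and because left multiplication commutes with the projection $W_2\to W_2/\langle s_i\rangle$ this descends to $\wl\bigl([e,\wl\tau]/\langle s_i\rangle\bigr)=[\tau,\wl]/\langle s_i\rangle$. Negation (prepending $s_0$) then carries the $W_2$-superscript $\kappa$ of a Verma module $\M{-\mu}$ to the superscript $s_0\wl\kappa$ of $\M{\mu}$, so the three layers $\M{\red{0}}^{[e,\sigma]/\langle s_1\rangle}$, $\M{\blue{1}}^{[e,\sigma]}$, $\M{2}^{[e,\sigma]}$ of $\TT{\red{0}}^{\wl\tau}$ become $\M{\red{0}}^{0[\tau,\wl]/\langle s_1\rangle}$, $\M{\blue{1}}^{0[\tau,\wl]}$, $\M{2}^{0[\tau,\wl]}$; and for $\tau=e$ the two layers $\M{\red{0}}^{[e,\wl]/\langle s_1\rangle}$, $\M{\blue{1}}^{[e,\wl]/\langle s_2\rangle}$ of $\TT{\red{0}}^{\wl}$ become $\M{\red{0}}^{0[e,\wl]/\langle s_1\rangle}$, $\M{\blue{1}}^{0[e,\wl]/\langle s_2\rangle}$, here also using that $[e,\wl]$ is $\wl$-stable. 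This is exactly the asserted pair of formulae.

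The equalities $\PP{\red{0}}^{0\tau}=\PP{\red{0}}^{0\tau1}$ and $\PP{\red{0}}^{0}=\PP{\red{0}}^{01}$ are immediate from the fact that $f_{0,0}$ is $\langle s_1\rangle$-fixed: $f_{0,0}^{s_0\tau}=f_{0,0}^{s_0\tau s_1}$, so the two symbols label one and the same projective module.

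There is no genuine obstacle in this argument: its entire content is supplied by Theorem~\ref{thm:k=0:sigma} together with Soergel duality. The only place requiring care is the combinatorial bookkeeping — keeping straight that the ``$-\la$'' in \eqref{tiltingD} is realized by $s_0\wl$, which is precisely what makes the output match the $0\tau$ superscript convention, and correctly tracking the effect of the order-reversing map $x\mapsto\wl x$ on Bruhat intervals and on the coset sets $W_2/\langle s_i\rangle$.
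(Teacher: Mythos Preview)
Your argument is correct and is precisely the approach the paper takes: the paper states only that the formulae follow from Theorem~\ref{thm:k=0:sigma} via the Soergel duality \eqref{tiltingD}, using the substitution $\tau=\wl\sigma$ and the identity $s_0\wl\la=-\la$. You have simply written out the routine combinatorial bookkeeping (the order-reversing effect of $x\mapsto\wl x$ on Bruhat intervals and its compatibility with the coset projections) that the paper leaves implicit.
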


Applying Theorem~\ref{thm:0:k=0:sigma} and Soergel duality \eqref{tiltingD}, we obtain the following character formulae for the projective modules $\PP{\red{0}}^{\tau}$, for $\tau\in W_2$, in the block $\Bl_0$.

\begin{thm} [Block $\Bl_0$]
   \label{thm:0:k=0:sigma:P}
The following formulae hold for projective modules in $\Bl_0$: for $\tau\in W_2$,
\begin{enumerate}
\item
Let $\tau=12121, 2121, 121$. Suppose $\tau =ij\tau'$ with $\ell(\tau)=\ell(\tau')+2$. Then
\begin{align*}
\PP{\red{0}}^{\tau 1} &= \PP{{\red{0}}}^{\tau}
 = \M{\red{0}}^{A_1 \times [\tau,\wl]/\langle s_1\rangle} + \M{\red{0}}^{0[\tau,\wl]\cup\{0\tau',0j\tau' \}} + \M{\blue{1}}^{0[\tau 1,\wl]}.
 \end{align*}
%Suppose that $\ell(i\tau)>\ell(\tau)$, for $i\in\{1,2\}$. Then
%\begin{align*}
%\PP{\red{0}}^{\tau 1} &= \PP{{\red{0}}}^{\tau}
% = \M{\red{0}}^{A_1 \times [\tau,\wl]/\langle s_1\rangle} + \M{\red{0}}^{0[\tau,\wl]\cup\{0i\tau 12,0\tau 12\}} + \M{\blue{1}}^{0[\tau 1,\wl]}.
% \end{align*}
%
% For $\sigma=e,1212,21212$
 Moreover we have
 \begin{align*}
\PP{\red{0}}^{21212} &= \PP{\red{0}}^{\wl} = \M{\red{0}}^{\{\wl,0\wl\}} + \M{\red{0}}^{\{02121, 012121\}} + \M{\blue{1}}^{\{021212, 0\wl\}},
\\
 \PP{\red{0}}^{2} &= \PP{\red{0}}^{21}
 = \M{\red{0}}^{A_1 \times [21,\wl]/\langle s_1\rangle} + \M{\red{0}}^{0[2121,\wl]/\langle s_1\rangle\cup\{01\}} + \M{\blue{1}}^{0[212,\wl]/\langle s_2\rangle},
 \\
\PP{\red{0}}^{e} &=\PP{\red{0}}^{1}=\M{\red{0}}^{A_1\times W_2/\langle s_1\rangle}.
 \end{align*}

%2
\item
$
\PP{\blue{1}}^{\tau 2}
= \PP{\blue{1}}^{\tau} =
\begin{cases}
\M{\blue{1}}^{A_1\times [\tau,\wl]/\langle s_2\rangle} +\M{\red{0}}^{A_1\times [\tau,\wl]},
& \text{ if }  \ell(\tau)<\ell(\tau 2), \; \tau \not=e,
\\
\M{\blue{1}}^{A_1\times [e,w_0]/\langle s_2\rangle} + \M{\red{0}}^{A_1\times [e,w_0]/\langle s_1\rangle},
& \text{ if } \tau=e.
\end{cases}
$
%3
\item
$\PP{2}^{\tau} =
\begin{cases}
%\red{open ???}, &   \text{ if }  \tau =\wl, \\
\M{2}^{A_1\times [\tau,\wl]} +\M{\blue{1}}^{A_1\times [\tau,\wl]} +\M{{\red{0}}}^{A_1\times {[\tau,\wl]/\langle s_1\rangle}},
&   \text{ if } \ell(\tau) >\ell(\tau 2), \; \tau\not=\wl,
\\
\M{2}^{A_1\times [\tau,\wl]} +\M{\blue{1}}^{A_1\times {[\tau,\wl]/\langle s_2\rangle}},
&  \text{ if } \ell(\tau) <\ell(\tau 2).
\end{cases}
$
\end{enumerate}
\end{thm}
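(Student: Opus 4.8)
The plan is to derive every identity in the statement from the corresponding identity in Theorem~\ref{thm:0:k=0:sigma} by a single application of the Soergel duality~\eqref{tiltingD}, exactly as Theorems~\ref{thm:k=1+sigma:P}, \ref{thm:0:k=2+sigma:P} and \ref{thm:0:k=1:sigma:P} were read off from their tilting counterparts. Recall that the longest element of $W=A_1\times W_2$ is $s_0\wl$ and that $s_0\wl\la=-\la$ for all $\la\in X$; hence for an atypical weight $-f_{0,n}^{\kappa}=f_{0,n}^{s_0\wl\kappa}$, and \eqref{tiltingD} becomes
\begin{align*}
\big(\PP{f_{0,n}^{\tau}}:\M{f_{0,m}^{\kappa}}\big)=\big(\TT{f_{0,n}^{s_0\wl\tau}}:\M{f_{0,m}^{s_0\wl\kappa}}\big).
\end{align*}
So I would substitute $\sigma\mapsto\wl\sigma$ in Theorem~\ref{thm:0:k=0:sigma} (so that $\tau=\wl\sigma$) and then transport every index set appearing on the right-hand side through left multiplication by $s_0\wl$.

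The transport rules rest on two elementary facts about $W_2$, the Weyl group of $G_2$. First, $\wl$ acts as $-1$ on $X_2$, hence is central in $W_2$; in particular it normalizes $\langle s_1\rangle$ and $\langle s_2\rangle$, and $s_0\wl\cdot s_0^{a}\kappa=s_0^{1-a}\wl\kappa$ for $\kappa\in W_2$, $a\in\{0,1\}$. Second, left multiplication by $\wl$ reverses the Bruhat order, so $\wl\cdot[e,\sigma]=[\wl\sigma,\wl]$, and $\wl$ carries the image of $[e,\sigma]$ in $W_2/\langle s_i\rangle$ onto that of $[\wl\sigma,\wl]$. Consequently a superscript running over $[e,\sigma]$ (resp.\ over $A_1\times[e,\sigma]$) becomes one running over $0[\wl\sigma,\wl]$ (resp.\ over $A_1\times[\wl\sigma,\wl]$), while one running over $0[e,\sigma]$ becomes one running over $[\wl\sigma,\wl]$. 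Since $\ell(\sigma)\lessgtr\ell(\sigma s_i)\Leftrightarrow\ell(\wl\sigma)\gtrless\ell(\wl\sigma s_i)$, the case-distinction inequalities flip under $\sigma\mapsto\wl\sigma$, and $\sigma=e\Leftrightarrow\wl\sigma=\wl$, so the exceptional values of $\sigma$ are sent to the exceptional values of $\tau$. Feeding these rules into Parts~(2) and (3) of Theorem~\ref{thm:0:k=0:sigma} should reproduce Parts~(2) and (3) here termwise; the equalities $\PP{\blue 1}^{\tau 2}=\PP{\blue 1}^{\tau}$ and $\PP{2}^{\tau 1}=\PP{2}^{\tau}$ are automatic because $s_2$, resp.\ $s_1$, fixes $f_{\blue 1}$, resp.\ $f_{\red 0}$. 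I also note that the one undetermined tilting module $\TT{2}^{0}$ of Remark~\ref{rem:T20} is dual under \eqref{tiltingD} to $\PP{2}^{\wl}$, which is precisely the value of $\tau$ omitted from Part~(3); so the present statement leaves exactly the same gap.

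The hard part will be the bookkeeping in Part~(1): one must check that $s_0\wl$ sends the ad hoc sets $[e,\sigma]\cup\{i\sigma12,\sigma12\}$, $[e,\sigma1]$ and their $\langle s_j\rangle$-quotients to $0[\tau,\wl]\cup\{0\tau',0j\tau'\}$, $0[\tau1,\wl]$, etc., under the correspondence $\sigma=2,12,212\leftrightarrow\tau=12121,2121,121$. For this I would first record the small-rank identities $\wl s_2=12121$, $\wl\cdot12=2121$, $\wl\cdot212=121$, $\wl\cdot21=1212$ (each forced by centrality of $\wl$ together with a length count), and then verify the generic formula and the ``Moreover'' specializations at $\tau=e,1,2,21,21212,\wl$ one by one. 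I expect no conceptual difficulty here — only the organizational task of tracking the nonstandard coset-representative conventions — and no module-theoretic input beyond \eqref{tiltingD} and Theorem~\ref{thm:0:k=0:sigma} should be needed.
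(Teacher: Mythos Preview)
Your proposal is correct and follows exactly the paper's approach: the paper simply states that the theorem is obtained by applying Soergel duality \eqref{tiltingD} to Theorem~\ref{thm:0:k=0:sigma}, and your writeup spells out the bookkeeping of the substitution $\tau=\wl\sigma$ in more detail than the paper does. One minor slip: you write ``$\PP{2}^{\tau 1}=\PP{2}^{\tau}$'' where you presumably mean ``$\PP{\red 0}^{\tau 1}=\PP{\red 0}^{\tau}$'' (the equality in Part~(1)), but this does not affect the argument.
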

Together with Theorems~\ref{thm:k=1+sigma:P} and \ref{thm:0:k=2+sigma:P} (for $k=0$), Theorems~\ref{thm:k=0:sigma:P} and \ref{thm:0:k=0:sigma:P} provide character formulae for all projective covers (except $\PP{2}^{\wl}$) in the block $\Bl_0$.
%
%\begin{rem}
%  \label{rem:P20}
%Note $\PP{2}^{\wl}$ is the only projective module whose character remains undetermined (compare Remark~\ref{rem:T20}, where the tilting character $\TT{2}^0$ remains open).
%\end{rem}

%%%%
\subsection{Projective injective modules in $\OO$}

Clearly an indecomposable module in $\OO$ is projective and tilting if and only if it is projective and injective.  The projective tilting modules in typical blocks of $\OO$ are well understood thanks to Gorelik's equivalence (see Proposition~\ref{prop:typ-block}), and they are exactly the tilting modules of typical dominant integral weights.

It remains to  classify the projective tilting modules in atypical blocks $\Bl_k$, for $k\in \N$.

\begin{thm}
The set $\{\TT{n}^{0 \wl} \mid n\in \N \}$ forms a complete list of projective tilting modules in $\Bl_k$, for each $k\in \N$. More precisely, we have
\[
\TT{n}^{0 \wl} \cong \PP{n}^e, \qquad \text{ for } n \in \N.
\]
\end{thm}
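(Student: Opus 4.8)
The plan is to first establish the isomorphism $\TT{n}^{0\wl}\cong\PP{n}^e$, and then deduce the classification. A preliminary remark: since $s_0\wl$ acts on $X+\rho$ by $\la\mapsto-\la$, the weight $f_n^{0\wl}=-f_n$ is the dominant weight in the $W$-orbit of the anti-dominant weight $f_n$, and $\TT{n}^{0\wl}$ is the tilting module whose unique maximal weight is $f_n^{0\wl}$.

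The key point, which I expect to be the main obstacle, is that \emph{$\TT{n}^{0\wl}$ is projective}. For all but finitely many $n$, Section~\ref{sec:charT0} realizes $\TT{n}^{0\wl}=\E(\TT{g})$ with the standard initial weight $g=f_n^{0\wl}-2\delta=-f_n-2\delta$. First I would check (by the atypicality criterion used in the proof of Theorem~\ref{thm:0:k=2+sigma}, together with a short computation using \eqref{symbol}) that $g$ is \emph{typical} and is the dominant weight of its block. By Gorelik's equivalence (Proposition~\ref{prop:typ-block}) that typical block is equivalent, as a highest weight category, to a block of category $\OO$ for $\g_\oo$; the equivalence carries the poset-maximal weight to the dominant weight, and the tilting module of dominant highest weight in a block of the classical category $\OO$ is the big projective, hence projective. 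Thus $\TT{g}$ is projective, and since the translation functor $\E$ (tensoring with the adjoint module and projecting onto $\Bl_k$) sends projectives to projectives, $\TT{n}^{0\wl}=\E(\TT{g})$ is projective as well. The finitely many exceptional weights -- $n\in\{0,k,k+1,3k+1,3k+2,3k+5\}$ and the special cases in $\Bl_0$ and $\Bl_1$ -- would be handled in the same spirit from the non-standard initial tilting modules used in Sections~\ref{sec:charT} and~\ref{sec:charT0}: most of those again have dominant highest weight in a typical block, and the few remaining ones can be checked directly to be projective; propagating projectivity through the relevant translations finishes this step.

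Granting that $\TT{n}^{0\wl}$ is projective, it is an indecomposable projective module, hence isomorphic to $\PP{h}$ for a unique $h\in\Wt_k$, where $L_h$ is its head. From the explicit Verma flags in Theorems~\ref{thm:0:k=2+sigma}--\ref{thm:0:k=0:sigma}, the minimal weight occurring in the Verma flag of $\TT{n}^{0\wl}$ is the anti-dominant weight $f_n$, with multiplicity one; hence $\TT{n}^{0\wl}\twoheadrightarrow M_{f_n}\twoheadrightarrow L_{f_n}$, which forces $h=f_n$. Therefore $\TT{n}^{0\wl}\cong\PP{f_n}=\PP{n}^e$; in particular the Verma flags in Theorems~\ref{thm:k=1+sigma} and~\ref{thm:0:k=2+sigma:P} (and their low-$k$ analogues) coincide, as they must. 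Finally, a projective tilting module $M$ is automatically injective, since its contravariant dual $M^\vee$ is injective and $M^\vee\cong M$ because $M$ is tilting; so the $\TT{n}^{0\wl}$ are projective-injective modules.

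It remains to see that these exhaust the projective tilting modules in $\Bl_k$. Let $\TT{f}$ be any projective tilting module in $\Bl_k$: as an indecomposable projective it is isomorphic to $\PP{h}$ for some $h\in\Wt_k$, and as an indecomposable tilting module it is determined by its maximal weight $f$; moreover an indecomposable projective (resp.\ tilting) module is determined up to isomorphism by its character. Hence it suffices to compare the Verma flags of all tilting modules (Theorems~\ref{thm:k=1+sigma}--\ref{thm:0:k=0:sigma}) with those of all projectives (Theorems~\ref{thm:k=1+sigma:P}--\ref{thm:0:k=0:sigma:P}) and record the coincidences. This is a bookkeeping exercise made transparent by the uniform shape of the formulas: each ``layer'' of a tilting module's Verma flag is a Bruhat interval $[e,\sigma]$ (possibly times $A_1$), while each layer of a projective's Verma flag is an interval $[\tau,\wl]$ (possibly times $A_1$); in $W_2$ one has $[e,\sigma]=[\tau,\wl]$ only when $\sigma=\wl$ and $\tau=e$, and the presence or absence of the $A_1$-factor is pinned down by tracking the $\delta$-coordinate of the weights. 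One finds that the only coincidences are $\TT{n}^{0\wl}\cong\PP{n}^e$, which gives the asserted complete list.
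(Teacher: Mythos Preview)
Your overall strategy coincides with the paper's: prove projectivity of $\TT{n}^{0\wl}$ by realizing it as (a summand of) $\E(\TT{g})$ with $\TT{g}$ projective in a typical block via Gorelik's equivalence, then identify it as $\PP{n}^e$, and finally rule out other coincidences by comparing the Bruhat-interval shapes of tilting versus projective Verma flags. Your identification of the head via the minimal weight $f_n$ in the Verma flag is a clean alternative to the paper's explicit character comparison.

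There is, however, a genuine gap in your handling of the exceptional cases. The correct exceptional set for the projectivity argument is not $\{0,k,k+1,3k+1,3k+2,3k+5\}$: for $n=k,k+1,3k+1,3k+2$ (with $n\ge 2$) the \emph{standard} initial $g=f_n^{0\wl}-2\delta$ is still typical and dominant, so no special treatment is needed. The true exceptions are $n=0,1$ (where $g$ fails to be dominant, as its $\delta$-coordinate becomes $-3/2$ or $-1/2$) and $n=3k+5$ (where $g$ is atypical); you omit $n=1$ entirely. More seriously, your plan to fall back on the nonstandard initials from Sections~\ref{sec:charT}--\ref{sec:charT0} does not work for $n=0,1$: those initials were chosen to make $\E(\TT{g})$ indecomposable, not to be typical dominant, and for instance the initial $f_0^{0\wl}-(\delta-\ep_3)$ used for $\TT{0}^{0\wl}$ has negative $\delta$-coordinate and is not dominant, hence not known to be projective. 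The paper remedies this by choosing \emph{different} typical dominant initials for $n=0,1$ (namely $f_0^{\wl}+\delta+\ep_3$ and $f_1^{0\wl}-\delta+\ep_3$, or $f_1^{0\wl}-(\ep_2-\ep_3)$ when $k=0$) and merely checking that $M_n^{0\wl}$ is the top term of the resulting Verma flag, so that $\TT{n}^{0\wl}$ occurs as a summand. For $n=3k+5$ the paper bootstraps: the standard initial $g=f_{3k+5}^{0\wl}-2\delta$ is atypical, but it is itself of the form $f_m^{0\wl'}$ in another atypical block with $g-2\delta$ typical dominant, so $\TT{g}$ is already known to be projective by the generic case, and one translates once more.
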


\begin{proof}
We make the following simple observation from the tilting module character formulae in Theorems~\ref{thm:k=1+sigma}, \ref{thm:k=0:sigma}, \ref{thm:0:k=2+sigma},  \ref{thm:0:k=1:sigma}, \ref{thm:0:k=0:sigma} (and their counterparts for characters of projective modules in Theorems~\ref{thm:k=1+sigma:P},  \ref{thm:0:k=2+sigma:P},  \ref{thm:0:k=1:sigma:P}, \ref{thm:k=0:sigma:P}, \ref{thm:0:k=0:sigma:P}):

{\it Any tilting module in $\OO$ has a lowest Verma flag of the form $\M{n}^e$  while any projective module in $\OO$  has a highest Verma flag of the form $\M{n}^{0\wl}$.
}

Hence the only possible isomorphisms between a tilting module and a projective module are of the form $\TT{n}^{0\wl} \cong \PP{m}^{e}$, for some $n,m\in \N$; another quick inspection of the Verma flag formulae for tilting modules and projective modules shows that such identities hold on the character level if and only if $n=m$.

It remains to show that $\TT{n}^{0 \wl} \cong \PP{n}^e$, for all $n$ in each block $\Bl_k$, which is equivalent to showing that $\TT{n}^{0 \wl}$ is projective.

We first consider the ``generic" cases of $\TT{n}^{0 \wl}$, for $n\ge 2$ and $n\neq 3k+5$, in the block $\Bl_k$ for any $k$.  The standard initial tilting module $\TT{f_n^{0\wl}-2\delta}$ to which we apply a translation functor $\E$ to obtain $\TT{n}^{0\wl}$ is typical dominant, and thus it is projective by Proposition~\ref{prop:typ-block}. Since $\E$ is an exact functor, the resulting module $\TT{n}^{0 \wl} =\E \TT{f_n^{0\wl}-2\delta}$ must be projective too, and hence must be isomorphic to $\PP{n}^{e}$ for character reason (by a case-by-case inspection).

For $\TT{3k+5}^{0 \wl}$, we apply a translation functor $\E$ to the standard initial tilting module $\TT{f_{3k+5}^{0 \wl}-2\delta}$. While ${f_{3k+5}^{0 \wl}-2\delta}$ is atypical, ${f_{3k+5}^{0 \wl}-4\delta}$ is typical and dominant and hence $\TT{f_{3k+5}^{0 \wl}-2\delta} \cong \PP{f_{3k+5}^{e}+2\delta}$ by the generic case above. Hence as a summand of a direct sum of projective modules $\E \TT{f_{3k+5}^{0 \wl}-2\delta}$, the tilting module $\TT{3k+5}^{0 \wl}$ must be projective , and hence must be isomorphic to $\PP{3k+5}^{e}$ for character reason.

Now we consider the case of $n=1$. We first examine the blocks $\Bl_k$, for $k\ge 1$. In this case we apply a translation functor to the tilting module $T_{f_1^{0\wl}-\delta+\ep_3}$, which is a typical projective tilting module. We obtain a Verma flag for a possibly direct sum of tilting modules such that the highest term in this Verma flag is $M^{0\wl}_1$. This implies that $T^{0\wl}_1$ is a summand of a projective module and hence is projective in the blocks  $\Bl_k$, for $k\ge 1$. For block $\Bl_0$ we apply a translation functor to the tilting module $T_{f_1^{0\wl}-\ep_2+\ep_3}$, which is a typical projective tilting module (note ${f_1^{0\wl}-\ep_2+\ep_3}=[3/2|0,0,0]$). Again, we verify that the highest term in the resulting Verma flag is indeed $M^{0\wl}_1$ (with multiplicity). This then implies that $T^{0\wl}_1$ in the block $\Bl_0$ is projective.

Finally, we consider the case of $n=0$ and $k\ge 0$. We consider the tilting module of highest weight $f_0^{\wl}+\delta+\ep_3$, which is typical and projective. We apply a translation functor to this module and again verify that the highest term in the resulting Verma flag is indeed $M^{0\wl}_0$ (with multiplicity). This then implies that $T^{0\wl}_0$ is projective.

The theorem is proved.
\end{proof}

%%%%%%%
%%%%%%%
\section{Jordan-H\"older multiplicities of Verma modules in $\OO$}
  \label{sec:comp}

In this section, we completely describe the Jordan-H\"older multiplicities of Verma modules in  blocks $\Bl_k$, for $k\ge 1$.

\subsection{Jordan-H\"older multiplicities for Verma modules in $\Bl_k$}

For any subset $D \subseteq W_2$, we introduce a shorthand notation
\[
\LL{n}^D =\sum_{\tau \in D} \LL{n}^\tau.
\]
We shall write the composition multiplicity formula for a Verma module as a sum of $\LL{g}$'s.
Via the BGG reciprocity, we convert the formulae for Verma flags of projective modules in Theorems~\ref{thm:k=1+sigma:P} and  \ref{thm:0:k=2+sigma:P} to the following formulae for composition multiplicities.

\begin{thm}
  \label{thm:k=1+sigma:C}
The following formulae hold for Jordan-H\"older multiplicities of Verma modules in the block $\Bl_k$, for $k\ge 1$ (except in Case~(1) where $k\ge 2$): for $\sigma \in W_2$,
\begin{enumerate}
%1
\item
$\M{0}^{\sigma} =
\LL{0}^{[e,\sigma]}  +\LL{1}^{[e,\sigma]},
  \quad (k\ge 2).
$
%2
\item
$\M{k-1}^{\sigma} =
\LL{k-1}^{[e,\sigma]} +\LL{\red{k}}^{[e,\sigma]/\langle s_1\rangle} +\sum\limits_{\stackrel{\ell(\tau) >\ell(\tau 1)}{\tau\le \sigma}} \LL{k+1}^\tau.
$
%3
\item
$\M{\red{k}}^{\sigma 1} =\M{\red{k}}^{\sigma}
=  \LL{\red{k}}^{[e,\sigma]/\langle s_1\rangle} +  \LL{k+1}^{[e,\sigma]}+ \sum\limits_{\ell(\sigma)-2\ge \ell(\tau) > \ell(\tau 1)} \LL{k+1}^{\tau}, \quad
\text{ if } \ell(\sigma)> \ell(\sigma 1).
$
%4
\item
$ \M{3k}^{\sigma} =
\LL{3k}^{[e,\sigma]} +\LL{\blue{3k+1}}^{[e,\sigma]/\langle s_2\rangle} +\sum\limits_{\stackrel{\ell(\tau) >\ell(\tau 2)}{\tau\le \sigma}}\LL{3k+2}^{\tau}.
$
%5
\item
$\M{\blue{3k+1}}^{\sigma 2} =\M{\blue{3k+1}}^{\sigma}
=  \LL{\blue{3k+1}}^{[e,\sigma]/\langle s_2\rangle} +  \LL{3k+2}^{[e,\sigma]} + \sum\limits_{\ell(\sigma)-2\ge \ell(\tau) > \ell(\tau 2)}  \LL{3k+2}^{\tau}, \;
\text{ if } \ell(\sigma) > \ell(\sigma 2).
$
%6
\item
$\M{n}^{\sigma} =  \LL{n}^{[e,\sigma]}  +\LL{n+1}^{[e,\sigma]},\quad \forall n \in \N \backslash \{0, k-1, k, 3k, 3k+1\}.
$
\end{enumerate}
\end{thm}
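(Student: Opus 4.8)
The plan is to obtain Theorem~\ref{thm:k=1+sigma:C} by ``transposing'' the projective character formulas of the previous section through BGG reciprocity. The starting point is the identity $[\M{\mu}:\LL{\la}]=(\PP{\la}:\M{\mu})$ of \eqref{tiltingD}: for a fixed Verma module $\M{n}^\sigma$ with $\sigma\in W_2$, its Jordan--H\"older series is
\[
\M{n}^\sigma=\sum_g (\PP{g}:\M{n}^\sigma)\,\LL{g},
\]
summed over all highest weights $g$ of $\Bl_k$. So the task reduces to scanning the projective character formulas of Theorems~\ref{thm:k=1+sigma:P} and \ref{thm:0:k=2+sigma:P} (and, for the block $\Bl_1$ only, Theorem~\ref{thm:0:k=1:sigma:P}) and recording every appearance of $\M{n}^\sigma$, organized by the highest weight $g$.

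Two reductions make this finite. First, every superscript in the Verma flags of Theorem~\ref{thm:k=1+sigma:P} carries a factor $s_0$, whereas $f_{k,n}^{s_0 w}$ and $f_{k,n}^{w}$ have $\delta$-coordinates of opposite sign for every $w\in W_2$ (each $f_{k,n}$ having nonzero $\delta$-coordinate); hence the projectives $\PP{m}^{0\tau}$ contribute nothing to $\M{n}^\sigma$ with $\sigma\in W_2$, and only the projectives $\PP{m}^\tau$ of Theorem~\ref{thm:0:k=2+sigma:P}, $\tau\in W_2$, are relevant (together with $\PP{0}^\tau$ in $\Bl_1$). Second, the Verma flag of each such $\PP{m}^\tau$ is supported on a bounded window of lower indices near $m$ --- two consecutive indices generically, at most three near the singular indices $k$ and $3k+1$; consequently for fixed $n$ only the few projectives $\PP{m}^\tau$ with $m$ within distance two of $n$ can contain $\M{n}^\bullet$, and these are read off directly from the six cases of Theorem~\ref{thm:0:k=2+sigma:P}.

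I would then go case by case. When $n$ is generic (Cases (1) and (6)) only multiplicity-free two-layer formulas occur, and the coefficient of $\M{n}^\sigma$ in $\PP{n}^\tau$ and in $\PP{n+1}^\tau$ is $1$ precisely for $\tau\le\sigma$, giving $\LL{n}^{[e,\sigma]}+\LL{n+1}^{[e,\sigma]}$. When $n$ is adjacent to the singular index $k$ (Cases (2) and (3)) the contributing projectives are $\PP{k-1}^\tau$, $\PP{\red k}^\tau$ and $\PP{k+1}^\tau$: the term $\LL{\red k}^{[e,\sigma]/\langle s_1\rangle}$ comes from the $\M{\red k}$-layer of $\PP{\red k}^\tau$ (Theorem~\ref{thm:0:k=2+sigma:P}(3)), and the extra $\LL{k+1}^\tau$-terms come from the three-layer branch of $\PP{k+1}^\tau$ (Theorem~\ref{thm:0:k=2+sigma:P}(5)): its lowest layer $\M{k-1}^{A_1\times[\tau,\wl]}$ is present exactly when $\ell(\tau)>\ell(\tau 1)$, giving the extra term in Case~(2), and its middle layer $\M{\red k}^{A_1\times[\tau,\wl]}$ --- \emph{not} collapsed modulo $\langle s_1\rangle$ --- contains $\M{\red k}^\sigma$ with multiplicity $\#\big(\{\sigma,\sigma s_1\}\cap[\tau,\wl]\big)$, which equals $2$ exactly when $\ell(\sigma)-2\ge\ell(\tau)>\ell(\tau 1)$, giving the extra term in Case~(3). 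Cases (4) and (5) are obtained from (2) and (3) by replacing the singular index $k$ by $3k+1$ and $s_1$ by $s_2$.

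The heart of the argument, and the main obstacle, is this singular bookkeeping: one must be careful about which coset representative of a singular weight is used in the statement (Cases (3) and (5) are phrased for $\sigma$ with $\ell(\sigma)>\ell(\sigma 1)$, resp.\ $\ell(\sigma)>\ell(\sigma 2)$), about when two different superscripts index the same Verma or simple module, and about exactly when $\#(\{\sigma,\sigma s_1\}\cap[\tau,\wl])$ jumps from $1$ to $2$. This is controlled entirely by the combinatorics of the dihedral group $W_2=I_2(6)$: in $W_2$ one has $u<v$ iff $\ell(u)<\ell(v)$, each element of length between $1$ and $5$ has a unique reduced word, and $\mathrm{Stab}_W(f_{k,k})=\langle s_1\rangle$, $\mathrm{Stab}_W(f_{k,3k+1})=\langle s_2\rangle$. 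A secondary point is the block $\Bl_1$ in Case~(2), where $n=0=k-1$ and $\PP{0}^\tau$ is governed by Theorem~\ref{thm:0:k=1:sigma:P} rather than Theorem~\ref{thm:0:k=2+sigma:P}(2); a direct check shows its Verma flag still yields exactly the stated formula. Everything else is routine coefficient extraction.
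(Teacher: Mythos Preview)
Your proposal is correct and follows exactly the paper's approach: the paper's proof consists of a single sentence stating that the formulae are obtained by converting the Verma flag formulae for projective modules in Theorems~\ref{thm:k=1+sigma:P} and~\ref{thm:0:k=2+sigma:P} via BGG reciprocity, and what you have written is precisely a careful unpacking of that conversion. Your two reductions (only $\PP{m}^\tau$ with $\tau\in W_2$ contribute; only finitely many $m$ near $n$ matter) and your dihedral bookkeeping for the singular multiplicities are the right way to make the computation explicit.
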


\begin{thm}
  \label{thm:0:k=2+sigma:C}
The following formulae hold for Jordan-H\"older multiplicities of Verma modules in the block $\Bl_k$ (where $k\ge 2$ in Cases (1)-(4) and $k\ge 1$ in Cases (5)-(8)): for $\sigma \in W_2$,
\begin{enumerate}
%1
\item
$ \M{0}^{0\sigma} =
\LL{0}^{A_1\times [e,\sigma]} +\LL{1}^{[e,\sigma]}. %, \quad (k\ge 2).
$
%2
\item
$ \M{k-1}^{0\sigma} =
\LL{k-1}^{A_1\times [e,\sigma]}  +\LL{k-2}^{0 [e,\sigma]} +\LL{\red{k}}^{[e,\sigma]/\langle s_1\rangle} +\sum\limits_{\stackrel{\ell(\tau)>\ell(\tau 1)}{\tau\le \sigma}} \LL{k+1}^{\tau}. %, \quad (k\ge 2).
$
%3
\item
$ \M{\red{k}}^{0\sigma 1} =\M{\red{k}}^{0\sigma}
=  \LL{\red{k}}^{A_1\times [e,\sigma]/\langle s_1\rangle} +\LL{k-1}^{0 [e,\sigma]}  + \sum\limits_{\ell(\sigma)-2\ge \ell(\tau) > \ell(\tau 1)} \LL{k-1}^{0\tau}
\\
{}\qquad\qquad\qquad\quad
+\LL{k+1}^{[e,\sigma]}+ \sum\limits_{\ell(\sigma)-2\ge \ell(\tau) > \ell(\tau 1)} \LL{k+1}^{\tau},
\quad \text{ if } \ell(\sigma)>\ell(\sigma 1). %, \quad  (k\ge 2).
$
%4
\item
$ \M{k+1}^{0\sigma} =
\LL{k+1}^{A_1\times [e,\sigma]} +\LL{\red{k}}^{0 [e,\sigma]/\langle s_1\rangle}  +\sum\limits_{\stackrel{\ell(\tau)>\ell(\tau 1)}{\tau\le \sigma}} \LL{k-1}^{0\tau} +\LL{{k+2}}^{[e,\sigma]}. %, \quad (k\ge 2).
$
%5
\item
$ \M{3k}^{0\sigma} =
\LL{3k}^{A_1\times [e,\sigma]}  +\LL{3k-1}^{0 [e,\sigma]} +\LL{\blue{3k+1}}^{ [e,\sigma]/\langle s_2\rangle}  +\sum\limits_{\stackrel{\ell(\tau)>\ell(\tau 2)}{\tau\le \sigma}} \LL{3k+2}^{\tau}. %, \quad (k\ge 1).
$
%6
\item
$ \M{\blue{3k+1}}^{0\sigma 2} =\M{\blue{3k+1}}^{0\sigma}
=  \LL{\blue{3k+1}}^{A_1\times [e,\sigma]/\langle s_2\rangle} +\LL{3k}^{0[e,\sigma]} + \sum\limits_{\ell(\sigma)-2\ge \ell(\tau) > \ell(\tau 2)} \LL{3k}^{0\tau}
\\
{}\qquad\qquad\qquad\quad\quad
+\LL{3k+2}^{[e,\sigma]} + \sum\limits_{\ell(\sigma)-2\ge \ell(\tau) > \ell(\tau 2)} \LL{3k+2}^{\tau}$,
\quad if $\ell(\sigma) >\ell(\sigma 2). %, \quad (k\ge 1).
$
%7
\item
$ \M{3k+2}^{0\sigma} =
\LL{3k+2}^{A_1\times [e,\sigma]} +\LL{\blue{3k+1}}^{0[e,\sigma]/\langle s_2\rangle} +\sum\limits_{\stackrel{\ell(\tau)>\ell(\tau 2)}{\tau\le \sigma}} \LL{3k}^{0\tau} +\LL{3k+3}^{ [e,\sigma]}. %, \quad (k\ge 1).
$
%8
  \item
$\M{n}^{0\sigma} =  \LL{n}^{A_1\times [e,\sigma]}  +\LL{n-1}^{0 [e,\sigma]}  +\LL{n+1}^{[e,\sigma]},
\quad \forall n \in \N \backslash \{0, k-1, \red{k}, k+1, 3k, \blue{3k+1}, 3k+2\}. %, \; (k\ge 1).
$
\end{enumerate}
\end{thm}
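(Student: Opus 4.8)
The plan is to invert, via the BGG reciprocity and Soergel duality \eqref{tiltingD}, the Verma-flag formulae for projective modules already established. Recall that \eqref{tiltingD} gives $[\M{\mu}:\LL{\nu}]=(\PP{\nu}:\M{\mu})$ for all weights $\mu,\nu$. Theorems~\ref{thm:k=1+sigma:P} and~\ref{thm:0:k=2+sigma:P} together list the Verma flags of every indecomposable projective in $\Bl_k$ --- namely the modules $\PP{n}^{\tau}$ and $\PP{n}^{0\tau}$ for $\tau\in W_2$, $n\in\N$, with the singular identifications $\PP{\red{k}}^{\tau 1}=\PP{\red{k}}^{\tau}$, $\PP{\blue{3k+1}}^{\tau 2}=\PP{\blue{3k+1}}^{\tau}$ and their $s_0$-twists. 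Hence for a fixed Verma module $\M{n}^{0\sigma}$ occurring in the statement, the coefficient of $\LL{\nu}$ in its Jordan--H\"older expansion equals the multiplicity of $\M{n}^{0\sigma}$ in the Verma flag of $\PP{\nu}$; this is read off by scanning the finite list above.

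The scan is governed by the Bruhat-graph identities in $W_2$: $\kappa\in[\tau,\wl]$ iff $\tau\le\kappa$; $\kappa\in[\tau,\wl]/\langle s_i\rangle$ iff the minimal-length representative of $\kappa\langle s_i\rangle$ lies in $[\tau,\wl]$; and $\kappa\in[\tau,\wl]s_i$ iff $\kappa s_i\in[\tau,\wl]$; the order-reversing twist $\tau\mapsto\wl\tau$ has already been absorbed in passing from Theorems~\ref{thm:k=1+sigma},~\ref{thm:0:k=2+sigma} to Theorems~\ref{thm:k=1+sigma:P},~\ref{thm:0:k=2+sigma:P}. First I would treat the generic cases~(1) and~(8), where $n$ avoids the finitely many distinguished values $0,k-1,k,k+1,3k,3k+1,3k+2$: then only the two-layer formulae Theorem~\ref{thm:0:k=2+sigma:P}(1) and Theorem~\ref{thm:k=1+sigma:P}(1) contribute. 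Since in that case the untwisted projectives $\PP{m}^{\tau}$ carry Verma layers at levels $m$ and $m-1$ while the twisted ones $\PP{m}^{0\tau}$ carry layers at levels $m$ and $m+1$, the module $\M{n}^{0\sigma}$ shows up (each time with the constraint $\tau\le\sigma$) precisely in $\PP{n}^{\tau}$, $\PP{n+1}^{\tau}$, $\PP{n}^{0\tau}$ and $\PP{n-1}^{0\tau}$, which yields exactly $\LL{n}^{A_1\times[e,\sigma]}+\LL{n-1}^{0[e,\sigma]}+\LL{n+1}^{[e,\sigma]}$ (and its truncation for~(1)).

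For the remaining cases~(2)--(7) one runs the same scan, now also harvesting contributions from the three-layer and singular projective formulae Theorems~\ref{thm:k=1+sigma:P}(2)--(5) and~\ref{thm:0:k=2+sigma:P}(2)--(6): an index $\tau$ contributes to the coefficient of some $\LL{\nu}$ exactly when the fixed $\M{n}^{0\sigma}$ coincides with the relevant third-layer (or singular second-layer) Verma module of $\PP{\nu}$, and matching highest weights forces the length inequality $\ell(\tau)>\ell(\tau i)$ that selects the three-layer branch of the projective formula. This accounts for the auxiliary sums $\sum_{\ell(\tau)>\ell(\tau i),\,\tau\le\sigma}$ and $\sum_{\ell(\sigma)-2\ge\ell(\tau)>\ell(\tau i)}$ appearing in the statement; in cases~(3) and~(6) the extra single-term summands $\LL{k-1}^{0\tau}$, $\LL{k+1}^{\tau}$ beyond the interval sums arise because at a singular weight several coset representatives collapse, so a projective whose second layer is written over $[\tau,\wl]/\langle s_i\rangle$ still receives $\M{n}^{0\sigma}$ once the coset $\sigma\langle s_i\rangle$ is hit. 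I would then merge $\LL{n}^{[e,\sigma]}$ with $\LL{n}^{0[e,\sigma]}$ into $\LL{n}^{A_1\times[e,\sigma]}$ wherever both occur, rewrite the singular terms via minimal coset representatives, and finally check the range restrictions ($k\ge2$ in (1)--(4), $k\ge1$ in (5)--(8)) --- paying particular attention to the smallest admissible $k$, where distinguished indices come close together and a boundary projective formula may contribute an extra Verma module.

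The proof carries no representation-theoretic content beyond \eqref{tiltingD}; the main obstacle is the combinatorial bookkeeping --- Bruhat intervals, cosets modulo $\langle s_1\rangle$ and $\langle s_2\rangle$, the $s_0$-twists, the length conditions --- together with counting multiplicities correctly when singular weights identify several coset representatives (the origin of the ``extra'' single-term summands in~(3) and~(6)). A clean way to carry it out is a coefficient-by-coefficient verification: for each case, each $n,\sigma$ and each candidate $\nu\in A_1\times W_2$, confirm that the coefficient of $\LL{\nu}$ claimed on the right equals $(\PP{\nu}:\M{n}^{0\sigma})$ as listed in Theorems~\ref{thm:k=1+sigma:P},~\ref{thm:0:k=2+sigma:P}; the explicit Bruhat graph of $W_2$ drawn in Section~\ref{sec:blocks} keeps this finite check under control.
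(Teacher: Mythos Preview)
Your proposal is correct and follows exactly the paper's approach: the paper states no proof beyond the single sentence ``Via the BGG reciprocity, we convert the formulae for Verma flags of projective modules in Theorems~\ref{thm:k=1+sigma:P} and~\ref{thm:0:k=2+sigma:P} to the following formulae for composition multiplicities,'' and your write-up simply spells out the bookkeeping that this conversion entails. Your detailed account of how the Bruhat-interval and coset conditions arise, and why the extra summands in (3) and (6) appear from the singular identifications, goes well beyond what the paper records but is entirely in the same spirit.
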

Theorems~\ref{thm:k=1+sigma:C} and \ref{thm:0:k=2+sigma:C} provide a complete description of composition multiplicities of all Verma modules in blocks $\Bl_k$, for $k\ge 2$.

\subsection{Jordan-H\"older multiplicities for Verma modules in $\Bl_1$}

Via the BGG reciprocity, we convert the formulae for Verma flags of projective modules in Theorems~\ref{thm:k=1+sigma:P}, \ref{thm:0:k=2+sigma:P} and \ref{thm:0:k=1:sigma:P} to the following formulae for composition multiplicities of Verma modules in $\Bl_1$. Together with Theorems~\ref{thm:k=1+sigma:C} and \ref{thm:0:k=2+sigma:C} (for $k=1$), this provides a complete description of composition multiplicities of all Verma modules in the block $\Bl_1$.

\begin{thm}
  \label{thm:0:k=1:C}
The following formulae hold for Jordan-H\"older multiplicities of Verma modules in the block $\Bl_1$: for $\sigma \in W_2$,
\begin{enumerate}
%1
\item
$\M{0}^{\sigma} =
\LL{0}^{[e,\sigma]}  +\LL{\red{1}}^{[e,\sigma]/\langle s_1\rangle}  + \sum\limits_{\stackrel{\ell(\tau)>\ell(\tau 1)}{\tau\le \sigma}} \LL{2}^{\tau}.
$
%2
\item
$ \M{0}^{0\sigma} =
\LL{0}^{A_1\times [e,\sigma]}   +\sum\limits_{\stackrel{\ell(\tau)>\ell(\tau 2)}{\tau \in [e,\sigma 2]}
%{\sigma \in [\tau,\wl]2}
} \LL{0}^{\tau} +\LL{\red{1}}^{[e,\sigma]/\langle s_1\rangle} +\sum\limits_{\stackrel{\ell(\tau)>\ell(\tau 1)}{\tau\le \sigma}} \LL{2}^{\tau}.
$
%3
\item
$ \M{\red{1}}^{0\sigma 1} =\M{\red{1}}^{0\sigma}
=  \LL{\red{1}}^{A_1\times [e,\sigma]/\langle s_1\rangle} +\LL{0}^{0 [e,\sigma]} + \sum\limits_{\ell(\sigma)-2\ge \ell(\tau) > \ell(\tau 1)} \LL{0}^{0\tau}
\\ {}\qquad\qquad\quad\quad
+\LL{2}^{[e,\sigma]}+ \sum\limits_{\ell(\sigma)-2\ge \ell(\tau) > \ell(\tau 1)} \LL{2}^{\tau}  +\sum\limits_{\stackrel{\ell(\tau)>\ell(\tau 2)}{\tau\le \sigma}} \LL{0}^{\tau},
\quad \text{ if } \ell(\sigma)>\ell(\sigma 1).
$
%4
\item
$ \M{2}^{0\sigma} =
\LL{2}^{A_1\times [e,\sigma]} +\LL{\red{1}}^{0 [e,\sigma]/\langle s_1\rangle}  +\sum\limits_{\stackrel{\ell(\tau)>\ell(\tau 1)}{\tau\le \sigma}} \LL{0}^{0\tau} +\LL{{3}}^{[e,\sigma]} +\delta_{\sigma,\wl} \LL{0}^{\wl}.
$
\end{enumerate}
\end{thm}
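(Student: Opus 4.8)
The plan is to invoke the BGG reciprocity and Soergel duality \eqref{tiltingD}, which identifies $[\M{\mu}:\LL{\la}]$ with the Verma flag multiplicity $(\PP{\la}:M_\mu)$; so every composition multiplicity of a Verma module in $\Bl_1$ is obtained by collecting, over all highest weights $\la$, the multiplicity of the relevant $M_\mu$ in the Verma flag of the projective cover $\PP{\la}$. The Verma flags of \emph{all} projective covers in $\Bl_1$ are supplied by Theorems~\ref{thm:k=1+sigma:P} and \ref{thm:0:k=2+sigma:P} (specialized to $k=1$) together with Theorem~\ref{thm:0:k=1:sigma:P}; it is precisely the last of these that carries ``extra'' Verma flags in $\PP{0}^{\tau}$ and $\PP{0}^{0\tau}$ not present in the generic block formulae, which is why Theorem~\ref{thm:0:k=1:C} must be treated separately from Theorem~\ref{thm:0:k=2+sigma:C} (indeed part~(1) will coincide with the $k=1$ specialization of Theorem~\ref{thm:k=1+sigma:C}(2), but parts~(2)--(4) will not reduce to the generic ones).

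Concretely, I would first record the Verma flags of the eight families of projectives $\PP{n}^{\tau}$ and $\PP{n}^{0\tau}$ for $n\in\{0,\red{1},2\}$ and for $n\ge 3$ (where the generic Part~(1)-type formula applies), assembling the relevant cases from the three cited theorems. Then, for each of the four Verma modules $\M{0}^{\sigma}$, $\M{0}^{0\sigma}$, $\M{\red{1}}^{0\sigma}$, $\M{2}^{0\sigma}$ occurring in the statement, I would scan these eight families and record, coset by coset, where and with what multiplicity that Verma module appears; summing the contributions over $\la$ produces its composition series. Each scan is finite and elementary: a fixed $M_\mu$ appears in only a handful of projective families, and within a family its multiplicity is governed by membership in a Bruhat interval $[e,\sigma]$ or $[\tau,\wl]$ (possibly modulo $\langle s_1\rangle$ or $\langle s_2\rangle$). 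The essential dictionary is that ``$M_{f_m^{\kappa}}$ occurs in $\PP{f_n^{\tau}}$ with multiplicity $1$ exactly when $\tau\le\kappa$'' transposes, under \eqref{tiltingD}, to a summand $\LL{n}^{[e,\kappa]}$ in the composition series of $\M{m}^{\kappa}$; repeated application of this, with the bookkeeping packaged via the shorthand $\LL{n}^{D}$, yields the stated formulae.

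The points requiring genuine care — and where the $\Bl_1$ formulae diverge from the generic ones of Theorem~\ref{thm:0:k=2+sigma:C} — all trace back to the extra Verma flags carried by $\PP{0}^{\tau}$ and $\PP{0}^{0\tau}$. First, the singular weights $f_{\red{1}}^{\kappa}$ have a $\langle s_1\rangle$-stabilizer, so the collection of contributions forces one to pair up coset representatives $\tau$ and $\tau 1$; this is the source of the range conditions $\ell(\sigma)-2\ge\ell(\tau)>\ell(\tau 1)$ in part~(3) and of potential multiplicity $2$. Second, the flag $\M{0}^{0[\tau,\wl]2}\subseteq\PP{0}^{\tau}$ contributes, via \eqref{tiltingD}, the summand $\sum_{\ell(\tau)>\ell(\tau 2),\,\tau\in[e,\sigma 2]}\LL{0}^{\tau}$ in part~(2), and together with $\M{\red{1}}^{0[\tau,\wl]/\langle s_1\rangle}\subseteq\PP{0}^{\tau}$ the extra term $\sum_{\ell(\tau)>\ell(\tau 2),\,\tau\le\sigma}\LL{0}^{\tau}$ in part~(3). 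Third, at the longest element $\wl$ the projective $\PP{0}^{\wl}$ carries the extra flag $\M{2}^{0\wl}$, which forces the correction $\delta_{\sigma,\wl}\LL{0}^{\wl}$ in part~(4). The main obstacle is exactly this last layer of bookkeeping: tracking the $\Bl_1$-specific flags and the $\wl$-anomaly through the duality consistently, without double-counting them against the contributions already recorded by the generic projective formulae, and confirming that the resulting multiplicities repackage cleanly into the $\LL{n}^{D}$ notation claimed in the theorem.
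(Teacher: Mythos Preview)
Your proposal is correct and follows exactly the approach the paper takes: the paper's own proof is simply the one-line assertion that the formulae are obtained via BGG reciprocity from the Verma flag formulae for projective modules in Theorems~\ref{thm:k=1+sigma:P}, \ref{thm:0:k=2+sigma:P}, and \ref{thm:0:k=1:sigma:P}. Your write-up is in fact considerably more detailed than what the paper provides, correctly identifying the $\Bl_1$-specific contributions (the extra flags in $\PP{0}^{\tau}$ and the $\wl$-anomaly) that distinguish these formulae from the generic ones.
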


\begin{rem}
For the block $\Bl_0$, we can write down the Jordan-H\"older multiplicity formulae of all Verma modules except $\M{3}^{0\wl}$ and several cases among $\M{n}^{0\sigma}$ ($n=0,1,2, \sigma \in W_2$). This is caused by the missing character for the tilting module $\TT{2}^{0}$; see Remark~\ref{rem:T20}.
\end{rem}

%%%%%%%%%%%%%%


\begin{thebibliography}{CFLW14}\frenchspacing

\bibitem[Bao17]{Bao17}
H. Bao, {\it  Kazhdan-Lusztig theory of super type $D$ and quantum symmetric pairs}, Represent. Theory {\bf 21} (2017), 247--276. %\href{http://arxiv.org/abs/1603.05105}{arXiv:1603.05105}.

\bibitem[BGS96]{BGS}
A. Beilinson, V. Ginzburg, and W. Soergel,
{\it Koszul duality patterns in representation theory}, J.~Amer.~Math.~Soc.~{\bf 9}
(1996), 473--527.

%\bibitem[Br03]{Br03} J.~Brundan,
%{\it Kazhdan-Lusztig polynomials and character formulae for the Lie superalgebra $\gl(m|n)$},
%J.~Amer.~Math.~Soc.~{\bf 16} (2003), 185--231.

\bibitem[Br04]{Br04} J. Brundan,
{\it Tilting modules for Lie superalgebras}, Commun.~Algebra {\bf 32} (2004), 2251--2268.

\bibitem[BLW17]{BLW17}
J. Brundan, I. Losev, and B. Webster,
{\it Tensor product categorifications and the super Kazhdan-Lusztig conjecture},
Int. Math. Res. Not.  {\bf 2017},  6329--6410.

\bibitem[BW13]{BW13}
H. Bao and W. Wang,
{\it A new approach to Kazhdan-Lusztig theory of type $B$ via quantum symmetric pairs}, Ast\'erisque (to appear),
\href{http://arxiv.org/abs/1310.0103}{arXiv:1310.0103}v2.

\bibitem[CFLW14]{CFLW} S. Clark, Z. Fan, Y. Li and W. Wang,
{\it  Quantum supergroups III. Twistors},
Commun. Math. Phys. {\bf 332} (2014), 415--436.
%arXiv:1307.7056.

\bibitem[CLW11]{CLW11} S.-J. Cheng, N.~Lam and W.~Wang,
{\it Super duality and irreducible characters of ortho-symplectic Lie superalgebras},
Invent. Math.~{\bf 183} (2011), 189--224.

\bibitem[CLW15]{CLW15} S.-J. Cheng, N.~Lam and W.~Wang,
{\it Brundan-Kazhdan-Lusztig conjecture for general linear Lie superalgebras}, Duke J. Math. {\bf 164} (2015), 617--695.
%arXiv:1203.0092.

\bibitem[CW12]{CW12} S.-J.~Cheng and W.~Wang,
{\it Dualities and Representations of Lie Superalgebras},
Graduate Studies in Mathematics {\bf 144}, Amer. Math. Soc.,   2012.

\bibitem[CW17]{CW17} S.-J.~Cheng and W.~Wang,
{\it Character formulae in category $\mathcal O$ for exceptional Lie superalgebras $D(2|1;\zeta)$}, Transform. Groups (to appear),
\href{https://arxiv.org/abs/1704.00846}{arXiv:1704.00846}v4.

\bibitem[FK76]{FK76}
P.~Freund and I. Kaplansky, {\it Simple supersymmetries}, J.~Mathematical Phys.~{\bf 17} (1976), 228--231.

\bibitem[Ger00]{Ger00} J.~Germoni,
{\it Indecomposable representations of ${\rm osp}(3,2)$, $D(2,1;\alpha)$ and $G(3)$}, Colloquium on Homology and Representation Theory (Vaquerias, 1998), Bol. Acad. Nac. Cienc. (Cordoba) {\bf 65} (2000), 147--163.

\bibitem[Gor02a]{Gor02a} M.~Gorelik, {\it Annihilation theorem and separation theorem for basic classical Lie superalgebras}, J.~Amer.~Math.~Soc.~{\bf 15} (2002), 113--165.

\bibitem[Gor02b]{Gor02b} M.~Gorelik, {\it Strongly typical representations of the basic classical Lie superalgebras}, J.~Amer.~Math.~Soc.~{\bf 15} (2002), 167--184.

\bibitem[Gor04]{Gor04} M.~Gorelik, {\it The Kac construction of the centre of $U(\mathfrak g)$ for Lie superalgebras}, J. Nonlinear Math. Phys. {\bf 11} (2004), 325--349.

%\bibitem[Kac77]{Kac77} V.~Kac, {\it Lie superalgebras}, Adv.~Math.~{\bf 16} (1977), 8--96.

%\bibitem[Kac78]{Kac78} V.~Kac,
%{\it Representations of classical Lie superalgebras}. Differential geometrical methods in mathematical physics, II (Proc. Conf., Univ. Bonn, Bonn, 1977), pp. 597--626, Lecture Notes in Math.~{\bf 676}, Springer, Berlin, 1978.

\bibitem[Ma14]{Ma14} L. Martirosyan,
{\it The representation theory of the exceptional Lie superalgebras $F(4)$ and $G(3)$}, J. Algebra {\bf 419} (2014), 167--222.

\bibitem[Mu12]{Mu12} I.~Musson,
{\it Lie Superalgebras and Enveloping Algebras}, Graduate Studies in Mathematics {\bf 131}, Amer. Math. Soc.,   2012.

\bibitem[Mu17]{Mu17} I.~Musson,
{\it Sapovalov elements and the Jantzen sum formula for contragredient Lie superalgebras}, \href{https://arxiv.org/abs/1710.10528}{arXiv:1710.10528}.

\bibitem[Sa17]{Sa17} T.~Sale,
{\it Homomorphisms between Verma modules of simple Lie superalgebras},
\href{https://arxiv.org/abs/1710.09966}{arXiv:1710.09966}v2.

\bibitem[Ser99]{Serg} A.~Sergeev, {\it The invariant polynomials of simple Lie superalgebras},  Represent.~Theory {\bf 3} (1999), 250--280.

%\bibitem[Soe98]{Soe98} W.~Soergel,
%{\it Character formulas for tilting modules over Kac-Moody algebras}, Represent. Theory  {\bf 2} (1998), 432--448.

\bibitem[SZ16]{SZ16} Y.~Su and R.B.~Zhang, {\it Generalised Jantzen filtration of exceptional Lie superalgebras}, Israel J.~Math.~{\bf 212} (2016), 635--676.

\end{thebibliography}
\end{document}